\newcommand{\dom}{\ensuremath{\mathcal{D}}} %domain
\newcommand{\dE}{\ensuremath{\mathbb{E}}} %esperance
\newcommand{\dR}{\ensuremath{\mathbb{R}}} %reels
\newcommand{\R}{\dR}
\newcommand{\var}{\mathbf{Var}} %variance
\newtheorem{theorem}{Theorem}
\newtheorem{proposition}[theorem]{Proposition}
\newtheorem{lemma}[theorem]{Lemma}
\newtheorem{corollary}[theorem]{Corollary}
\newtheorem{fact}[theorem]{Fact}
\newtheorem{setting}[theorem]{Setting}
\theoremstyle{definition}
\theoremstyle{remark} \newtheorem{remark}{Remark}
\def\id{\textrm{Id}}
\def\dmu{\, d\mu}
\newcommand{\poinc}[1]{\ensuremath{{\rm c_p}(#1)}}
\def\O{\mathcal O}
\def\fix{\mathrm{Fix}}
\begin{document}   %----------------------------------------------------
%%%---------------------------------------------------------------------
\title{Invariances in variance estimates}
\author{F. Barthe and D. Cordero\,-Erausquin}
\date{June 12, 2011}

\maketitle

\begin{abstract}
We provide variants and improvements of the Brascamp-Lieb variance inequality which take into 
account the invariance properties of the underlying measure. This is applied to spectral gap 
estimates for log-concave measures with many symmetries and to 
non-interacting conservative spin systems.
\end{abstract}

\section{Introduction}
Poincar\'e type inequalities, which provide upper estimates of variances of functions by energy terms involving quadratic forms in their 
gradients, are versatile tools of mathematical analysis. They allow for example to quantify the concentration of measure or the ergodic behaviour
of evolution processes. 
In the recent years, it has become clear that they %variance and Poincar\'e type inequalities
 provide crucial information on the distribution of mass and on the central limit theorem for convex bodies (see e.g.~\cite{ABP, KLS, Bobkov:1999vk, Guedon:2011wl}). 
Recall that for a Borel probability measure $\nu$ on a Euclidean space $(E, |\cdot |)$, its Poincar\'e constant $\poinc{\nu} \in (0,+\infty]$ is the best constant  for which we have
\begin{equation}\label{defpoinc}
\var_{\nu}(g) \le \poinc{\nu} \int |\nabla g |^2 \, d\nu
\end{equation}
for every  $g\in L^2(\nu)$ locally Lipschitz, where the variance with respect to $\nu$ is defined by
$$\var_{\nu}(g) := \int \Big(g- \int g \, d\nu\Big)^2 \, d\nu.$$
For a random vector $Y\in E$, if $\nu$ is the law of $Y$ (a relation denoted by $Y\sim \nu$), we define  $\poinc{Y}:=\poinc{\nu}$. Let us also recall here that when $\nu$ is log-concave (see below), this Poincar\'e constant is finite, and we have, for instance, the following bound  proved by Kannan-Lovasz-Simonovits~\cite{KLS} and Bobkov~\cite{Bobkov:1999vk}
\begin{equation}
\label{KLS}
\poinc{\nu} \le 4\inf_{a\in E}\int_E |y-a|^2d\nu(y)
\end{equation}
Spectral estimates enter in the asymptotic geometry of convex bodies and log-concave measures mainly  when the measure $\nu$ is \emph{isotropic}. 
%Special emphasis will be given to the case of the function $g(x)=|x|^2$ because of its importance in the asymptotic theory of convex bodies, i
A probability measure $\nu$ (or a random vector $X\sim \nu$)  is isotropic if
$\int_E x \, d\nu(x) = \mathbb E X = 0$ and 
% \quad \textrm{and}\quad 
$\textrm{Cov}(\nu) := \int_E x\otimes x \, d\nu(x)  = \mathbb E X \otimes X  = \id_E$. The KLS conjecture claims that the Poincar\'e constant of isotropic log-concave distributions is universally bounded, i.e. $\sup\{\poinc{\mu}\; ; \ n\ge 1, \mu \textrm{ isotropic log-concave probability on $\R^n$}\} < +\infty$. A weaker conjecture known as the variance conjecture claims a similar result but when we use only the function $f(x)=|x|^2$; since $\mathbb E |\nabla f (X)|^2 = 2 n$ when $X$ is isotropic, this conjecture amounts to the bound 
\begin{equation}
\label{varconj}
\var(|X|^2)\le C \,n
\end{equation}
 for some universal constant $C>0$ and for every isotropic log-concave vector $X\in \R^n$. We shall detail a bit more on this later.

The present paper has been very much inspired by the  work of Klartag~\cite{KlartagPTRF}, who obtained an optimal variance estimate in the case of log-concave 
measures which are invariant under coordinate hyperplane reflections. His approach was based on a new twist in the $L^2$ method, introduced by H\"ormander.
Our goal is to extend these techniques to a more general setting (non log-concave measures, more general symmetries) that is adapted to applications
in other topics. One of our main results, Theorem~\ref{theo:invariance1}  appears as an improvement of the classical Brascamp-Lieb variance inequality,
where the log-concavity assumption may be relaxed in the presence of symmetries and spectral gaps for restrictions to certain subspaces.

\bigskip

The paper is organized as follows. In the next section, we recall the principle of  the $L^2$ method (in the presence of invariances) and present a streamlined version
of  Klartag's argument as an introduction to our further study. Then, in Section \S\ref{vargeneral} we establish an abstract Poincar\'e-type inequality for measures and functions having well balanced invariances.  Our results, which involve the spectral gaps of  conditioned measures orthogonal to fixed-points subspaces of isometries leaving the measure invariant,  go beyond the class of log-concave measures. In  Section~\S\ref{varnorm} we apply these results to estimate the variance of the norm of
 log-concave random vectors with many invariances (as a consequence, we confirm the variance conjecture for  log-concave measures having the invariances of the simplex).
  Section~\S\ref{sec:symmetrisation} shows how to extend the variance estimates of Section \S\ref{vargeneral} to general functions. It uses a symmetrization
  procedure that relies on spectral properties of the group of isometries of the underlying measure.
    Section~\S\ref{sec:spectralgap} is devoted to spectral gap estimates for log-concave probability measures, with, as before, special emphasis on the measure having several invariances by reflections and on the Schatten classes.
  Section~\S\ref{sec:spin} gives an application to the study of spectral gap of conditioned spin systems. The last Section~\S\ref{sec:isotropy} is mainly independent of the rest, as far as methods are concerned. It discusses the isotropy constant of convex bodies having well balanced invariances. In a final Appendix, we collect some useful observations concerning groups generated by reflections which provide examples satisfying our assumptions throughout the paper. 

\bigskip

We conclude this introduction with precisions about the setting of our study and the notation.
We shall be working with a Borel probability measure $\mu$ on $\R^n$ with density,  
$$d\mu(x) = e^{-\Phi(x)}\, dx.$$
When $\Phi$ is convex, we say that $\mu$ is log-concave.
In our proofs, we shall impose for  simplicity the condition~\eqref{defmu} that $\Phi$ is of class $C^2$ on $\R^n$. However, the inequalities we obtain are of course valid for a larger class of measure, by standard approximations that we leave to the reader. For instance,  conditions of the form  $D^2\Phi \ge \rho \id$  can  be interpreted in the sense of distributions. 
More importantly, we shall explain below why the results extend to the case $\mu$ has some convex support $K$ (not necessarily $\R^n$, thus) provided $\Phi$ is $C^2$ on $K$ (we take the value $+\infty$ outside $K$). For instance, all the inequalities proved in the  paper for log-concave measures of the form~\eqref{defmu} remain valid for general log-concave measures having a convex support. In particular, the results apply to $\mu_K$, the normalized Lebesgue measure restricted to a convex body of $K\subset \R^n$, i.e.
$$\mu_K(A) = \frac{|A\cap K |}{|K|}, \qquad \forall A\subset \R^n.$$

 Given an isometry of the standard Euclidean space $(\R^n, \cdot, |\cdot|)$, $R\in \mathcal O_n$ and a function $g$ on $\R^n$,
  we say that $g$ is $R$-invariant if $g\circ R = g$. A measure $\mu$ is $R$-invariant if its push-forward by $R$-invariant is $\mu$, or equivalently if its density is 
  $R$-invariant.  Accordingly, a random vector $X\in \R^n$ is $R$-invariant if $RX$ and $X$ have the same law. These notions extend in the obvious way to $G$-invariance where $G\subset\mathcal O_n$ is a group of isometries. It is worth noting that if a function $g$ (or a measure $\mu$) is invariant under $k$ isometries $R_1, \ldots,R_k \in \mathcal O_n$, then it is invariant under $\langle\{R_i\}_{i\le k}\rangle$, the group of isometries generated by these isometries. For a measure $\mu$ (resp. a random vector $X\in \R^n$), we denote by $\mathcal O_n(\mu)$  (resp. $\mathcal O_n(X)$) the group of all isometries leaving $\mu$ (resp. $X$) invariant. 
%Accordingly, for a random vector $X$, an isometry $R$ belongs to $O_n(X)$ if $RX$ and $X$ have the same law. 
Similarly, for a convex body $K\subset \R^n$, we denote $\mathcal O_n(K):=\{R\in \mathcal O_n \; ;\ RK=K\}$.
Recall that each isometry $R\in \mathcal O_n$ comes with a linear subspace of fixed points, 
$$\fix(R) := \{x \in \R^n \; ; \ Rx = x\}.$$
For a group $G$ of isometries, $\fix(G)$ denotes the intersection of $\fix(R)$ when $R$ runs over $G$.

We shall give special attention to the case of (orthogonal) hyperplane symmetries, which we refer to as~\emph{reflections}. A reflection is characterized by the fact that its fixed points  form an hyperplane $H=u^\perp$ with $u$ a unit vector, and we shall later use the following notation: 
\begin{equation}
\label{defSH}
S_H(x) := S_{u^\perp}(x) := x- 2(x\cdot u) u, \qquad \forall x \in \R^n .
\end{equation} 
For a measure $\mu$, we denote by $\mathcal R_n(\mu)\subset \mathcal O_n(\mu)$ the group generated by the reflections leaving $\mu$ invariant.

\section{Generalities on the $L^2$ method}
\label{sec:L2}

H\"ormander developed the $L^2$ method for solving the $\overline{\partial}$ equation, emphasizing the central role  played by the convexity, or rather plusubharmonicity, of the domain or of the potential $\phi$ (see~\cite{hormander}). 
%He  emphasized the central role  played by the convexity (or rather plusubharmonicity) of the domain and of the potential $\phi$.  Of %course, we are not interested here in solving a gradient equation, and what 
We will use only the  ``easy" part of the method, namely the \emph{a priori} spectral type inequalities (which are often referred to, in the real case, as Poincar\'e or Brascamp-Lieb inequalities). Since H\"ormander's seminal work, the $L^2$ method has been recognised as a powerful way of obtaining spectral inequalities, in particular in the context of statistical mechanics, where the method is also refered to as ``Bochner's method", since in many cases the argument boils down to  Bochner's integration by parts formula~\eqref{bochner}. More recently, links with convex geometry have been emphasized. For instance, the $L^2$ method was used to provide a local proof of functional (complex or real) Brunn-Minkowski type inequalities in~\cite{CE, CFM}. But it is mainly the more recent paper by Klartag~\cite{KlartagPTRF} in which he proves the variance conjecture in the case of unconditional distributions, that was the starting point of the present work. Before going on into this, let us discuss a bit the $L^2$ method itself.

Throughout the paper we shall work, unless otherwise stated, with a probability measure  $\mu$ on  $\R^n$ of the form 
\begin{equation}
\label{defmu}
d\mu(x) =e^{-\Phi(x)} \, dx, \qquad \textrm{with $\ \Phi:\R^n\to \R$ of class $C^2$}.
\end{equation}
%given by~\eqref{defmu} 
Introduce
the natural Laplace operator on $L^2(\mu)$ given by
\begin{equation}\label{def:L}
L u = \Delta u - \nabla \Phi \cdot \nabla u\, .
\end{equation}
This operator is well defined on $C^2$-smooth functions and can be extended into a unbounded closed self-adjoint operator on $L^2(\mu)$ with dense domain $\dom(L)$ corresponding to Neumann condition at infinity ensuring~\eqref{ipp1}; however, this domain is not important for our purposes  and we stress that it is enough to know that it contains the space of $C^2$-smooth functions that are compactly supported, which we denote by
$$\mathcal D := \{u:\R^n \to \R \; ;\ \textrm{$u$ of class $C^2$ and compactly supported}\}.$$

For $u\in \mathcal D$ and $f\in L^2(\mu)$ locally Lipschitz (this is just a sufficient requirement to perform the integration by parts, in virtue of Rademacher's differentiation theorem), we have
\begin{equation}\label{ipp1}
\int f Lu \, d\mu = - \int \nabla f \cdot \nabla u \, d\mu
\end{equation}
Since we work with  $\mu$ finite, the kernel of the  self-adjoint  operator  $L$ is given by the constant functions and its orthogonal space will be denoted by  
$$L_0^2(\mu):=\Big\{f \in L^2(\mu) \; ; \ \int f \, d\mu = 0 \Big\} 
%= \ker (L)^\perp
$$
The variance of $f$ is then the square of the $L^2$-norm of the projection of $f$ onto $L_0^2(\mu)$. Note that the Poincar\'e inequality~\eqref{defpoinc} for $\mu$ amounts to the spectral gap estimate $-L\ge \poinc{\mu}$ on $L_0^2(\mu)\cap \mathcal D$, say.
%In all the proofs, we will assume that for a smooth $f\in L^2(\mu)$    orthogonal to constant functions, i.e. $\int f \, \dmu =0$,  there exists a smooth $u\in L^2(\mu)$, unique up to an additive constant,  such that 
%$$f= Lu .$$ 
%This can ensured by assuming that $\Omega=\R^n$ and that $L$ has positive spectral gap. 
%This situation is much more general that the one we shall use in practice. 
%Actually, we could also work with the rather trivial observation (see~\cite[]) that the image by $L$ of smooth compactly supported function is $L^2(\mu)$-dense in the space of functions orthogonal to constant functions. 

The starting point of the argument is to dualize the Poincar\'e inequality using $L$. To this aim, many authors impose that one can solve $f=Lu$ for given $f\in L^2_0(\mu)$, which amounts to saying that $L$ has a closed range (oddly enough, this was rather the conclusion H\"ormander was aiming at). This has the disadvantage that one has to enter into tedious discussions and eventually impose further conditions on the measure $\mu$. 
%We are not willing to do so. 
Instead, we shall use a simple density argument. Let alone this point, the next Lemma is standard, except maybe for the fact that we have included a discussion on the invariances, for later use. So the classical and well-known statement corresponds to the case where $G=\{\id\}$, i.e. no invariance is imposed.

\begin{lemma}\label{lem:duality}
Let $\mu$ be a probability measure on $\R^n$ written as in~\eqref{defmu}, $L$ be the operator given by~\eqref{def:L} and  let $G$ a group of isometries leaving $\mu$ invariant. 
If there exists an application $A:x\to A_x$ from $\R^ n$ to the set of positive $n\times n$ matrices such that, for every $u\in \mathcal D$ that is  $G$-invariant we have 
 
 \begin{equation}
\label{dual:hyp}
\int \big(Lu \big)^2\, d\mu \ge \int A\nabla u\cdot \nabla u \, d\mu,
\end{equation}
 then for every $f\in L^2(\mu)$ locally Lipschitz that is $G$-invariant we have, 
 \begin{equation}
\label{dual:concl}
\var_\mu(f) \le \int A^{-1}\nabla f \cdot \nabla f\, d\mu.
\end{equation}
\end{lemma}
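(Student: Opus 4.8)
The strategy is the standard $L^2$-duality argument, carried out within the subspace of $G$-invariant functions. First I would record the key structural fact that the operator $L$ commutes with the action of $G$: since $\mu$ (hence $\Phi$) is $G$-invariant, for any $R\in G$ one has $L(u\circ R)=(Lu)\circ R$, so $L$ preserves the closed subspace $\mathcal H$ of $G$-invariant functions in $L^2(\mu)$, and likewise $\mathcal D^G:=\{u\in\mathcal D:\ u\ \text{is}\ G\text{-invariant}\}$ is stable under $L$. The hypothesis~\eqref{dual:hyp} is then an inequality between the quadratic forms $u\mapsto\int(Lu)^2\,d\mu$ and $u\mapsto\int A\nabla u\cdot\nabla u\,d\mu$ on $\mathcal D^G$.

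The heart of the argument is the following pointwise/variational inequality. Fix $f\in L^2(\mu)$ locally Lipschitz and $G$-invariant. For any $u\in\mathcal D^G$, integration by parts~\eqref{ipp1} gives
\begin{equation*}
\int \nabla f\cdot\nabla u\,d\mu = -\int f\,Lu\,d\mu = -\int \Big(f-\textstyle\int f\,d\mu\Big)Lu\,d\mu,
\end{equation*}
where the last equality uses $\int Lu\,d\mu = 0$ (apply~\eqref{ipp1} with the constant function). By Cauchy--Schwarz and then~\eqref{dual:hyp},
\begin{equation*}
\Big(\int \nabla f\cdot\nabla u\,d\mu\Big)^2 \le \var_\mu(f)\int (Lu)^2\,d\mu
\ \text{is the wrong direction; instead bound}
\end{equation*}
— more precisely, I would apply Cauchy--Schwarz in the form
\begin{equation*}
\Big(\int \nabla f\cdot\nabla u\,d\mu\Big)^2 \le \Big(\int A^{-1}\nabla f\cdot\nabla f\,d\mu\Big)\Big(\int A\nabla u\cdot\nabla u\,d\mu\Big)
\le \Big(\int A^{-1}\nabla f\cdot\nabla f\,d\mu\Big)\Big(\int (Lu)^2\,d\mu\Big),
\end{equation*}
the first step being the Cauchy--Schwarz inequality for the positive-definite form $A$ (valid pointwise, using $v\cdot w = (A^{-1/2}v)\cdot(A^{1/2}w)$), and the second being hypothesis~\eqref{dual:hyp}. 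Combining with the integration-by-parts identity gives, for every $u\in\mathcal D^G$,
\begin{equation*}
\Big(\int \Big(f-\textstyle\int f\,d\mu\Big)Lu\,d\mu\Big)^2 \le \Big(\int A^{-1}\nabla f\cdot\nabla f\,d\mu\Big)\Big(\int (Lu)^2\,d\mu\Big).
\end{equation*}

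It remains to choose $u$ (or a sequence of $u$'s) so that $Lu$ approximates $f-\int f\,d\mu$ in $L^2(\mu)$; then the left side tends to $\var_\mu(f)^2$, the $\int(Lu)^2\,d\mu$ factor tends to $\var_\mu(f)$, and dividing yields~\eqref{dual:concl}. This is exactly the ``simple density argument'' the authors advertise in place of a closed-range hypothesis: one only needs that $L(\mathcal D^G)$ is dense in $\mathcal H\cap L^2_0(\mu)$ (equivalently, that $L$ restricted to the $G$-invariant zero-mean functions has trivial kernel and dense range, which follows from self-adjointness of $L$ on $L^2(\mu)$ — commuting with the $G$-projection — together with the Neumann-type domain condition ensuring~\eqref{ipp1}). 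Concretely, given $f-\int f\,d\mu\in\mathcal H\cap L^2_0(\mu)$, the spectral theorem applied to the self-adjoint operator $L$ restricted to $\mathcal H$ lets one write $f-\int f\,d\mu$ as an $L^2$-limit of $Lu_k$ with $u_k$ in the domain and $G$-invariant; a further mollification/truncation brings the $u_k$ into $\mathcal D^G$ while keeping $Lu_k\to f-\int f\,d\mu$ in $L^2(\mu)$ and $\int(Lu_k)^2\,d\mu\to\var_\mu(f)$.

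The main obstacle I anticipate is precisely this last density/approximation step: one must justify that the $G$-invariant compactly supported $C^2$ functions suffice, i.e. that projecting onto $G$-invariant functions and then approximating in $\mathcal D$ can be done simultaneously without destroying membership in the form domain. For finite $G$ this is transparent (averaging over $G$ commutes with $L$ and with standard truncation/mollification, since $G$ acts by isometries); for infinite compact $G$ one averages against Haar measure, again commuting with everything relevant. I would handle the case where $f-\int f\,d\mu$ is not exactly in the range of $L|_{\mathcal H}$ by writing it as a limit of elements that are, using that $0$ is an isolated point of the spectrum only via finiteness of $\mu$ is not needed — density of the range of a self-adjoint operator in the orthogonal complement of its kernel is automatic. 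The remaining verifications (Cauchy--Schwarz for $A$, the vanishing of $\int Lu\,d\mu$, continuity of the quadratic forms along the approximating sequence) are routine.
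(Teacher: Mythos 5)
Your proof is correct and follows the same overall blueprint as the paper's: dualize via the integration by parts formula~\eqref{ipp1}, exploit hypothesis~\eqref{dual:hyp}, and close with a density argument that uses averaging over $G$ (Haar measure on the compact closure of $G$) to stay inside the $G$-invariant functions. The two places where you deviate are harmless variants. First, at the algebraic heart you use the Cauchy--Schwarz inequality for the form $A$ and then divide by $\var_\mu(f)$ at the very end (which requires a one-line remark handling the case $\var_\mu(f)=0$); the paper instead completes the square pointwise via $-2v\cdot w - A v\cdot v \le A^{-1}w\cdot w$, obtaining for every $G$-invariant $u\in\mathcal D$ the clean estimate $\var_\mu(f)\le \int(f-Lu)^2\,d\mu + \int A^{-1}\nabla f\cdot\nabla f\,d\mu$, and then lets $Lu\to f-\int f\,d\mu$ with no division needed. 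Second, for density you appeal to the spectral theorem for $L$ restricted to the $G$-invariant subspace and then gesture at mollification/truncation to land in $\mathcal D^G$; the paper instead cites the fact (recalled in [CFM]) that $L(\mathcal D)$ is dense in $L^2_0(\mu)$ and only afterwards symmetrizes the approximants $u_k$ by $\tilde u_k=\int_G u_k\circ R\,d\sigma(R)$, using convexity of the $L^2$ norm and $G$-invariance of $\mu$ and $f$ to see $L\tilde u_k\to f-\int f\,d\mu$. The paper's order of operations (approximate in $\mathcal D$ first, average second) is cleaner because it sidesteps the genuinely non-trivial issue you flag — that passing from an abstract domain element to a compactly supported $C^2$ function while preserving $Lu_k\to f$ in $L^2$ is exactly the content of the cited density fact, and your ``further mollification/truncation'' step is doing the same work without a reference to back it up.
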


\begin{proof}
Let us fix $f\in L_0^2(\mu)$ that is locally Lipschitz and $G$-invariant,  and assume that we have the dual spectral inequality~\eqref{dual:hyp} for functions $u\in \mathcal D$ that are $G$-invariant.

In order to avoid discussion about solvability of $f=Lu$, we will use the following easy and classical fact, recalled in~\cite{CFM}: the space $L(\mathcal D)$ is dense in $L^2_0(\mu)$. We also need to check that one can use $G$-invariant functions $u$ to approach $f$. For this, note first that, for continuous functions,  invariance by $G$ or by the closure  of $G$ (in the usual topology of $\mathcal O_n$) is equivalent, so we can assume that $G$ is compact, and equipped with a bi-invariant Haar measure $\sigma$ normalized to be a probability. If $L u_k \to f$ in $L^2(\mu)$ for some sequence $u_k\in \mathcal D$, introduce $\tilde u_k := \int_G u_k\circ R \, d\sigma(R)$. Then $\tilde  u_k\in \mathcal D$ and by construction $\tilde  u_k$ is $G$-invariant. By convexity of the norm, using that $f$ and $\mu$ are $G$-invariant, we see that $L \tilde u_k \to f$ in $L^2(\mu)$, as wanted.

For an arbitrary function $u\in \mathcal D$ that is $G$ invariant, we have, using~\eqref{ipp1} and the assumption~\eqref{dual:hyp}:
\begin{eqnarray*}
\var_\mu(f) - \int (f-Lu)^2 \, d\mu & = & -2\int \nabla f \cdot \nabla u \, d\mu - \int (Lu)^2 \, d\mu  \\
 & \le & -2\int \nabla f \cdot \nabla u \, d\mu  - \int A\nabla u\cdot \nabla u \, d\mu \\
 & \le & \int A^{-1}\nabla f \cdot \nabla f\, d\mu.
\end{eqnarray*}
where we also used pointwise the inequality $2\,  v\cdot w \le A v \cdot v + A^{-1} w \cdot w$ for $v,w\in \R^n$. The conclusion then follows by the density argument recalled above.
\end{proof}

The power of the $L^2$ method relies on the fact that the dualization procedure of Lemma~\ref{lem:duality} allows for the use of the ``curvature" (or convexity) of the measure $\mu$ given by~\eqref{defmu}, which enters through the following classical integration by parts formula: for every $u\in \mathcal D$ we have
\begin{equation}\label{bochner}
\int (Lu)^2 \dmu = \int D^2\Phi(x) \nabla u(x) \cdot \nabla u(x) \dmu(x) + \int \left\| D^2 u (x) \right\|^2 \dmu(x)
\end{equation}
where $ \displaystyle \left\| D^2 u (x) \right\|^2:= \sum_{i,j\le n}(\partial^2_{ij} u(x))^2 $ is the square of the  Hilbert-Schmidt norm of the Hessian of $u$ at $x\in \R^n$.  In particular,  we see that $\int (Lu)^2 \dmu \ge \int D^2\Phi \nabla u \cdot \nabla  \dmu$, which translates,  when $\mu$ is strictly log-concave (meaning $D^2\Phi>0$), into  the celebrated Brascamp-Lieb inequality \cite{BL}: for every $f\in L^2(\mu)$ that is locally-Lipschitz 
$$ \var_\mu(f) \le \int \big(D^2\Phi\big)^{-1}\nabla f \cdot \nabla f \, d\mu.$$
In particular, if there exists $\rho \in (0,+\infty)$, such that $D^2 \Phi \ge \rho\,  \mathrm{Id}$ pointwise, then  $\poinc{\mu}\le \rho^{-1}$.

We can mention here  that when $\mu$ has some convex support $K$, i.e. $\mu$ has a density $e^{-\Phi}$ with  $\Phi$ of class $C^2$ on $K$, and equal to $+\infty$ outside $K$, then the integration by parts above incorporates a boundary term, which, by the convexity of $K$ is always nonnegative. Therefore, formula~\eqref{bochner} becomes an inequality $\ge$, which goes in the right direction for running all the arguments we use in the paper. This explains why our results can directly be extended to this more general class of measures. However, it seems like a challenging question to be able to use, in this case, the extra information coming from the boundary term.

Klartag used, among other things, a similar $L^2$ argument to provide an optimal bound for the variance of the function $f(x)=|x|^2$ when $\mu$ is a log-concave unconditional measure (actually, he worked with an unconditional convex body $K$).
% and thus with the measure $\mu_K$). 
Unconditionality means invariance with respect to the coordinate hyperplanes, i.e. under the reflections $S_{e_1^\perp}, \ldots, S_{e_n^\perp}$. Klartag manages to use~\eqref{bochner} in the form   $\int (Lu)^2\, d\mu\ge \int \| D^2 u \|^2\, d\mu$ when $\mu$ is log-concave. He then proves a ``$H^{-1}(\mu)$" estimate for the derivatives $\partial_i f$, which in turn is controlled using the Wasserstein (optimal transport) distance; a transportation argument closes the argument. 
%We shall extract from Klartag's argument its main new idea: a proper use of the invariance in the $L^2$ argument. 
%A slightly different proof will be given in the next section.

We want to emphasize the role of symmetry in the argument. The idea is that if $u$ has some invariance,
 then $\nabla u$ will have the ``anti-invariance" (for instance we pass from being even to being odd with respect to some direction, say). 
Let us see this principle in action in the unconditional case by giving a simplified version of  Klartag's proof. We shall work directly with measures instead of sets.

So let $\mu$ be be given by~\eqref{defmu} with $\Phi$ convex such that $\Phi(\pm x_1, \ldots, \pm x_n)= \Phi(x_1, \ldots, x_n)$. Let $u\in \mathcal D$ having the same invariances.
The argument relies on a lower bound for each $\int (\partial_{ii}^2 u)^2\dmu $. For notational simplicity, let us consider first the case $i=1$ and write $x=(x_1, y)$  for $x\in \R^n$ with $y=(x_2, \ldots ,x_n)$. For fixed $y \in \R^{n-1}$, the measure $e^{-\Phi(x_1, y)}\, dx_1$ is a (finite) log-concave measure on $\R$. Such measures are known to satisfy a Poincar\'e inequality; the bound~\eqref{KLS} for instance  yields that for  $v:\R\to \R$ smooth with $\int_\R v(x_1)\, e^{-\Phi(x_1,y)}\, dx_1 =0$, 
\begin{equation}\label{poinc1}
\int_\R v(x_1)^2 e^{-\Phi(x_1,y)} \, dx_1 \le 4 \frac{\int_\R x_1^2 e^{-\Phi(x_1,y)} \, dx_1}{\int_\R e^{-\Phi(x_1, y)} \, dx_1}\, \int_\R v'(x_1)^2 e^{-\Phi(x_1,y)} \, dx_1  
\end{equation}
%Let us remark that Klartag's proof assumed that 
%$\mu = \mu_K$, the Lebesgue measure restricted to an unconditional convex body. In this case, our previous discussion is essentially trivial since we are working with the %Lebesgue measure restricted to a symmetric segment  $[-b(y), b(y)]\subset \R$ for which spectral bounds are explicitly known and~\eqref{poinc1} is straightforward. 
%
But $u$ and $\Phi$  are unconditional, and so   the function $x_1 \to \Phi(x_1,y)$ is even and the function $x_1 \to \partial_i u(x_1,y)$ is odd, ensuring that $\int_\R  \partial_1 u(x_1 ,y)\, e^{-\Phi(x_1,y)} \, dx_1 = 0$. Then,~\eqref{poinc1} applies with $v(x_1)=\partial_1 u(x_1 ,y)$ and $v'(x_1)=\partial^2_{11} u(x_1,y)$.

To summarize, if  for  $i=1, \ldots , n$, we let $P_i$ be  the orthogonal projection onto the coordinate hyperplane $e_i^\perp$ and define
$$g_i(x) = g_i(P_i x):=4   \frac{\int_\R  x_i^2 \,  e^{-\Phi(P_i x + x_i e_i)} \, dx_i }{\int_\R e^{-\Phi(P_i x + x_i e_i)} \, dx_i} , $$
we have from the argument explained above for $i=1$ and from Fubini's theorem that
$$\int \frac1{g_i(P_i x)} \big( \partial_i u(x)\big)^2 \dmu(x) \le \int \big(\partial^2_{ii} u\big)^2 \dmu .$$
Using that $ \sum_{i=1}^n \big(\partial^2_{ii} u\big)^2 \le \|D^2 u \|^2$ in~\eqref{bochner}, we get a bound~\eqref{dual:hyp} which implies, by Lemma~\ref{lem:duality} the following estimate: for every function $f\in L^2(\mu)$ that is unconditional and locally Lipschitz, 
$$\var_\mu (f) \le \int  \sum_{i=1}^n g_i(P_i x)\,  \big(\partial_i f (x) \big)^2 \dmu(x).$$
When we apply this estimate to the particular case $f(x)=|x|^2$, the following happens. We can use 
 again Fubini's theorem and H\"older's inequality, to get that 
$$\var_\mu (|x|^2) \le 4 \sum_{i=1}^n  \int x_i^4 \dmu(x).$$
 It is a well known and useful consequence of the Pr\'ekopa-Leindler inequality due to Borell that $L_p$ norms of convex homogeneous functions with respect to a log-concave measure are equivalent \cite{Borell}.
 In particular, there exists a numerical constant $c>0$ such that for every $n\ge 1$, every log-concave probability  measure $\mu$  on $\R^n$ and every even semi-norm $H$ -- typically $H(x) = x\cdot \theta$ or $H(x)=|x|$ --  the following reverse H\"older inequality holds:
\begin{equation}\label{borell}
\int H(x)^4 \dmu(x) \le c \left(\int H(x)^2 \dmu(x)\right)^2.
\end{equation} 
We have therefore proved that for an unconditional  log-concave measure $\mu$ on $\R^n$ it holds that
$\var_\mu (|x|^2) \le \tilde C \sum_{i=1}^n  \big(\int x_i^2 \dmu(x))^2$. When $\mu$ is furthermore isotropic, i.e. with covariance matrix equal to the identity, the previous bound reads as
$$\var_\mu (|x|^2) \le  \tilde c \, n ,$$
which answers positively the variance conjecture in the case of unconditional distributions.

\section{Functions and measures with invariances}
\label{vargeneral}

As apparent from the treatment we gave of the unconditional case, we shall need to work with restrictions of measures onto subspaces. This idea has been used already been used successfully in statistical mechanics, and our goal is to bring the invariances into the game.

Let us start with some notation. 
%We equip $\R^n$ with its standard euclidean structure ; it induces a canonical euclidean structure on every affine subspace. 
For a subspace $F\subset \R^n$, we denote by $P_F$ the orthogonal projection onto $F$. Given a probability measure $\mu$ with density $e^{-\Phi}$ on $\R^n$, a subspace $E\subset \R^n$ and a point $x\in \R^n$, we denote by $\mu_{x,E}$ the  probability measure on $E$ obtained by conditioning $\mu$ to fixed $P_{E^\perp}x$, i.e. 
$$d\mu_{x, E} (y) := e^{-\Phi(y+P_{E^\perp}x)} \frac{dy}{\int_{E}e^{-\Phi(z+P_{E^\perp}x)} \, dz} , \qquad y\in E.$$
In other words, if  $X\sim \mu$, then
$$\mathbb E\big( X | \, P_{E^\perp} X = P_{E^\perp}x  \big)\;  \sim \mu_{x, E}.$$
The measure $\mu_{x,E}$ can be seen likewise as a measure on $E$ or on $x+E = P_{E^\perp}x+E$.
Note that $\mu_{x,E}$ depends only on $P_{E^\perp} x$: $\mu_{x,E} = \mu_{P_{E^\perp}x, E}$. For suitable $g:\R^n \to \R$, we shall extensively use Fubini's theorem 
in the form
\begin{equation*}
\int g\, d\mu = \int_{x \in E^\perp} \left(\int_{y\in E} g(x+y) \, e^{-\Phi(x+y)} \, dy \right) \, dx,
\end{equation*}
or in the form
\begin{equation}\label{fubini}
\int f \, d\mu= \int \left(\int f \, d\mu_{x,E}\right) \, d\mu(x).
\end{equation}

Let us recall a restriction argument put forward by Helffer~\cite{Helffer}. Here we work with a measure $\mu$ satisfying~\eqref{defmu} and the canonical basis $\{e_i\}$. Using  the Poincar\'e inequality for the measures $\nu=\mu_{x,\R e_i}$ in the form
 $\poinc{\nu} \int (L_\nu w)^2 d\nu \ge \int (w')^2 d\nu$ we have
 \begin{eqnarray*}
 \int (Lu)^2 d\mu &=& \int \big(\|D^2u\|^2+D^2\Phi \nabla u\cdot\nabla u\big)\, d\mu \\
 &\ge & \int \sum_{i\neq j} \partial^2_{i,j} \Phi \, \partial_i u \, \partial_j u \dmu+ \sum_i \int \int \Big((\partial^2_{i,i} u)^2+   \partial^2_{i,i} \Phi\, (\partial_i u )^2\Big)
d\mu_{x,E_i} \,d\mu(x) \\
&\ge & \int \sum_{i\neq j} \partial^2_{i,j} \Phi \, \partial_i u \, \partial_j u \dmu+ \sum_i \int \Big(\poinc{\mu_{x,\R e_i}}^{-1}\int (\partial_i u )^2
d\mu_{x,E_i} \Big)\,d\mu(x) \\
&=&  \int K\nabla u\cdot \nabla u \, d\mu.
\end{eqnarray*}
%\end{proof}
where the $n \times n$ matrix $K(x)$ is defined as follows: $K(x)_{i,i}= \poinc{\mu_{x,\R e_i}}^{-1}$ and for $i\neq j$, 
$K(x)_{i,j}=\partial^2_{i,j} \Phi (x)$. Therefore, by Lemma~\ref{lem:duality} (with no invariance yet, i.e. $G=\{\id\}$), we see that under the assumption that $K(x)>0$ for every $x\in \R^n$, we have the following inequality: for every $f\in L^2(\mu)$ that is locally Lipschitz, 
$$\var_\mu(f) \le \int K^{-1}\nabla f\cdot \nabla f \, d\mu.$$
Unlike in the Brascamp-Lieb inequality, log-concavity is not required here.
This is particularly effective for perturbations of product measures. 

We want to push forward this approach by working with functions sharing invariances with the underlying measure. We will need the following property of such functions,
which is obvious for a reflection (it then follows from the even/odd character of the functions).
\begin{fact}\label{fact:antisymmetry}
Let $\mu$ be a probability measure on $\R^n$, $R\in \O_n(\mu)$ and set $E=\fix(R)^\perp$. For every $R$-invariant function $g$ and for every $x \in \R^n$ we have,
$$\int P_{E}\nabla g(y) \, d\mu_{x,E}(y) = 0 .$$
In particular, the measure $\mu_{x,E}$ is centered.
\end{fact}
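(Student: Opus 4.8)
The plan is to exploit the defining symmetry $R$ together with the rotational invariance of $\mu_{x,E}$ inherited from $\mu$. First I would record the key structural fact: since $R$ fixes $\fix(R)$ pointwise and $E=\fix(R)^\perp$ is $R$-invariant, the isometry $R$ splits as $R = \id_{\fix(R)}\oplus R_E$ where $R_E$ is an isometry of $E$; moreover $R$ has no nonzero fixed point in $E$, i.e. $\id_E - R_E$ is invertible on $E$. Because $\mu$ is $R$-invariant and $R$ fixes $P_{E^\perp}x$ (which lies in $\fix(R)$), the conditional measure $\mu_{x,E}$, viewed as a measure on $E$, is $R_E$-invariant: indeed $\Phi$ is $R$-invariant, so $\Phi(R_E y + P_{E^\perp}x) = \Phi(R(y+P_{E^\perp}x)) = \Phi(y+P_{E^\perp}x)$ for $y\in E$, and the normalizing constant is unchanged by the linear change of variables $y\mapsto R_E y$.

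Next I would compute the vector $I := \int_E P_E\nabla g(y+P_{E^\perp}x)\, d\mu_{x,E}(y) \in E$. The idea is to apply the change of variables $y\mapsto R_E y$ in this integral. On one hand the integral is unchanged (by $R_E$-invariance of $\mu_{x,E}$). On the other hand, since $g$ is $R$-invariant, $g(R(y+P_{E^\perp}x)) = g(y+P_{E^\perp}x)$; differentiating this identity and projecting onto $E$ gives the transformation rule $P_E\nabla g(R_E y + P_{E^\perp}x) = R_E\, P_E\nabla g(y+P_{E^\perp}x)$ (using that $R$ is orthogonal and block-diagonal, so $R^{-1}=R^t$ maps $E$ to $E$ and commutes with $P_E$). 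Substituting, one obtains $I = R_E\, I$, hence $(\id_E - R_E) I = 0$, and since $\id_E - R_E$ is invertible on $E$ we conclude $I = 0$, which is exactly the claimed identity. The "in particular" statement follows by taking $g(x) = x\cdot\theta$ for an arbitrary $\theta\in E$: then $P_E\nabla g \equiv \theta$ and the vanishing of the integral says $\int_E (y\cdot\theta)\,d\mu_{x,E}(y)=0$ for all $\theta\in E$, i.e. the barycenter of $\mu_{x,E}$ in $E$ is the origin (equivalently, as a measure on $x+E$, it is centered at $P_{E^\perp}x$).

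I expect the main (and really only) subtlety to be the careful bookkeeping of the differentiation identity and the projections: one must check that differentiating $g\circ R = g$ yields $R^t\,\nabla g\circ R = \nabla g$, then project onto $E$ and use that $P_E$ commutes with $R$ (valid precisely because $E$ and $E^\perp=\fix(R)$ are both $R$-invariant, so $R$ is block-diagonal with respect to this decomposition) to get $R_E^t\,(P_E\nabla g)\circ R = P_E\nabla g$, equivalently $(P_E\nabla g)(R_E y + P_{E^\perp}x) = R_E\,(P_E\nabla g)(y + P_{E^\perp}x)$. A mild regularity point worth a remark is that $g$ need only be locally Lipschitz, so $\nabla g$ exists a.e. by Rademacher and the manipulations are justified after the change of variables; alternatively one proves the identity first for $g\in\mathcal D$ and then passes to the limit, or simply notes that the statement is used in the excerpt for such smooth $g$. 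Everything else — the invariance of $\mu_{x,E}$, the change of variables, the invertibility of $\id_E - R_E$ — is elementary linear algebra and Fubini, so there is no genuine obstacle beyond writing it cleanly.
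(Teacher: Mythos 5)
Your main argument is essentially the paper's: both proofs observe that the vector $I=\int P_E\nabla g\,d\mu_{x,E}$ lies in $E$, apply the change of variable $y\mapsto Ry$ together with $R$-invariance of $g$ and of the density, conclude $I=RI$, and deduce $I=0$. Your phrasing ``$\id_E-R_E$ is invertible on $E$'' is just a restatement of the paper's observation that $I\in \fix(R)\cap E=E^\bot\cap E=\{0\}$, so this part is correct and equivalent.

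There is, however, a small but genuine error in your derivation of the ``in particular'' claim. You apply the Fact to $g(x)=x\cdot\theta$ with $\theta\in E$, but that function is \emph{not} $R$-invariant: $g(Rx)=x\cdot R^{t}\theta$, and for orthogonal $R$ one has $\fix(R^{t})=\fix(R)=E^\bot$, so $g\circ R=g$ only when $\theta\in E^\bot$, i.e.\ $\theta=0$. The hypothesis of the Fact therefore fails for the very functions you invoke. The fix is easy: take $g(x)=\tfrac12|x|^2$, which is invariant under every isometry, so that $P_E\nabla g(y)=P_E y$ and the Fact gives $\int P_E y\,d\mu_{x,E}(y)=0$, which is exactly the centering. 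Alternatively, you can dispense with the Fact entirely for this last claim and argue directly from the $R_E$-invariance of $\mu_{x,E}$ (which you already established): the barycenter $\int y\,d\mu_{x,E}(y)\in E$ is fixed by $R_E$, hence lies in $\fix(R)\cap E=\{0\}$.
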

\begin{proof}
Denote by $\rho$ the density of $\mu$ and set $a:= \int_{E} P_E \nabla g(y) \, \rho(y) dy$. By definition $a\in E$. 
But since $E= \fix(R)^\perp$ and $R$ is normal,  we have $R E = E$ and $P_E R = RP_E$, and so
$$a = \int_{E} P_E \nabla g( Ry) \, \rho(Ry) dy = \int_{E} P_E R \nabla g(y) \, \rho(y) dy = Ra$$
were we used  that $\rho$ and $g$ are $R$-invariant.  This shows that $a \in \fix(R)= E^\perp$ and therefore $a=0$.
 \end{proof}

In the sequel,  
 we shall be interested in the case when we have fixed-point  subspaces $E_i=\fix(R_i)^ \perp$, $i=1,\ldots,m$,  that induce a decomposition of the identity of the form
\begin{equation}\label{decompid}
\sum_{i=1}^m c_i \, P_{E_i}  = \id
\end{equation}
where the $c_i$'s are positive reals. It will be used also in the form
$$\forall v \in \R^n, \qquad \sum_{i=1}^m c_i \, |P_{E_i} v |^2 \, = \, |v|^2 .$$
This situation naturally arises when we consider measures having enough invariances by reflections.
 The simplest example is when $m=n$, $E_i=\R e_i$ and $c_i=1$, where $\{e_i\}$ is the canonical basis of $\R^n$ (this corresponds to unconditional measures). 
 More examples appear in the appendix. Note that taking  traces yields $\sum c_i \text{dim}(E_i) = n$.

\begin{theorem}\label{theo:invariance1}
Let $\mu$ be a  probability measure on $\R^n$ given by~\eqref{defmu}. Assume there exists $R_1, \ldots, R_m \in \O_n(\mu)$ and $c_1, \ldots , c_m >0$ such that, setting  $E_i:=\fix(R_i)^\perp$, we have that $\{E_i, c_i\}$ decompose the identity in the sense of~\eqref{decompid}. 
Assume that for all $x\in \R^n$,
% in the sense of matrices 
$$ H(x):=D^2\Phi(x)+ \sum_{i=1}^m \frac{c_i}{ \poinc{\mu_{x,E_i}} }\, P_{E_i}>0.$$
Then for every locally Lipschitz and $\{R_i\}_{i\le m}$-invariant  function $f:\R^n\to \R$ we have,
$$ \var_\mu(f) \le \int H^{-1} \nabla f \cdot \nabla f \, d\mu.$$
In particular, if there exists $\rho\in\R$ such that $D^2\Phi\ge \rho \mathrm{Id}$  and for all $x,i$, $ \poinc{\mu_{x, E_i}}^{-1} + \rho\ge 0$, then  
 every for every locally Lipschitz and $\{R_i\}_{i\le m}$-invariant function $f:\R^n\to \R$,
\begin{eqnarray}
\label{eq:var2} 
\var_\mu(f) &\le & \int \left( \sum_{i=1}^m c_i \Big(\poinc{\mu_{x, E_i}}^{-1}+\rho\Big)^{-1} \,\big| P_{E_i} \nabla f (x)\big|^2 \right) \, d\mu(x)\\
&\le& \sup_{i,x} \Big(\poinc{\mu_{x, E_i}}^{-1}+\rho\Big)^{-1} \int |\nabla f |^2  d\mu. \nonumber
\end{eqnarray}
\end{theorem}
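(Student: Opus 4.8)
The plan is to combine the Bochner formula~\eqref{bochner} with Helffer's one-dimensional restriction trick, but carried out along the subspaces $E_i$ rather than along coordinate axes, using Fact~\ref{fact:antisymmetry} to produce the centering that legitimizes the Poincar\'e inequality on each conditioned measure, and then to feed the resulting inequality into Lemma~\ref{lem:duality} with $G=\langle R_1,\dots,R_m\rangle$. Concretely, first I would fix a $G$-invariant $u\in\mathcal D$ and decompose the Hessian term via~\eqref{decompid}: since $\sum_i c_i P_{E_i}=\id$, for each fixed $x$ the vector $v=\nabla u(x)$ satisfies $\sum_i c_i\,P_{E_i}D^2u(x)P_{E_i}\,v\cdot v \le \|D^2u(x)\|^2$ after summing appropriately — more precisely, I want a pointwise inequality of the shape $\|D^2u(x)\|^2 \ge \sum_i c_i\,\langle \text{(second derivative of }u\text{ in the }E_i\text{ block)},\ (\partial_{E_i}u)\cdot(\partial_{E_i}u)\rangle$ that will match what the restriction argument can control. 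The cleanest route is to reduce, by Fubini~\eqref{fubini} applied with the splitting $\R^n = E_i^\perp \oplus E_i$, the quantity $\int_{E_i}\|D^2_{E_i}u\|^2\,d\mu_{x,E_i}$ on each slice and then invoke the Poincar\'e inequality $\poinc{\mu_{x,E_i}}\int (L_{\mu_{x,E_i}}w)^2\,d\mu_{x,E_i}\ge\int |\nabla_{E_i}w|^2\,d\mu_{x,E_i}$, exactly as in the displayed Helffer computation in the excerpt but with $\R e_i$ replaced by $E_i$.

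The centering is the point where the invariance enters and where one must be careful. For the restricted Poincar\'e inequality to apply to $w = P_{E_i}\nabla u$ (componentwise, or rather to the relevant scalar derivatives), one needs $\int_{E_i} P_{E_i}\nabla u\,d\mu_{x,E_i}=0$; this is precisely Fact~\ref{fact:antisymmetry} applied to the $R_i$-invariant function $u$ and the subspace $E_i=\fix(R_i)^\perp$. So on each slice the ``$\partial_i u$ has the anti-invariance'' heuristic from the unconditional case is replaced by: $P_{E_i}\nabla u$ integrates to zero against $\mu_{x,E_i}$ because $R_i$ acts as $-\id$ on $E_i$ and fixes $E_i^\perp$. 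Running the same chain of inequalities as in the quoted Helffer estimate, but keeping the cross terms of $D^2\Phi$ together with the pure Hessian terms, and weighting the $i$-th contribution by $c_i$, I would arrive at the dual spectral inequality
\[
\int (Lu)^2\,d\mu \ \ge\ \int \Big( D^2\Phi(x) + \sum_{i=1}^m \frac{c_i}{\poinc{\mu_{x,E_i}}}\,P_{E_i}\Big)\nabla u\cdot\nabla u\,d\mu = \int H(x)\,\nabla u\cdot\nabla u\,d\mu
\]
valid for all $G$-invariant $u\in\mathcal D$. Here one uses $\sum_i c_i P_{E_i}D^2\Phi(x)P_{E_i} \le \cdots$ — rather, one keeps the full quadratic form $D^2\Phi\,\nabla u\cdot\nabla u$ from~\eqref{bochner} untouched and only expands $\|D^2u\|^2$ — so no convexity of $\Phi$ is needed. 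Then Lemma~\ref{lem:duality} with $A = H$, which is positive by hypothesis, gives $\var_\mu(f)\le\int H^{-1}\nabla f\cdot\nabla f\,d\mu$ for locally Lipschitz $G$-invariant $f$, which is the first assertion.

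For the ``in particular'' part, assume $D^2\Phi\ge\rho\,\id$ and $\poinc{\mu_{x,E_i}}^{-1}+\rho\ge0$ for all $i,x$. The idea is simply to lower-bound $H(x)$ by a block-diagonal operator with respect to the decomposition~\eqref{decompid}. Writing $D^2\Phi(x)\ge\rho\,\id = \rho\sum_i c_i P_{E_i}$ and adding, one gets $H(x) \ge \sum_i c_i\big(\poinc{\mu_{x,E_i}}^{-1}+\rho\big)P_{E_i}$; the right-hand side is a nonnegative combination of projections, hence invertible provided each coefficient is positive, and its inverse is controlled using $\sum_i c_i P_{E_i}=\id$. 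Inverting this and using $\langle H^{-1}v,v\rangle \le \langle\big(\sum_i c_i(\poinc{\mu_{x,E_i}}^{-1}+\rho)P_{E_i}\big)^{-1}v,v\rangle = \sum_i c_i(\poinc{\mu_{x,E_i}}^{-1}+\rho)^{-1}|P_{E_i}v|^2$ (this last identity holds because the $P_{E_i}$, while not orthogonal to each other, satisfy the resolution of identity, and one checks the quadratic-form identity directly by testing on $v$ and using $\sum c_i|P_{E_i}v|^2=|v|^2$) yields~\eqref{eq:var2}; the final bound follows by pulling the supremum out and using $\sum_i c_i|P_{E_i}\nabla f|^2 = |\nabla f|^2$ again.

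The main obstacle I anticipate is the bookkeeping in the pointwise Hessian inequality: justifying that expanding $\|D^2u\|^2$ across the (generally non-orthogonal, overlapping) family $\{E_i\}$ and applying the slice-wise Poincar\'e inequalities on each $E_i$ really produces the clean weighted sum $\sum_i \frac{c_i}{\poinc{\mu_{x,E_i}}}P_{E_i}$ with the \emph{same} constants $c_i$ as in~\eqref{decompid}, and not some worse cross-term-laden expression. In the unconditional case the $E_i=\R e_i$ are mutually orthogonal and $\|D^2u\|^2\ge\sum_i(\partial^2_{ii}u)^2$ is transparent; for general reflection-generated decompositions one must argue that the second-order differential operator of $u$ along $E_i$ (i.e. $P_{E_i}D^2u\,P_{E_i}$ viewed as the Hessian of the restriction $u|_{x+E_i}$) contributes, via Bochner on the conditioned measure $\mu_{x,E_i}$ and the convexity/positivity of $\Phi$ restricted to slices, enough Hilbert--Schmidt mass — and that after integrating in $x$ via~\eqref{fubini} and weighting by $c_i$ the total does not exceed $\int\|D^2u\|^2\,d\mu$. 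This is where~\eqref{decompid}, and in particular the trace identity $\sum_i c_i\dim E_i = n$, is really used. I expect the remaining steps (the two applications of Lemma~\ref{lem:duality}, the operator inequality $H\ge\sum_i c_i(\poinc{\mu_{x,E_i}}^{-1}+\rho)P_{E_i}$, and the final $L^\infty$ bound) to be routine.
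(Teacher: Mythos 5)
Your overall route coincides with the paper's: dualize via Lemma~\ref{lem:duality}, invoke Bochner~\eqref{bochner}, use Fact~\ref{fact:antisymmetry} to legitimize the slice-wise Poincar\'e inequality on each $\mu_{x,E_i}$, and then Fubini. However, you have left open, and partly mis-handled, the two operator-theoretic steps that make the argument actually close.

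First, the ``anticipated obstacle'' you describe is precisely the pointwise inequality
$\|D^2u\|^2 \ge \sum_i c_i\,\|P_{E_i}D^2u\,P_{E_i}\|^2$, and you do not prove it. It is not just bookkeeping: it is the step where the non-orthogonality and overlap of the $E_i$ could a priori ruin the argument. The paper disposes of it in one line by writing $\mathrm{Tr}(H^2)=\sum_i c_i\,\mathrm{Tr}(P_{E_i}H^2P_{E_i})$ (using $\sum c_iP_{E_i}=\id$ inside the trace) and then using the contraction inequality $\mathrm{Tr}(P_{E_i}H^2P_{E_i})=\|HP_{E_i}\|^2\ge\|P_{E_i}HP_{E_i}\|^2=\mathrm{Tr}\big((P_{E_i}HP_{E_i})^2\big)$. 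That is the device you should supply; the trace identity $\sum c_i\dim E_i=n$ that you cite is a consequence, not the mechanism.

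Second, in the ``in particular'' step you assert the \emph{identity}
$\big(\sum_i c_i\alpha_i P_{E_i}\big)^{-1}v\cdot v = \sum_i \frac{c_i}{\alpha_i}|P_{E_i}v|^2$
and claim it can be checked ``directly by testing on $v$ and using $\sum c_i|P_{E_i}v|^2=|v|^2$.'' This is false: even in dimension one with $E_1=E_2=\R$, $c_1=c_2=1/2$, $\alpha_1=1$, $\alpha_2=4$, the left side is $2/5$ while the right side is $5/8$. The true statement is only the \emph{inequality} $H^{-1}v\cdot v\le\sum_i \frac{c_i}{\alpha_i}|P_{E_i}v|^2$ whenever $H\ge\sum_i c_i\alpha_iP_{E_i}>0$, and one has to prove it; the paper does so by writing $w=H^{-1}v$, observing $H^{-1}v\cdot v=2v\cdot w-Hw\cdot w\le\sum_ic_i\big(2P_{E_i}v\cdot P_{E_i}w-\alpha_i|P_{E_i}w|^2\big)$ (using the decomposition of the identity in the linear term and the assumed lower bound on $H$ in the quadratic term), and then optimizing $2ab-\alpha b^2\le a^2/\alpha$ pointwise. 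Until you replace the claimed identity by this (or an equivalent) inequality argument, the proof of~\eqref{eq:var2} is not complete.
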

\begin{proof}
By Lemma~\ref{lem:duality}, it is sufficient to prove that for every $\{R_i\}$-invariant $u\in \mathcal D$, we have  $\int (Lu)^2 d\mu\ge \int H\nabla u\cdot\nabla u\, d\mu$, which rewrites, in view of~\eqref{bochner} as
\begin{equation}\label{eq:goal}
\int \|D^2u\|^2d\mu \ge \int \sum \frac{c_i}{\poinc{\mu_{x,E_i}}} |P_{E_i}\nabla u|^2 d\mu(x). 
\end{equation}
To establish the latter, first note that for every symmetric matrix $H$,
\begin{equation}\label{ineqHS}
\| H\|^2 =\mathrm{Tr}(H^2)= \sum_{i=1}^m c_i\, \mathrm{Tr}(P_{E_i} H^2 P_{E_i}) \ge \sum_{i=1}^m c_i \, \mathrm{Tr}((P_{E_i} H P_{E_i})^2)
 =\sum_{i=1}^m c_i \|P_{E_i} H P_{E_i}\|^2.
\end{equation}
It follows that $\int \|D^2u\|^2d\mu \ge \int \sum c_i  \|P_{E_i}D^2uP_{E_i}\|^2d\mu.$

Next, by Fact~\ref{fact:antisymmetry}, we know that for all $x,i$,  $\int P_{E_i}\nabla u \,d\mu_{x,E_i}=0$. Hence for all unit vector 
and $a\in E_i$, we deduce that the function
$h= \nabla u \cdot a$ verifies $\int h \, d\mu_{x,E_i}=0$ and  so $\int h^2 d\mu_{x,E}  \le \poinc{\mu_{x,E_i}}\int |P_{E_i} \nabla h|^2 \, d\mu_{x,E_i}$.
 By taking vectors $a$ forming an orthonormal basis of $E_i$ we deduce that
$$\int |P_{E_i} \nabla u |^2 \, d\mu_{x,E_i} \le \poinc{\mu_{x,E_i}} \int \| P_{E_i} D^2 u P_{E_i} \|^2 \, d\mu_{x,E_i} .$$
Using Fubini's theorem in the form of \eqref{fubini}, we get that for all $i=1,\ldots,m$,
$$\int  \| P_{E_i} D^2 u P_{E_i} \|^2  \, d\mu\ge \int \frac1{\poinc{\mu_{x,E_i}}}  |P_{E_i} \nabla u (x)|^2 \, d\mu(x).$$
Summing upon the index $i$ gives \eqref{eq:goal}, which concludes the proof of the general case.

\medskip
For the special case when $D^2\Phi\ge \rho \mathrm{Id}$, we  have  $H(x) \ge \sum c_i (\rho + \poinc{\mu_{x,E_i}}) P_{E_i}$ and the result follows by bounding $H^{-1}$ from above. Indeed, if a positive matrix verifies $H\ge \sum c_i \alpha_i P_{E_i}$ with the $\alpha_i>0$, then,
for every vector $v\in \R^n$, setting $w= H^{-1} v$, we have
$$
H^{-1} v \cdot v  =  2 v\cdot w - H w \cdot w 
\le \sum_{i=1}^m c_i \Big( 2 P_{E_i} v \cdot P_{E_i} w - \alpha_i |P_{E_i} w|^2 \Big)  
  \le   \sum_{i=1}^m \frac{c_i}{\alpha_i} |P_{E_i} v|^2.
$$
%
%could estimate $H^{-1}$ from above.
%Alternatively, note that the above argument gives 
%\begin{eqnarray*}
% \var_\mu(f) &=& \int (Lu)^2\, d\mu \ge \int \left[\rho |\nabla u|^2 +  \sum \frac{c_i}{\poinc{\mu_{x,E_i}}} |P_{E_i}\nabla u|^2\right] d\mu(x) \\
%  &=& \int \left[ \sum c_i \Big(\poinc{\mu_{x,E_i}}^{-1} +\rho\Big)\,  |P_{E_i}\nabla u|^2\right] d\mu(x),
%\end{eqnarray*}
%where the last equality comes from the decomposition of the identity. Next we dualise this estimate
%\begin{eqnarray*}
%\var_\mu(f) &=& \int f\, Lu \, d\mu= -\int \nabla f \cdot \nabla u= -\int \sum_{i=1}^m c_i  P_{E_i} \nabla f \cdot P_{E_i} \nabla u \, d\mu  \\
%  &\le & \left( \int \sum_{i=1}^m c_i \Big(\poinc{\mu_{x, E_i}}^{-1}+\rho\Big)^{-1} \,\big| P_{E_i} \nabla f (x)\big|^2  \,d\mu(x)\right)^{\frac{1}{2}}\left(\var_\mu(f)  \right)^{\frac{1}{2}}.
% \end{eqnarray*}
\end{proof}

Next, we present a variant of the previous result, which does not require invariances.% (as indicated by Fact~\ref{fact:antisymmetry}). 

\begin{theorem}\label{theo:invariance2}
Let $\mu$ be a  probability measure on $\R^n$ given by~\eqref{defmu}
 such that $D^2\Phi\ge \rho \mathrm{Id}$ on $\R^n$ for some  $\rho\ge 0$.
Let $E_1, \ldots, E_m$ be subspaces of $\R^n$ and  $c_1, \ldots , c_m >0$ such that $\{E_i, c_i\}$ decompose the identity in the sense of~\eqref{decompid}. Then, for every locally Lipschitz function $f:\R^n\to \R$ such that for every $x\in \R^n$ and $i\le m$,
\begin{equation}\label{zeromean}
\int P_{E_i} \nabla f  \, d\mu_{x,E_i} = 0,
\end{equation}
we have
$$\var_\mu(f) \le \int \left( \sum_{i=1}^m c_i \Big( \poinc{\mu_{x, E_i}}^{-1}+\rho\Big)^{-1}  | P_{E_i} \nabla f (x)|^2 \right) \, d\mu(x).$$
\end{theorem}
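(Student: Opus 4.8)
The plan is to run the same $L^2$ argument as in Theorem~\ref{theo:invariance1}, but replacing the role of Fact~\ref{fact:antisymmetry} (which used the invariances $R_i$ to produce a centered conditioned gradient) by the hypothesis~\eqref{zeromean} directly. Concretely, I would invoke Lemma~\ref{lem:duality} with $G=\{\id\}$ and with the matrix field
\[
A(x):=\sum_{i=1}^m \frac{c_i}{\poinc{\mu_{x,E_i}}+\rho^{-1}\cdot(\text{suitable})}\cdots
\]
— more precisely, the cleanest route is to first prove the dual inequality $\int (Lu)^2\,d\mu\ge \int H\nabla u\cdot\nabla u\,d\mu$ for \emph{all} $u\in\mathcal D$ (no invariance needed now, since we are not going to apply it to $u$ but only to $f$ via duality), where
\[
H(x):=D^2\Phi(x)+\sum_{i=1}^m \frac{c_i}{\poinc{\mu_{x,E_i}}}\,P_{E_i}.
\]
Wait — this is not quite right, because Fact~\ref{fact:antisymmetry} was what made $\int P_{E_i}\nabla u\,d\mu_{x,E_i}=0$ hold; without invariances that need not hold for a general $u\in\mathcal D$. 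So instead I would follow the restriction argument of Helffer recalled just before Theorem~\ref{theo:invariance1}: using the Poincar\'e inequality on each one-parameter (here, $E_i$-parameter) slice only costs us the \emph{variance} on that slice, i.e. $\int |P_{E_i}\nabla u|^2\,d\mu_{x,E_i}-|\int P_{E_i}\nabla u\,d\mu_{x,E_i}|^2\le \poinc{\mu_{x,E_i}}\int\|P_{E_i}D^2uP_{E_i}\|^2\,d\mu_{x,E_i}$.

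So the corrected plan: Apply Lemma~\ref{lem:duality} with $G=\{\id\}$ to the matrix $A(x)=\sum_i c_i\alpha_i(x)P_{E_i}$ where $\alpha_i(x):=(\poinc{\mu_{x,E_i}}^{-1}+\rho)$... no. Let me restate carefully. The conclusion we want, $\var_\mu(f)\le\int\sum_i c_i(\poinc{\mu_{x,E_i}}^{-1}+\rho)^{-1}|P_{E_i}\nabla f|^2\,d\mu$, is exactly inequality~\eqref{dual:concl} with $A^{-1}(x)$ equal to the integrand's matrix, i.e. with $A(x)=\big(\sum_i c_i(\poinc{\mu_{x,E_i}}^{-1}+\rho)^{-1}P_{E_i}\big)^{-1}$. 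By the computation at the end of the proof of Theorem~\ref{theo:invariance1} (the "bounding $H^{-1}$ from above" paragraph, with $\alpha_i=\poinc{\mu_{x,E_i}}^{-1}+\rho$), it suffices to verify the dual hypothesis~\eqref{dual:hyp} for the matrix $\sum_i c_i(\poinc{\mu_{x,E_i}}^{-1}+\rho)P_{E_i}$, i.e. to show
\[
\int (Lu)^2\,d\mu\ \ge\ \int \sum_{i=1}^m c_i\Big(\poinc{\mu_{x,E_i}}^{-1}+\rho\Big)\,|P_{E_i}\nabla u|^2\,d\mu(x)
\]
for every $u\in\mathcal D$ — but this is false for general $u$; it holds only for $u$ with centered conditioned gradient. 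The resolution is that Lemma~\ref{lem:duality}'s proof only ever tests $u$ against $f$ through $\int\nabla f\cdot\nabla u$, and in that cross term one may \emph{subtract off} the non-centered part. So what I would actually do is this.

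Here is the clean argument I would write. Fix $f\in L^2_0(\mu)$ locally Lipschitz satisfying~\eqref{zeromean}. By the density statement in the proof of Lemma~\ref{lem:duality} (with $G=\{\id\}$), choose $u_k\in\mathcal D$ with $Lu_k\to f$ in $L^2(\mu)$; it suffices to bound $\var_\mu(f)=\lim_k\big(2\int\nabla f\cdot\nabla u_k\,d\mu-\int(Lu_k)^2\,d\mu\big)$ from above along the sequence. Using~\eqref{bochner} and $D^2\Phi\ge\rho\,\mathrm{Id}$, write $\int(Lu_k)^2\,d\mu\ge\rho\int|\nabla u_k|^2\,d\mu+\int\|D^2u_k\|^2\,d\mu$, then apply~\eqref{ineqHS} to get $\int\|D^2u_k\|^2\,d\mu\ge\sum_i c_i\int\|P_{E_i}D^2u_kP_{E_i}\|^2\,d\mu$, and similarly $\rho\int|\nabla u_k|^2\,d\mu=\sum_i c_i\rho\int|P_{E_i}\nabla u_k|^2\,d\mu$. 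On each slice, the Poincar\'e inequality for $\mu_{x,E_i}$ (componentwise, as in the proof of Theorem~\ref{theo:invariance1}) gives
\[
\int\|P_{E_i}D^2u_kP_{E_i}\|^2\,d\mu_{x,E_i}\ \ge\ \poinc{\mu_{x,E_i}}^{-1}\Big(\int|P_{E_i}\nabla u_k|^2\,d\mu_{x,E_i}-\big|\textstyle\int P_{E_i}\nabla u_k\,d\mu_{x,E_i}\big|^2\Big).
\]
Combining and integrating $d\mu(x)$ via~\eqref{fubini}, with $\beta_i(x):=\poinc{\mu_{x,E_i}}^{-1}+\rho\ge0$ and $b_i(x):=\int P_{E_i}\nabla u_k\,d\mu_{x,E_i}$, I obtain
\[
\int(Lu_k)^2\,d\mu\ \ge\ \sum_{i=1}^m c_i\int\Big(\beta_i(x)\,|P_{E_i}\nabla u_k(x)|^2-\poinc{\mu_{x,E_i}}^{-1}|b_i(x)|^2\Big)\,d\mu(x).
\]
For the cross term, on each slice $\int P_{E_i}\nabla f\cdot P_{E_i}\nabla u_k\,d\mu_{x,E_i}=\int P_{E_i}\nabla f\cdot(P_{E_i}\nabla u_k-b_i(x))\,d\mu_{x,E_i}$ by~\eqref{zeromean}, so
\[
2\int\nabla f\cdot\nabla u_k\,d\mu=\sum_i c_i\int 2\,P_{E_i}\nabla f\cdot(P_{E_i}\nabla u_k-b_i)\,d\mu.
\]
Now apply the pointwise Young inequality $2\,v\cdot(w-b)\le \beta_i^{-1}|v|^2+\beta_i|w-b|^2$ on $E_i$ and expand $|w-b|^2=|w|^2-2w\cdot b+|b|^2$; the $|w|^2$ and (after a further Cauchy–Schwarz in $x$ or slice-wise, using $|b_i(x)|^2\le\int|P_{E_i}\nabla u_k|^2d\mu_{x,E_i}$) the remaining $b$-terms are dominated by what appears in the lower bound for $\int(Lu_k)^2\,d\mu$, so that $2\int\nabla f\cdot\nabla u_k-\int(Lu_k)^2\le\sum_i c_i\int\beta_i(x)^{-1}|P_{E_i}\nabla f(x)|^2\,d\mu(x)$, uniformly in $k$. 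Passing to the limit gives the claim. The main obstacle — and the only place requiring real care — is this bookkeeping of the non-centered pieces $b_i(x)$: one must check that the $-\poinc{\mu_{x,E_i}}^{-1}|b_i|^2$ coming from the Poincar\'e-on-slices lower bound and the $b$-dependent terms from the Young step combine into something $\le 0$ (a perfect square of the form $-\beta_i|P_{E_i}\nabla u_k - b_i - \beta_i^{-1}P_{E_i}\nabla f|^2$ plus manageable remainders), so that they can be dropped; I expect this to work out because $\poinc{\mu_{x,E_i}}^{-1}\le\beta_i$, but it is the step where an inattentive sign could sink the argument. Everything else — Fubini, \eqref{ineqHS}, the componentwise Poincar\'e inequality, and the final $H^{-1}$ bound — is identical to the proof of Theorem~\ref{theo:invariance1}.
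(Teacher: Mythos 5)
Your route is sound and genuinely differs from the paper's. The paper exploits the fact that $\mu$ is now log-concave (so by \eqref{KLS} the operator $L$ has a spectral gap and one can solve $f=Lu$ exactly with $u$ in the domain of $L$); it then derives, on one hand, the lower bound $\var_\mu(f)=\int(Lu)^2\,d\mu\ge\sum_i c_i\int\beta_i|P_{E_i}\nabla u-b_i|^2\,d\mu$ with $\beta_i:=\poinc{\mu_{x,E_i}}^{-1}+\rho$ and $b_i:=\int P_{E_i}\nabla u\,d\mu_{x,E_i}$ (note the paper also recenters the $\rho$-term via $\int g^2\ge\var(g)$, whereas you leave $\rho|P_{E_i}\nabla u_k|^2$ uncentered, which in fact only helps you), and on the other hand the identity $\var_\mu(f)=-\sum_i c_i\int P_{E_i}\nabla f\cdot(P_{E_i}\nabla u-b_i)\,d\mu$ from \eqref{zeromean}; a single Cauchy--Schwarz then closes the argument by the ``$V\le ab$, $V\ge a^2$, hence $V\le b^2$'' mechanism. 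You instead stick with the density scheme $Lu_k\to f$ of Lemma~\ref{lem:duality} and use pointwise Young, which forces you to track the uncentered pieces $b_i(x)$ by hand. Your worry at the end is unwarranted --- the bookkeeping does close: Young gives $2P_{E_i}\nabla f\cdot(P_{E_i}\nabla u_k-b_i)\le\beta_i^{-1}|P_{E_i}\nabla f|^2+\beta_i|P_{E_i}\nabla u_k-b_i|^2$; integrating over $\mu_{x,E_i}$ turns the last term into $\beta_i\big(\int|P_{E_i}\nabla u_k|^2\,d\mu_{x,E_i}-|b_i|^2\big)$, and subtracting your lower bound for $\int(Lu_k)^2\,d\mu$ (which carries $-\poinc{\mu_{x,E_i}}^{-1}|b_i|^2$) leaves a net coefficient $(\poinc{\mu_{x,E_i}}^{-1}-\beta_i)|b_i|^2=-\rho|b_i|^2\le 0$, so the $b_i$-terms drop out and you are left with exactly $\sum_i c_i\int\beta_i^{-1}|P_{E_i}\nabla f|^2\,d\mu$. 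Two minor points: the sign should read $\var_\mu(f)=\lim_k\big(-2\int\nabla f\cdot\nabla u_k\,d\mu-\int(Lu_k)^2\,d\mu\big)$ by \eqref{ipp1} (harmless after replacing $u_k$ by $-u_k$); and your variant has the small merit of avoiding the paper's appeal to exact solvability, which tacitly extends \eqref{bochner} to a $u$ that is only in the abstract domain of $L$ rather than in $\mathcal D$.
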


\begin{proof}
Here, we will not use Lemma~\ref{lem:duality} directly since our hypothesis does not translate into a property of $u$. So we will run again the $L^2$ duality argument and, unlike previously, use an exact solution to $f=Lu$.
This causes no problem, as the measure $\mu$ is now log-concave and \eqref{KLS} ensures that $L$ has
 a spectral gap. Hence,  for $f\in L^2_0(\mu)$ we can find a $u$ (in the domain of $L$) such that $f=Lu$.
Then, on one hand
\begin{eqnarray*}
 \var_\mu(f) &=& \int (Lu)^ 2 d\mu  \ge  \int \Big(\|D^2 u\|^2+ \rho |\nabla u|^2\Big) \, d\mu  \\
  &\ge & \sum_i c_i \int \Big(\|P_{E_i}D^2u P_{E_i}\|^2 +\rho |P_{E_i} \nabla u |^2 \Big) d\mu  \\
  &\ge&  \sum_i c_i \left[ \int  \poinc{\mu_{x,E_i}}^{-1} \left(\int \Big|P_{E_i}\nabla u -\int P_{E_i}\nabla u \, d\mu_{x,E_i}\Big|^2 d\mu_{x,E_i}\right)\,  d\mu(x)\right. \\
 && \left. +\rho \int \left( \int |P_{E_i}\nabla u -\int P_{E_i}\nabla u \,d\mu_{x,E_i}|^2 \,d\mu_{x,E_i} \right) d\mu(x)  \right]\\
  &=& \int \sum_i c_i \Big( \poinc{\mu_{x,E_i}}^{-1} +\rho \Big) \, \Big|P_{E_i}\nabla u -\int P_{E_i}\nabla u \, d\mu_{x,E_i}\Big|^2 d\mu(x),
\end{eqnarray*}
where we have used the Poincar\'e inequality for $\mu_{x,E_i}$ and the inequality $\int g^2 d\mu_{x,E_i} \ge \var_{\mu_{x,E_i}}(g)$.
On the other hand, using now the hypothesis $\int P_{E_i} \nabla f  \, d\mu_{x,E_i} = 0$ gives 
\begin{eqnarray*}
\var_\mu(f) &=&  \int f Lu \dmu = - \int \nabla f \cdot \nabla u \, d\mu = 
-\int  \sum_{i=1}^m c_i P_{E_i} \nabla f \cdot P_{E_i} \nabla u   \, d\mu \\
&=& -\int  \sum_{i=1}^m c_i P_{E_i} \nabla f \cdot \Big( P_{E_i} \nabla u-\int P_{E_i} \nabla u\, d\mu_{x,E_i} \Big)   \, d\mu(x).
\end{eqnarray*}
The claim then follows from the Cauchy-Schwarz inequality.
\end{proof}

\begin{remark}
The following fact was crucial in the proof of the $B$-conjecture~\cite{CFM}: if $D^2\Phi\ge \mathrm{Id}$,
 then any function with $\int \nabla f \, d\mu=0$ verifies $\var_\mu(f)\le \frac12 \int |\nabla f|^2 d\mu$. The novelty there was of course the improved $1/2$ factor, which can be seen as a ``second eigenvalue" estimate. 
 Applying the previous theorem with $m=1$, $E=\R^n$, $\rho=1$ recovers it.
  Indeed the  hypothesis on the Hessian  of $\Phi$ (which implies
 $\poinc{\mu}\le 1$) gives by restriction a similar inequality of the Hessian of the potential of $\mu_{x,E_i}$, which guarantees $\poinc{\mu_{x,E_i}}\le 1$. 
\end{remark}

%%%%%%%%%%%%%%%%%%%%%%%%%%%%%%%%%%%%%%%%%%%%%%%%%%

\section{Variance of the norm for log-concave measures}
\label{varnorm}

We present here a first application of the previous result to the study of the variance conjecture~\eqref{varconj} for measure with invariances. Indeed,
the function $f(x)=|x|^2$ is invariant by all isometries, so we can apply the bound of the previous section when $\mu$ (resp. $X\sim \mu$) is a log-concave measure (resp. a log-concave random vector)  of $\R^n$ have well distributed invariances. 
Introduce the quantity
$$ v(n):=\sup\big\{ \mathrm{Var}(|X|^2);\; X \mbox{ isotropic log-concave random vector in }\R^n\big\}.$$
Recall that the variance conjecture predicts that $v(n)\le c\, n$ for some universal constant $c$. The Cauchy-Schwarz inequality
immediately yields $v(n)\le  n^2$. 
Improving this trivial estimate is a difficult problem, recently solved 
by several authors, which led to the solution of the so-called central limit theorem for convex bodies. 
Up to date, the best known estimate is due to Gu\'edon and E. Milman~\cite{Guedon:2011wl}: $v(n)\le C \, n^{5/3}$.
% GM bound : $n^{5/3}= n \times (n^{1/3})^2

%Note that, by theCauchy-Schwarz inequality, we have, for every isotropic log-concave vector $X$ in an Euclidean space $(E,|\cdot|)$, the estimate
%\begin{equation}
%\label{trivialbound}
%\var(|X|^2) \le \textrm{dim}(E)^2
%\end{equation}
%Improving this trivial estimate is a difficult problem, recently solved 
%by several authors, which led to the solution of the so-called central limit theorem for convex bodies. Up to date, the best known estimate is  $\var(|X|^2) \le \textrm{dim}(E)^2 C n^{2-1/8}$. The variance conjecture remains open, expect for the case of unconditional bodies. 

\begin{theorem}\label{th:varnorm}
Let $X$ be a log-concave random vector in $\R^n$. Assume that there exist isometries $R_1,\ldots, R_m \in \O_n(X)$ and numbers $c_1,\ldots,c_m\ge 0$ such that, setting  $E_i=\fix(R_i)^\perp$, we have that $\{E_i, c_i\}$ decompose the identity in the sense of~\eqref{decompid}.   Then,
$$
\var(|X|^2)  \le  16\sum_{I=1}^m  c_i \, \dE |P_{E_i} X|^4, 
$$
and when $X$ is also isotropic,
$$ \var(|X|^2) \le C\, \sum_{i=1}^m c_i \, d_i^2 \,  \le C \, n \max_{i\le m} d_i$$
where $d_i:= \text{dim}(E_i)$. Here  $C>0$ is some numerical constant.
\end{theorem}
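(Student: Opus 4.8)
The plan is to apply Theorem~\ref{theo:invariance1} to the function $f(x)=|x|^2$, which is invariant under every isometry and in particular under the $R_i$. Since $X$ is log-concave we may take $\rho=0$ in that theorem (the potential $\Phi$ is convex, so $D^2\Phi\ge 0\cdot\mathrm{Id}$), and the hypothesis $\poinc{\mu_{x,E_i}}^{-1}+\rho\ge 0$ is automatic. The conditioned measures $\mu_{x,E_i}$ are log-concave on $E_i$ (restrictions of a log-concave density to an affine subspace), hence have finite Poincar\'e constant, so estimate~\eqref{eq:var2} applies and gives
\begin{equation*}
\var(|X|^2)\le \int\sum_{i=1}^m c_i\,\poinc{\mu_{x,E_i}}\,\bigl|P_{E_i}\nabla f(x)\bigr|^2\,d\mu(x).
\end{equation*}
Now $\nabla f(x)=2x$, so $|P_{E_i}\nabla f(x)|^2=4|P_{E_i}x|^2$, giving a bound $4\int\sum_i c_i\,\poinc{\mu_{x,E_i}}\,|P_{E_i}x|^2\,d\mu(x)$.

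First I would control $\poinc{\mu_{x,E_i}}$ pointwise using the KLS/Bobkov bound~\eqref{KLS}: since $\mu_{x,E_i}$ is log-concave on $E_i$ (which we view as a Euclidean space of dimension $d_i$), we get $\poinc{\mu_{x,E_i}}\le 4\int_{E_i}|y-a|^2\,d\mu_{x,E_i}(y)$ for any $a$, and in particular, taking $a=P_{E_i}x$ (or the barycenter), $\poinc{\mu_{x,E_i}}\le 4\int|P_{E_i}y - P_{E_i}x|^2\,d\mu_{x,E_i}(y)$. The key point, which I would exploit via Fact~\ref{fact:antisymmetry}, is that $\mu_{x,E_i}$ is centered when viewed as a measure on $E_i$ — more precisely its barycenter is $P_{E_i^\perp}x$, i.e. its $E_i$-component is zero. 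Hence I can simply write $\poinc{\mu_{x,E_i}}\le 4\int|P_{E_i}y|^2\,d\mu_{x,E_i}(y)$. Then I would plug this in and use Fubini~\eqref{fubini}:
\begin{align*}
\var(|X|^2)&\le 16\sum_{i=1}^m c_i\int\Bigl(\int |P_{E_i}y|^2\,d\mu_{x,E_i}(y)\Bigr)|P_{E_i}x|^2\,d\mu(x).
\end{align*}
Since $|P_{E_i}x|^2$ depends only on $P_{E_i}x$ and $|P_{E_i}y|^2$ on $P_{E_i}y$, and inside the inner integral $P_{E_i}x$ is ``free'' while $P_{E_i}y$ ranges... here I need to be a little careful: the inner average and the outer weight are correlated through $x$. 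The clean way is to bound $|P_{E_i}x|^2\le \int|P_{E_i}y|^2\,d\mu_{x,E_i}(y)+(\text{variance term})$ — but actually the slicker route, and the one I expect the authors use, is to note $\int\bigl(\int|P_{E_i}y|^2 d\mu_{x,E_i}\bigr)|P_{E_i}x|^2 d\mu(x)$; by Cauchy--Schwarz in $\mu_{x,E_i}$, $\int|P_{E_i}y|^2 d\mu_{x,E_i}\cdot|P_{E_i}x|^2\le \int|P_{E_i}y|^2|P_{E_i}x|^2 d\mu_{x,E_i}$ is false in general, so instead I would use the reverse H\"older/Borell inequality directly. Concretely, for the conditional log-concave measure $\mu_{x,E_i}$, the seminorm $y\mapsto|P_{E_i}y|$ on $E_i$ satisfies~\eqref{borell}, but that is not quite what is needed either.

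The cleanest honest path, and what I would actually write: bound $\poinc{\mu_{x,E_i}}\le 4\int|P_{E_i}y - P_{E_i}x|^2 d\mu_{x,E_i}(y)\le 8\int|P_{E_i}y|^2 d\mu_{x,E_i}(y)+8|P_{E_i}x|^2$ is wasteful; better to keep $\poinc{\mu_{x,E_i}}\le 4\int|P_{E_i}y - b_i(x)|^2 d\mu_{x,E_i}$ with $b_i(x)$ the $E_i$-barycenter $=0$ (by Fact~\ref{fact:antisymmetry}), and then observe that $\int\bigl(\int|P_{E_i}y|^2 d\mu_{x,E_i}(y)\bigr)|P_{E_i}x|^2 d\mu(x)$: here as $x$ ranges over $\mu$, the outer point $P_{E_i}x$ is itself distributed, conditionally on $P_{E_i^\perp}x$, according to $\mu_{x,E_i}$ — that is exactly~\eqref{fubini}. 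So the double integral equals $\int\bigl(\int|P_{E_i}y|^2 d\mu_{x,E_i}(y)\bigr)\bigl(\int|P_{E_i}z|^2 d\mu_{x,E_i}(z)\bigr)d\mu(x)=\int\bigl(\int|P_{E_i}y|^2 d\mu_{x,E_i}(y)\bigr)^2 d\mu(x)\le \int\int|P_{E_i}y|^4 d\mu_{x,E_i}(y)\,d\mu(x)=\int|P_{E_i}y|^4 d\mu(y)=\dE|P_{E_i}X|^4$, using Jensen/Cauchy--Schwarz and then Fubini again. This yields $\var(|X|^2)\le 16\sum_i c_i\,\dE|P_{E_i}X|^4$, the first claimed bound.

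For the isotropic case I would apply Borell's reverse H\"older inequality~\eqref{borell} to the even seminorm $H(y)=|P_{E_i}y|$ with respect to the log-concave measure $\mu$ (here I use that $P_{E_i}$ is a fixed linear projection so $|P_{E_i}\cdot|$ is a genuine seminorm on $\R^n$): $\dE|P_{E_i}X|^4\le c\,(\dE|P_{E_i}X|^2)^2$. When $X$ is isotropic, $\dE|P_{E_i}X|^2=\mathrm{Tr}(P_{E_i}\,\mathrm{Cov}(X)\,P_{E_i})=\mathrm{Tr}(P_{E_i})=d_i$, so $\dE|P_{E_i}X|^4\le c\,d_i^2$, giving $\var(|X|^2)\le 16c\sum_i c_i d_i^2=:C\sum_i c_i d_i^2$. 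Finally, $\sum_i c_i d_i^2\le (\max_i d_i)\sum_i c_i d_i=(\max_i d_i)\cdot n$ by the trace identity $\sum_i c_i\,\mathrm{dim}(E_i)=n$ noted after~\eqref{decompid}. This completes all three inequalities. The main obstacle, as the discussion above shows, is organizing the Fubini step so that the coupling between the outer weight $|P_{E_i}x|^2$ and the conditional second moment is handled correctly — recognizing that the outer integration over $x$ literally re-samples $P_{E_i}x$ from $\mu_{x,E_i}$ is what turns the product of two conditional second moments into a fourth moment and makes the estimate tight; everything else is the straightforward machinery of Theorem~\ref{theo:invariance1}, the KLS bound~\eqref{KLS}, and Borell's inequality~\eqref{borell}.
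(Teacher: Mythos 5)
Your argument is correct and follows essentially the same route as the paper: apply Theorem~\ref{theo:invariance1} with $\rho=0$ to $f(x)=|x|^2$, bound $\poinc{\mu_{x,E_i}}$ by the KLS/Bobkov estimate~\eqref{KLS}, recognize via Fubini/conditioning that the resulting double integral is $\int\bigl(\int|P_{E_i}y|^2\,d\mu_{x,E_i}\bigr)^2 d\mu(x)$ and bound it by $\dE|P_{E_i}X|^4$ via Jensen, and finish with Borell's reverse H\"older inequality~\eqref{borell} and the trace identity $\sum_i c_i d_i=n$. The only cosmetic difference is that you invoke Fact~\ref{fact:antisymmetry} to note that $\mu_{x,E_i}$ is centered on $E_i$, whereas the paper simply takes $a=0$ in~\eqref{KLS} (an upper bound that is valid without centering); the exploratory detours in your write-up could be pruned since they eventually converge to exactly the paper's manipulation.
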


\begin{proof}
Let us denote $F_i=E_i^\perp$.  Theorem~\ref{theo:invariance1} applied to $f(x) = |x|^2$ and $X\sim \mu$ gives
$$ \var_\mu(f)\le 4\int \sum_i c_i \poinc{\mu_{x,E_i}} \, |P_{E_i}x|^2 d\mu(x).$$
Next, by the bound~\eqref{KLS} applied to the log-concave measure $\mu_{x,E_i}$ (for fixed $i,x$), we have
%VALEUR DE $\kappa$?
%Switching to probabilistic notation, we obtain that
 $$\poinc{\mu_{x,E_i}} \le 4 \int_{E_i} |y|^2d\mu_{x,E_i}(y)= 4\dE\left[|P_{E_i} X| \, \Big| \, P_{F_i}X=  P_{F_i}x \right].$$
Conditioning on $P_{E_i} X$ and applying Cauchy-Schwarz inequality, yields
\begin{eqnarray*}
\var(|X|^2) &\le& 16\sum_{i=1}^m c_i \dE\left[ \dE\Big[ | P_{E_i} X|^2\,  \Big| P_{F_i} X \Big]\,  \big|P_{E_i} X \big|^2 \right] \\
&\le & 16\sum_{i=1}^m c_i \dE\left[ \dE\Big[ | P_{E_i}X|^4\,  \Big| P_{F_i} X\Big]\right] 
=c\sum_{i=1}^m c_i \dE\left[ | P_{E_i}X|^4 \right] .
\end{eqnarray*}
The second inequality follows from Borell's lemma~\eqref{borell} and $\dE |P_EX|^2 = \text{dim}(E)$ when $X$ is isotropic.
The last one is derived from the relation $n=\sum_i c_i d_i$.
\end{proof}

Actually, one can relax the hypothesis on the structure of the isometries, by allowing invariant directions.

\begin{theorem}
\label{theo:var}
Let $X$ be an isotropic log-concave random vector in $\R^n$. Assume that there exist isometries $U_1,\ldots, U_m\in \O_n(X)$ and numbers $c_1,\ldots,c_m\ge 0$ such that
$$\sum_{i=1}^m c_i \, P_{\mathrm{Fix}(U_i)^\bot} =P_E,$$
where $E=\big( \cap_i \mathrm{Fix}(U_i)\big)^\bot$. Set $d= \mathrm{dim} \big(\cap_i \mathrm{Fix}(U_i)\big)$ and 
$d_i=\mathrm{codim}\big(\mathrm{Fix}(U_i)\big)$.
Then $$\mathrm{Var}(|X|^2)\le 2 v(d)+ c n \max_i d_i.$$
In particular, if $d\le \alpha \sqrt{n}$ and for all $i$, $d_i\le \alpha$, then $\mathrm{Var}(|X|^2)\le C(\alpha)n$.
\end{theorem}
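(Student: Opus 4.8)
The idea is to split the space $\R^n = F \oplus E$ where $F := \cap_i \mathrm{Fix}(U_i)$ (of dimension $d$) and $E = F^\perp$ (of dimension $n-d$), and to treat the $F$-part by brute force and the $E$-part by the variance inequality of Theorem~\ref{theo:invariance1}. Write $x = x_F + x_E$ with $x_F = P_F x$, $x_E = P_E x$, so that $|x|^2 = |x_F|^2 + |x_E|^2$. Using $\var(A+B) \le 2\var(A) + 2\var(B)$, the plan is to estimate $\var(|P_F X|^2)$ and $\var(|P_E X|^2)$ separately. For the first term, note that $P_F X$ is (up to the isometric identification $F \cong \R^d$) an isotropic log-concave vector in $\R^d$, since marginals of isotropic log-concave measures are isotropic and log-concave; hence $\var(|P_F X|^2) \le v(d)$ by definition of $v$. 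This gives the $2v(d)$ contribution.

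For the second term $\var(|P_E X|^2)$, the function $f(x) = |P_E x|^2$ is $U_i$-invariant for each $i$: indeed $U_i$ preserves $F$ and $E = \mathrm{Fix}(U_i)^\perp \oplus (\text{part of }F)$... more precisely $U_i$ is an isometry fixing $F$ pointwise and acting on $\mathrm{Fix}(U_i)^\perp$, and since $\mathrm{Fix}(U_i)^\perp \subset E$, we have $|P_E(U_i x)|^2 = |P_E x|^2$. So $f$ is $\{U_i\}$-invariant. Now apply Theorem~\ref{theo:invariance1} with $\rho = 0$ (log-concavity gives $D^2\Phi \ge 0$, and $\poinc{\mu_{x,E_i}}^{-1} \ge 0$), where $E_i := \mathrm{Fix}(U_i)^\perp$ and the $c_i$ are as given — the hypothesis $\sum c_i P_{E_i} = P_E$ is exactly a decomposition of the identity on $E$, and one checks Theorem~\ref{theo:invariance1} applies verbatim to measures whose support-relevant directions are confined to $E$ (or simply run the argument of that theorem restricted to $E$). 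Since $\nabla f(x) = 2 P_E x$ and $P_{E_i} \nabla f = 2 P_{E_i} x$, the conclusion \eqref{eq:var2} reads
$$\var(|P_E X|^2) \le 4 \sum_{i=1}^m c_i\, \dE\big[\poinc{\mu_{x,E_i}}\,|P_{E_i} X|^2\big].$$
Then, exactly as in the proof of Theorem~\ref{th:varnorm}, bound $\poinc{\mu_{x,E_i}} \le 4\,\dE[|P_{E_i}X|^2 \mid P_{\mathrm{Fix}(U_i)}X]$ via \eqref{KLS}, condition on $P_{E_i}X$, apply Cauchy--Schwarz and Borell's reverse Hölder inequality \eqref{borell}, and use $\dE|P_{E_i}X|^2 = d_i$ (isotropy) together with $\sum_i c_i d_i = \mathrm{tr}(P_E) = n - d \le n$, to get $\var(|P_E X|^2) \le c \sum_i c_i d_i^2 \le c n \max_i d_i$.

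Combining, $\var(|X|^2) \le 2 v(d) + 2 c n \max_i d_i$, which is the claimed bound (absorbing the factor $2$ into $c$). For the ``in particular'' clause, if $d \le \alpha\sqrt n$ then $v(d) \le d^2 \le \alpha^2 n$ using the trivial bound $v(k)\le k^2$ recalled before Theorem~\ref{th:varnorm}, and $\max_i d_i \le \alpha$ gives $cn\max_i d_i \le c\alpha n$; hence $\var(|X|^2) \le (2\alpha^2 + c\alpha)n =: C(\alpha) n$.

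The main obstacle I anticipate is the bookkeeping around applying Theorem~\ref{theo:invariance1} when the $E_i$ do not span all of $\R^n$ but only $E$: one must either verify that the proof of that theorem (the Hilbert--Schmidt inequality \eqref{ineqHS} and the Fubini/Poincaré restriction step) only ever uses the $c_i P_{E_i}$ summing to the projection onto the relevant subspace — which it does, since the inequality $\|D^2 u\|^2 \ge \sum c_i \|P_{E_i} D^2 u P_{E_i}\|^2$ holds whenever $\sum c_i P_{E_i} \le \mathrm{Id}$ — or else first reduce to an isotropic log-concave measure on $E$ by conditioning. The other point requiring a little care is the claim that $P_F X$ is isotropic and log-concave in $\R^d$; this is standard (Prékopa--Leindler for log-concavity of marginals, and $\mathrm{Cov}(P_F X) = P_F \mathrm{Cov}(X) P_F|_F = \id_F$), so it should not cause real trouble.
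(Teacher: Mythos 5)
Your proposal is correct and follows essentially the same route as the paper: split $\R^n = E \oplus E^\bot$, apply $\var(A+B)\le 2\var(A)+2\var(B)$, bound the $E^\bot$-part by the trivial estimate $v(d)\le d^2$, and use the symmetry machinery on the $E$-part. The one place you do extra work, and correctly flag as a possible obstacle, is applying Theorem~\ref{theo:invariance1} with $\sum c_i P_{E_i}=P_E<\mathrm{Id}$; the paper sidesteps this entirely by noting that $Y=P_EX$, viewed as a vector in $E\cong\R^{n-d}$, is itself isotropic and log-concave and is invariant under the restrictions $U_i|_E$, whose fixed-point spaces in $E$ now have trivial intersection and whose orthogonal complements decompose $\id_E$ — so Theorem~\ref{th:varnorm} applies verbatim to $Y$, giving $\var(|Y|^2)\le c(n-d)\max_i d_i$ with no modification needed.
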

\begin{proof}
Write $X=(Y,Z)\in E\times E^\bot$. Then $Y$ and $Z$ are log-concave isotropic random vectors in $E$ and $E^\bot$.
Since $E^\bot=\cap_i \mathrm{Fix}(U_i)$, $X\sim U_iX= (U_iY,Z)$ so $U_iY\sim Y$. Since 
$E\cap   \cap_i \mathrm{Fix}(U_i)=\{0\}$, we may apply the previous statement to $Y\in E$ and get 
$\mathrm{Var}(|Y|^2)\le  c (n-d) \max_i d_i.$ For $Z\in E^\bot$ we apply the trivial estimate 
$\mathrm{Var}(|Z|^2)\le v(d) \le d^2$. Eventually $\mathrm{Var}(|X|^2)= \mathrm{Var}(|Y|^2+|Z|^2)\le 
2 \mathrm{Var}(|Y|^2)+ 2 \mathrm{Var}(|Z|^2).$
\end{proof}

The preceding results are particularly useful in the case we have nice invariances by reflections, since $d_i=1$ then. Indeed, if $\fix(\mathcal R(X))=\{0\}$, then we can find a decomposition of the identity by directions orthogonal to the hyperplane symmetries, and therefore we get the desired bound $\var(|X|^2)\le c \, n$. Actually, a little more is true, in the spirit of the previous result.
%Using basic fact about reflection groups gives the following statement. 
%If $H$ is a linear hyperplane, we denote by $S_H$ the corresponding reflection.
\begin{theorem}
Let $X$ be an isotropic log-concave random vector in $\R^n$. Assume that there exist reflections
$S_{H_1},\ldots, S_{H_m} \in \O_n(X)$ with $\mathrm{dim} \big( \bigcap_i H_i)\le \alpha \sqrt n$.
Then  $\mathrm{Var}(|X|^2)\le C(\alpha)n$.

In particular, if $\bigcap_i H_i= \{0\}$, the vector $X$ verifies the variance conjecture.

\end{theorem}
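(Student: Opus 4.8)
The plan is to reduce this statement to Theorem~\ref{theo:var} by repackaging the reflections so that their fixed-point subspaces play the role of the $\mathrm{Fix}(U_i)$ there. The key observation is that if we let $V := \bigcap_i H_i$ and $E := V^\perp$, then $\mathrm{dim}(V) \le \alpha\sqrt{n}$ by hypothesis. We must produce isometries $U_1,\ldots,U_k \in \O_n(X)$, invariant on $V$ (i.e. $V \subset \mathrm{Fix}(U_j)$ for each $j$), together with positive weights summing in the sense that $\sum_j c_j P_{\mathrm{Fix}(U_j)^\perp} = P_E$, and with all $\mathrm{codim}(\mathrm{Fix}(U_j)) = d_j$ bounded by a constant depending only on $\alpha$. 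Given such a family, Theorem~\ref{theo:var} with $d = \mathrm{dim}(V) \le \alpha\sqrt{n}$ and $\max_j d_j \le \mathrm{const}(\alpha)$ yields exactly $\mathrm{Var}(|X|^2) \le C(\alpha) n$.

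First I would restrict attention to the subgroup $G = \langle S_{H_1},\ldots,S_{H_m}\rangle \subset \O_n(X)$; this is a finite group (a finite reflection group) since it is generated by finitely many reflections and is discrete, and crucially $\mathrm{Fix}(G) = \bigcap_i H_i = V$ (an element of $G$ fixes $V$ pointwise because each generator does, and the reverse inclusion $\mathrm{Fix}(G) \subset V$ is clear). Now the group $G$ acts on $\R^n$ fixing $V$ pointwise and thus acts on $E = V^\perp$ as a finite reflection group with trivial fixed-point subspace inside $E$. The point is that $E$ decomposes into $G$-irreducible pieces; on each such piece, the orthogonal projection commutes with the $G$-action and, by Schur's lemma / averaging over $G$ (precisely as in the group-averaging argument used in Lemma~\ref{lem:duality} and Fact~\ref{fact:antisymmetry}, and as discussed in the appendix the paper refers to), one produces a decomposition of $P_E$ of the form $\sum_j c_j P_{\mathrm{Fix}(U_j)^\perp}$ with the $U_j$ reflections from $G$ — indeed one can symmetrize the elementary decomposition $\sum_{\text{reflections } S_H \in G} P_{H^\perp}$ by averaging under $G$ to land exactly on a multiple of $P_E$ on each irreducible block. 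Since all the $U_j$ are reflections, $d_j = 1$ for every $j$, which is far better than any $\mathrm{const}(\alpha)$ bound we need.

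With this data in hand I would invoke Theorem~\ref{theo:var}: its hypothesis $\sum_j c_j P_{\mathrm{Fix}(U_j)^\perp} = P_E$ holds by construction, $E^\perp = V = \bigcap_i H_i$ has dimension $d \le \alpha\sqrt n$, and $d_j = 1 \le \alpha$ for all $j$ (assuming $\alpha \ge 1$, which we may since otherwise $V = \{0\}$ and we are in the cleanest case). Theorem~\ref{theo:var} then gives $\mathrm{Var}(|X|^2) \le 2v(d) + cn\max_j d_j \le 2d^2 + cn \le 2\alpha^2 n + cn = C(\alpha) n$, which is the claim. The special case $\bigcap_i H_i = \{0\}$ is then immediate: $V = \{0\}$, $E = \R^n$, $d = 0$, $v(0) = 0$, and we get $\mathrm{Var}(|X|^2) \le cn$, i.e. $X$ satisfies the variance conjecture.

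\textbf{Main obstacle.} The one genuinely non-routine step is verifying that a finite reflection group $G$ with $\mathrm{Fix}(G) \cap E = \{0\}$ on $E$ admits a decomposition of $P_E$ as a positive combination $\sum_j c_j P_{\mathrm{Fix}(U_j)^\perp}$ with the $U_j$ among its reflections. This is where the invariance structure must be used carefully: one reduces to $G$-irreducible summands of $E$, on each of which averaging $\sum_{S_H \in G} P_{H^\perp}$ under the $G$-action gives a $G$-equivariant positive operator, hence (Schur) a positive scalar times the identity of that block; patching the blocks together and clearing the scalars produces the $c_j$. This is precisely the kind of statement the paper promises to address in its Appendix on reflection groups, so in the write-up I would either cite that Appendix or spell out the short Schur-lemma argument. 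Everything else is bookkeeping and an application of the already-proved Theorem~\ref{theo:var}.
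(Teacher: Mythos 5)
Your route is exactly the paper's: pass to the reflection group $G$ generated by the $S_{H_i}$, use the appendix (Lemma~\ref{lem:dec-reflex}) to decompose $P_{\mathrm{Fix}(G)^\perp}$ as a positive combination of rank-one projections onto lines normal to reflecting hyperplanes in $G$, and then apply Theorem~\ref{theo:var} with $d_i=1$. One small inaccuracy to fix: a group generated by finitely many reflections need not be finite or discrete (two reflections meeting at an irrational angle generate an infinite group), so you should instead pass to the closure of $G$ in $\mathcal O_n$ — which still has $\mathrm{Fix}=\bigcap_i H_i$ and is exactly the class the paper's Lemma~\ref{lem:dec-reflex} covers.
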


\begin{proof}
This relies on basic fact about reflection groups recalled in the appendix. More precisely, the result follows from Lemma~\ref{lem:dec-reflex} and from the previous Theorem.
\end{proof}

The previous statement gives that every isotropic log-concave distribution which has the invariance of the simplex satisfies the variance conjecture.

Let us present another application, which does not involve  reflections. 
Consider $S_{p}^d$ the Schatten class, i.e. the space of $d\times d$ real matrices equipped with the norm $\|A\|_p^p  = \sum_{i=1}^d \lambda_i(A)^p$ for
 $p\in [1,+\infty)$, where the $\lambda_i(A)$ denote the singular values of $A$, i.e. the eingenvalues of $A^\ast A$.
Consider the linear applications $R_{i}$ which flip the signs of all the entries in the $i$-th row of a matrix.
 Clearly $\|R_i A \|_p=\| A\|_p$. Moreover $\mathrm{Fix}(R_i)$ is of dimension $d$ and $\sum_{i=1}^d P_{\mathrm{Fix}(R_i)^\bot}=\mathrm {Id}$ (see the Appendix
 for more details).  Let $n=d^2$ be the dimension and let  $B_{p}^d\subset \R^n$ be the unit ball for the  Schatten norm $\|\cdot\|_p$. Consider a suitable dilation $\lambda B_p^d$ ensuring that the random vector $X_{p,n}\sim \mu_{\lambda B_p^d}$ uniformly distributed on $\lambda B_p^d$ is isotropic. Theorem~\ref{th:varnorm} then gives that
$$\var(|X_{n,p}|^2) \le c \, n  d= c \, n^{3/2}$$
which is slightly better than the general bound of Gu\'edon and E. Milman.
%, but does not answer the variance conjecture for the Schatten classes.

%%%%%%%%%%%%%%%%%%%%%%%%%%%%%%%%%%%%%%%%%%%%%%%%%%%%%%%%%%%
%%%%%%%%%%%%%%%%%%%%%%%%%%%%%%%%%%%%%%%%%%%%%%%%%%%%%%%%%%%

\section{Invariant measures and general functions}
\label{sec:symmetrisation}
The goal of this section is to get rid of the invariance hypotheses  for the functions in Theorem~\ref{theo:invariance1}. 
We will prove that when some  group of isometries $G$ leaving the measure invariant  has nice spectral properties, 
%(as it is in the example we have in mind), 
then indeed  Theorem~\ref{theo:invariance1} is valid for all functions. For this, we will average (or symmetrize) through the group $G$. This method is inspired in part by an argument of B.~Fleury \cite{Fleury} who treated the case of unconditional measures.
Let us fix the setting.

\begin{setting} \label{setting}
Let $\mathcal G = \{R_1, \ldots , R_m\}$ be a set of $m$ isometries and let $G$ be the (compact) group they generate, equipped with its (normalized) Haar measure $\gamma$. We assume that 
\begin{itemize}
\item The set of generators $\mathcal G$ is stable under conjugacy in $G$ (i.e. $g\mathcal G g^{-1} = \mathcal G,\ \forall g\in G$).

\item We consider the  Cayley graph associated to these generators and we suppose that some Poincar\'e inequality holds on $G$ for some (generalized) discrete gradient. More precisely, we assume that there exists positive numbers $d_i$ such that: 
%for every $f:G\to \R$,
\begin{equation}
\label{poincG}
\forall f:G\to \R, \quad \var_\gamma(f) \le \mathcal  E(f,f):= \int \sum_{i=1}^m d_i\, \big[ f(gR_i) - f(g)\big]^2 \; d\gamma(g).
\end{equation}
We impose the following structural condition on the weights $d_i$:
\begin{equation}
\label{structcond}
\forall i,j\le m, \, \forall g\in G: \quad \ R_i=g\, R_j g^{-1} \Longrightarrow d_i=d_j 
\end{equation}
This condition is obviously satisfied if we take all the $d_i$ to be equal to the same constant $c>0$, and then the best constant $c$ for which~\eqref{poincG} holds is known to be the Poincar\'e constant associated to $(\mathcal G, G)$ which we denote by $\poinc{\mathcal G}$.
\end{itemize}
\end{setting}

%The reason we consider the possibility of different weights $d_i$ satisfying~\eqref{structcond} instead of just $\poinc{\mathcal G}$ is that we want to take advantage 
% of a possible product structure of the group $G$, each block coming with its own, possibly distinct, Poincar\'e constant (see below).

A crucial observation that follows  from the fact that $\mathcal G$ is stable under conjugacy, is that $G$ acts as a permutation not only on $\mathcal G$, but also on the set of fixed subspaces $E_i:= \fix(R_i)^\perp$, in the sense that for every $g\in G$ there exists a permutation $\tau$ of $\{1, \ldots , m\}$ such that
$$\big(g E_1 , gE_2, \ldots , g E_m\big)=\big(E_{\tau(1)}, E_{\tau(2)}, \ldots, E_{\tau(n)}\big).$$
To check this, just note that $\mathrm{Fix}(gR_ig^{-1})^ \bot= g (\mathrm{Fix}(R_i)^\bot)=gE_i$. 
%By the way, note that, in view of~\eqref{structcond}, we also have that  $g E_i = E_j \Rightarrow d_i=d_j$.

Let us first state a very simple but crucial observation:
\begin{fact}\label{meanzero}
Let  $u:\R^n\to \R$ be an arbitrary function and $R$ be an isometry leaving a measure $\mu$ invariant. Then, setting  $E= \fix(R)^\perp$, we have that for  every  $x\in \R^n$,
$$\int \big[u\circ R - u\big]\, d\mu_{x,E}  \; = \; 0.$$
\end{fact}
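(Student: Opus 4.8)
The plan is to reduce the statement for an arbitrary function $u$ to the case of $R$-invariant functions by symmetrization, using the crucial property of the fixed-point subspace $E=\fix(R)^\perp$ already exploited in Fact~\ref{fact:antisymmetry}. Concretely, I would decompose $u = u^+ + u^-$, where $u^+ := \tfrac12(u + u\circ R)$ is $R$-invariant and $u^- := \tfrac12(u - u\circ R)$ is $R$-anti-invariant (i.e. $u^-\circ R = -u^-$). Then $u\circ R - u = (u^+\circ R + u^-\circ R) - (u^+ + u^-) = -2u^-$, so the claim is equivalent to showing $\int u^-\, d\mu_{x,E} = 0$ for every $x$.

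The key point is then that an $R$-anti-invariant function has zero integral against $\mu_{x,E}$, and this is essentially the same symmetry argument as in the proof of Fact~\ref{fact:antisymmetry}. First I would observe that, since $R$ is orthogonal and $E=\fix(R)^\perp$ is $R$-invariant, one has $RE=E$ and $P_{E^\perp}R = RP_{E^\perp}$; hence the slice $x+E$ is preserved by $R$ (more precisely $R$ maps $P_{E^\perp}x + E$ to itself because $R$ fixes $P_{E^\perp}x$ — here one uses that $P_{E^\perp}x\in\fix(R)$). Moreover the density $\rho = e^{-\Phi}$ of $\mu$ is $R$-invariant, and the restriction of Lebesgue measure to $E$ is invariant under the orthogonal map $R|_E$, so $\mu_{x,E}$ is $R$-invariant as a measure on $E$. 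Therefore, by the change of variables $y\mapsto Ry$,
\begin{equation*}
\int u^-(y)\, d\mu_{x,E}(y) = \int u^-(Ry)\, d\mu_{x,E}(y) = -\int u^-(y)\, d\mu_{x,E}(y),
\end{equation*}
which forces the integral to vanish. Combining with the first paragraph gives $\int [u\circ R - u]\, d\mu_{x,E} = -2\int u^-\, d\mu_{x,E} = 0$.

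I do not expect any serious obstacle here; the statement is genuinely elementary. The one point that deserves a careful sentence is the claim that $\mu_{x,E}$ is $R$-invariant on $E$: one must note that $\mu_{x,E}$ is defined via $\Phi(\,\cdot\, + P_{E^\perp}x)$ and that $R$ fixes $P_{E^\perp}x$ and commutes with $P_{E^\perp}$ (because $R$ is normal and leaves $E$ invariant), so that for $y\in E$, $\Phi(Ry + P_{E^\perp}x) = \Phi(R(y + P_{E^\perp}x)) = \Phi(y + P_{E^\perp}x)$; together with the fact that $R|_E$ is an isometry of $E$ preserving $dy$, this yields the invariance of $\mu_{x,E}$. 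Everything else is the one-line anti-symmetry computation above.
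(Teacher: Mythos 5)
Your key observation is exactly the paper's: $\mu_{x,E}$ is $R$-invariant as a measure on $E$, because $RE=E$, $R$ fixes $P_{E^\perp}x\in\fix(R)$, $\rho\circ R=\rho$, and $R|_E$ is an isometry of $E$ preserving Lebesgue measure. The paper proves this via the same change of variables $z=Ry$ and applies it directly to $u$, concluding $\int u\circ R\,d\mu_{x,E}=\int u\,d\mu_{x,E}$.

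However, your symmetrization step contains a genuine error for non-involutive $R$. You claim $u^-:=\tfrac12(u-u\circ R)$ satisfies $u^-\circ R=-u^-$, but
$$u^-(Rx)=\tfrac12\bigl(u(Rx)-u(R^2x)\bigr),\qquad -u^-(x)=\tfrac12\bigl(u(Rx)-u(x)\bigr),$$
and these agree only when $u\circ R^2=u$, i.e.\ essentially only when $R^2=\id$. The same issue affects $u^+\circ R=u^+$. This matters here: Fact~\ref{meanzero} is stated for an arbitrary isometry $R$ and is genuinely invoked for rotations (e.g.\ the rotations of a regular $k$-gon at the end of Section~\ref{sec:symmetrisation}, or in Lemma~\ref{lem:dec-direct}), for which $R^2\neq\id$. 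The fix is trivial and is what the paper does: discard the decomposition altogether and apply the $R$-invariance of $\mu_{x,E}$ directly to $u$, which gives $\int u\circ R\,d\mu_{x,E}=\int u\,d\mu_{x,E}$ in one step.
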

\begin{proof}
Set $F=\fix(R)$ and let $\rho$ be the density of $\mu$. As for Fact~\ref{fact:antisymmetry}, we use that $RE = E$. For every fixed $x\in \R^n$ we have, since $\rho\circ R= \rho$,
$$\int_{E} [(u\circ R)\cdot \rho](P_Fx+y) \, dy = \int_E   [(u\circ R)\cdot (\rho\circ R)](P_{F} x + y) \, dy = \int_{E} [u\cdot \rho] (RP_{F} x + z) \, dz $$
where we used that $R$ is an isometry of $E$ and performed the change of variable $z=Ry$.
To conclude, use that $R P_{F} = P_{F}$.
% we conclude that the latter integral is equal to $\int_{E} u \, \rho(P_{F} x + y) \, dy$, as stated.
\end{proof}

We are ready to start the extension of variance estimates from invariant to non-invariant functions.
The first step is to estimate from above the variance of a general function by the variance of the $G$-invariant function 
obtained by averaging.
\begin{proposition}\label{prop:var}
Under the Setting~\ref{setting} with $E_i:=\fix(R_i)^\perp$ we have, for every locally Lipschitz function $f$ on $\R^n$ 
 and $F:= \int f\circ g \, d\gamma(g)$,
%Then
$$\var_\mu(f) \le \var_\mu(F)+4 \int \sum_i d_i\, \poinc{\mu_{x,E_i}} \, |P_{E_i}\nabla f(x)|^2 d\mu(x).$$
\end{proposition}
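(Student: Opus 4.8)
The plan is to decompose $f = F + (f-F)$ where $F = \int f\circ g\, d\gamma(g)$ is the $G$-average, and to estimate separately the contributions of $F$ and of the ``fluctuation'' $f - F$. Since $F$ is $G$-invariant by construction (the Haar measure $\gamma$ is bi-invariant), the first piece $\var_\mu(F)$ is exactly the first term on the right-hand side and needs no further work. The issue is thus to bound the cross term and the variance of $f-F$; the cleanest route is to note that $\var_\mu(f) \le \var_\mu(F) + \int (f-F)^2\, d\mu$ is \emph{false} in general, so instead I would work with the elementary identity $\var_\mu(f) = \var_\mu(F) + 2\int (F - \int F d\mu)(f-F)\, d\mu + \var_\mu(f-F)$; but in fact the sharper and simpler approach is to use that $F$ is the orthogonal projection of $f$ onto the subspace of $G$-invariant functions in $L^2(\mu)$ (orthogonality holds because $\mu$ is $G$-invariant and averaging against $\gamma$ is a self-adjoint projection on $L^2(\mu)$). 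Hence $\var_\mu(f) = \var_\mu(F) + \|f - F\|_{L^2(\mu)}^2$, and it remains to show
$$\|f - F\|_{L^2(\mu)}^2 \le 4 \int \sum_i d_i\, \poinc{\mu_{x,E_i}}\, |P_{E_i}\nabla f(x)|^2\, d\mu(x).$$

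To do this I would apply the discrete Poincar\'e inequality \eqref{poincG} fibrewise. For each fixed $x \in \R^n$, consider the function on $G$ given by $\varphi_x(g) := \int (f\circ g)\, d\mu_{x,?}$ — more precisely, the natural move is: for fixed $x$, $\int (f-F)^2 d\mu = \int \var_{\gamma}\big(g \mapsto f(g\cdot)\big)$ evaluated appropriately, but since $f - F$ is not itself of the form ``function on $G$'', I would instead integrate the pointwise bound obtained from \eqref{poincG} applied to $g \mapsto f(g x)$ for each $x$: this gives $\int |f(gx) - F(x)|^2 d\gamma(g) \le \int \sum_i d_i |f(g R_i x) - f(gx)|^2 d\gamma(g)$, and integrating in $x$ against $\mu$ (using $G$-invariance of $\mu$ to relabel $gx \to x$) yields
$$\|f-F\|_{L^2(\mu)}^2 \le \int \sum_i d_i\, |f(R_i x) - f(x)|^2\, d\mu(x).$$
Now the key point is Fact~\ref{meanzero}: for each $i$ and each $x$, the function $y \mapsto f(R_i y) - f(y)$ has zero mean with respect to $\mu_{x,E_i}$. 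Therefore I can apply the Poincar\'e inequality for $\mu_{x,E_i}$ to it:
$$\int \big(f\circ R_i - f\big)^2\, d\mu_{x,E_i} \le \poinc{\mu_{x,E_i}} \int \big|P_{E_i}\nabla(f\circ R_i - f)\big|^2\, d\mu_{x,E_i}.$$

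The final step is to control $\big|P_{E_i}\nabla(f\circ R_i - f)\big|$ pointwise by $2|P_{E_i}\nabla f|$. Here one uses that $R_i$ is an isometry with $R_i E_i = E_i$ and $R_i|_{\fix(R_i)} = \id$, so $\nabla(f\circ R_i)(y) = R_i^{*}\nabla f(R_i y)$ and $P_{E_i} R_i^{*} = R_i^{*} P_{E_i}$ (as in Fact~\ref{fact:antisymmetry}); since $R_i$ preserves norms, $|P_{E_i}\nabla(f\circ R_i)(y)| = |P_{E_i}\nabla f(R_i y)|$, and the triangle inequality gives $|P_{E_i}\nabla(f\circ R_i - f)(y)| \le |P_{E_i}\nabla f(R_i y)| + |P_{E_i}\nabla f(y)|$, whence $|P_{E_i}\nabla(f\circ R_i - f)|^2 \le 2|P_{E_i}\nabla f(R_i y)|^2 + 2|P_{E_i}\nabla f(y)|^2$. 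Integrating over $\mu_{x,E_i}$, then using Fubini \eqref{fubini} to pass from $\int(\cdot)\,d\mu_{x,E_i}\,d\mu(x)$ to $\int(\cdot)\,d\mu$, and the $\mu_{x,E_i}$-invariance under $R_i$ to merge the two terms, produces the factor $4$ and the claimed bound. I expect the main obstacle to be the bookkeeping in this last step: one must be careful that $\poinc{\mu_{x,E_i}}$ is constant on the fibre $x + E_i$ (true, since $\mu_{x,E_i}$ depends only on $P_{E_i^\perp}x$ and $R_i$ acts trivially there), so that it can be pulled out of the inner integral before relabelling, and that the structural condition on the $d_i$ plays no role here beyond having been used to set up \eqref{poincG} — the only genuinely delicate point is confirming that the orthogonality $\var_\mu(f) = \var_\mu(F) + \|f-F\|^2$ and the fibrewise zero-mean property interact correctly.
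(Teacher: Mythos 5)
Your proposal follows essentially the same route as the paper: an orthogonal decomposition $\var_\mu(f)=\var_\mu(F)+\|f-F\|^2_{L^2(\mu)}$ (the paper phrases this probabilistically via conditioning on the independent random rotation $\Gamma$, which is the same statement), the discrete Poincar\'e inequality~\eqref{poincG} applied to $g\mapsto f(gx)$, the fibrewise zero-mean property from Fact~\ref{meanzero}, the Poincar\'e inequality for $\mu_{x,E_i}$, and the gradient bound with a factor $2\cdot 2 = 4$. The two proofs differ only in the order of operations: you perform the relabelling $gx\to x$ immediately after applying the discrete Poincar\'e inequality, whereas the paper keeps the auxiliary function $h=f\circ g$ through the fibre Poincar\'e step and does the relabelling at the very end.

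However, there is a genuine gap in your relabelling step. You claim that ``using $G$-invariance of $\mu$ to relabel $gx\to x$'' yields
$$\|f-F\|^2_{L^2(\mu)} \le \int \sum_i d_i\, |f(R_ix)-f(x)|^2\,d\mu(x),$$
and you explicitly assert that the structural condition on the $d_i$ ``plays no role here beyond having been used to set up~\eqref{poincG}.'' This is incorrect. Under the change of variables $y=gx$, the term $(f(gR_ix)-f(gx))^2$ becomes $(f(gR_ig^{-1}y)-f(y))^2$, not $(f(R_iy)-f(y))^2$. To pass from the former to the latter you must first use the conjugacy stability of $\mathcal G$, which guarantees $gR_ig^{-1}=R_{\tau_g(i)}$ for some permutation $\tau_g$, and then the structural condition~\eqref{structcond}, which guarantees $d_i=d_{\tau_g(i)}$ so that the weighted sum $\sum_i d_i(\cdot)$ can be relabelled by $j=\tau_g(i)$ without changing its value. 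Without~\eqref{structcond} the $g$-dependence does not cancel and the desired bound does not follow. (The condition~\eqref{structcond} is a separate hypothesis from~\eqref{poincG}; it does not enter in ``setting up'' the discrete Poincar\'e inequality.) The paper performs the same relabelling at a later stage and invokes precisely this action of $G$ as a permutation on $(E_1,\ldots,E_m)$; the structural condition is what lets the weights $d_i$ travel with the permuted indices. Once you insert this justification, your argument closes correctly.
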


\begin{proof}
Let $X$ be a random vector with distribution $\mu$ and $\Gamma$ a random isometry of distribution $\gamma$, chosen independently of $X$.
Then by the invariance properties of $\mu$, the random vector $\Gamma X$ has distribution $\mu$.
Hence, using the classical decomposition of the variance 
\begin{eqnarray*}
 \var_\mu(f)&=& \var f(\Gamma X) =\var_X\left( \mathbb E_\Gamma f(\Gamma X)\right) + \mathbb E_X \left(\var_\Gamma f(\Gamma X)\right)\\
 &=& \var_\mu(F) + \int \var_\gamma(g\mapsto f(gx)) \, d\mu(x).
\end{eqnarray*}
We estimate the second term by the discrete Poincar\'e inequality~\eqref{poincG} on $G$:
\begin{eqnarray*}
 \int \var_\gamma(g\mapsto f(gx)) \, d\mu(x)&\le & \int \int_G \sum_i d_i\, \big(f(gR_ix)-f(gx) \big)^ 2 d\gamma(g) \, d\mu(x)\\
 &=& \int_G \sum_i d_i\, \int_{\R^ n} \big(f\circ g(R_ix)-f\circ g(x) \big)^ 2 d\mu(x) \, d\gamma(g)
\end{eqnarray*}
The crucial point is that,  setting $h=f\circ g$, we have in view of Fact~\ref{meanzero}
that  for all $x$,
$ \int_{E_i} (h\circ R_i -h) \, d\mu_{x,E_i}=0$,
which allows to apply the Poincar\'e inequality for the restrictions of $\mu$ to subspaces parallel to $E_i$, without
a remainder term, i.e. in the form $$\int_{E_i} (h\circ R_i -h)^ 2 \, d\mu_{x,E_i} \le \poinc{\mu_{x,E_i}} \int_{E_i} |P_{E_i} \nabla (h\circ R_i -h)|^2 d\mu_{x,E_i}.$$
Consequently, using Fubini's theorem,
\begin{gather*}
\int_{} \big(f\circ g(R_ix)-f\circ g(x) \big)^ 2 d\mu(x) \le  
\int \poinc{\mu_{x,E_i}} \int_{E_i} |P_{E_i}\nabla (h\circ R_i)-P_{E_i}\nabla h |^ 2 d\mu_{x,E_i} \, d\mu(x)\\
\le 2\int \poinc{\mu_{x,E_i}} \left(|P_{E_i}\nabla (f\circ gR_i)(x)|^2+|P_{E_i}\nabla  (f\circ g)(x) |^ 2 \right)\, d\mu(x)
\end{gather*}
Next, observe that $|P_{E_i}\nabla  (f\circ g) |=|P_{E_i}g^ {-1}(\nabla  f)\circ g) |=|gP_{E_i}g^ {-1}(\nabla  f)\circ g) |=
|P_{gE_i}(\nabla  f)\circ g) |$.
Hence, using the invariance of $\mu$ by $G$ twice (first in the form $\poinc{\mu_{x,E_i}}=c_P(\mu_{gx,gE_i})$) and using the fact that
$G$ acts as a permutation on $(E_1,\ldots,E_m)$
\begin{eqnarray*}
\lefteqn{\sum_i\int_G\int \poinc{\mu_{x,E_i}} |P_{E_i}\nabla  (f\circ g)(x) |^ 2 d\mu(x) \, d\gamma(g)}\\
&=& \sum_i\int_G\int \poinc{\mu_{gx,gE_i}} |P_{gE_i}\nabla  f (gx) |^ 2 d\mu(x) \, d\gamma(g)\\
&=& \int_G\sum_i\int \poinc{\mu_{y,gE_i}} |P_{gE_i}\nabla  f (y) |^ 2 d\mu(y) \, d\gamma(g)\\
&=& \int_G\sum_j\int \poinc{\mu_{y,E_j}} |P_{E_j}\nabla  f (y) |^ 2 d\mu(y) \, d\gamma(g)=\sum_i\int \poinc{\mu_{y,E_i}} |P_{E_i}\nabla  f (y) |^ 2 d\mu(y) 
\end{eqnarray*}
By a similar calculation, using that that $R_iE_i=E_i$ 
%(since $E_i=\mathrm{Fix}(R_i)^\bot$ and $R_i$ is an isometry).
\begin{eqnarray*}
\lefteqn{\sum_i\int_G\int \poinc{\mu_{x,E_i}} |P_{E_i}\nabla  (f\circ gR_i)(x) |^ 2 d\mu(x) \, d\gamma(g)}\\
%&=& \sum_i\int_G\int c_P(\mu_{gR_ix,gR_iE_i}) |P_{gR_iE_i}\nabla  f (gR_ix) |^ 2 d\mu(x) \, d\gamma(g)\\
&=& \int_G\sum_i\int \poinc{\mu_{y,gR_iE_i}} |P_{gR_iE_i}\nabla  f (y) |^ 2 d\mu(y) \, d\gamma(g)\\
&=& \int_G\sum_j\int \poinc{\mu_{y,E_j}} |P_{E_j}\nabla  f (y) |^ 2 d\mu(y) \, d\gamma(g)=\sum_i\int \poinc{\mu_{y,E_i}} |P_{E_i}\nabla  f (y) |^ 2 d\mu(y) 
\end{eqnarray*}
Combining the above inequality gives the claim.
\end{proof}

If a variance estimate of Sobolev type 
 is available for $G$-invariant functions, it may be applied to the  above function $F$. The terms involving the gradient of 
$F$ have to be estimated in terms of the gradient of the initial function $f$. To do this, we first introduce a natural definition.
We shall say that a map from $\R^n$ to the set of quadratic forms on  $\R^n$, $Q: x\mapsto Q_x$ is invariant by a group $G$ of isometries of 
$\R^n$ if for all $x,u\in\R^n$ and all $g\in G$,
$$ Q_{gx}(gu)=Q_x(u).$$

\begin{lemma}\label{lem:grad}
Let $G$ be a group of isometries equipped with its Haar probability measure $\gamma$.
Let $\mu$ be a $G$-invariant probability measure on $\R^n$, and $Q$ be a $G$-invariant function
on $\R^n$ with values in positive quadratic forms.
Then for all  $f:\R^n \to \R^n$ with gradient in $L^ 2(\mu)$, setting $F= \int_G f\circ g \, d\gamma(g)$, it holds
$$ \int Q_x(\nabla F(x)) \, d\mu(x) \le \int Q_x(\nabla f(x)) \, d\mu(x) .$$
\end{lemma}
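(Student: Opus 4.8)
\textbf{Proof plan for Lemma~\ref{lem:grad}.}
The plan is to write $F = \int_G f\circ g\, d\gamma(g)$ and compute its gradient by differentiating under the integral sign: since $g$ is an isometry, $\nabla(f\circ g)(x) = g^{-1}(\nabla f)(gx) = g^{*}(\nabla f)(gx)$, so that $\nabla F(x) = \int_G g^{-1}(\nabla f)(gx)\, d\gamma(g)$. In other words, $\nabla F(x)$ is an average, over the probability space $(G,\gamma)$, of the vectors $v_g(x) := g^{-1}(\nabla f)(gx)$. The key point is then a convexity (Jensen) argument: for each fixed $x$, the map $w\mapsto Q_x(w)$ is a nonnegative quadratic form, hence convex on $\R^n$, so
$$Q_x\big(\nabla F(x)\big) = Q_x\!\left(\int_G v_g(x)\, d\gamma(g)\right) \le \int_G Q_x\big(v_g(x)\big)\, d\gamma(g).$$

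Next I would use the $G$-invariance of $Q$ to simplify $Q_x(v_g(x))$. By definition of invariance, $Q_{gx}(g u) = Q_x(u)$ for all $u$; applying this with $u = v_g(x) = g^{-1}(\nabla f)(gx)$ gives $g u = (\nabla f)(gx)$, hence
$$Q_x\big(v_g(x)\big) = Q_{gx}\big((\nabla f)(gx)\big).$$
Substituting into the Jensen bound and integrating against $\mu$,
$$\int Q_x(\nabla F(x))\, d\mu(x) \le \int_{\R^n}\int_G Q_{gx}\big((\nabla f)(gx)\big)\, d\gamma(g)\, d\mu(x).$$
By Fubini's theorem I exchange the two integrations, and for each fixed $g\in G$ I use that $\mu$ is $G$-invariant: the pushforward of $\mu$ under $x\mapsto gx$ is again $\mu$, so $\int_{\R^n} Q_{gx}((\nabla f)(gx))\, d\mu(x) = \int_{\R^n} Q_y((\nabla f)(y))\, d\mu(y)$, which is independent of $g$. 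Since $\gamma$ is a probability measure, integrating this constant over $G$ gives exactly $\int Q_x(\nabla f(x))\, d\mu(x)$, which is the claim.

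The only points requiring a little care, rather than being genuine obstacles, are the differentiation under the integral sign defining $\nabla F$ (justified since $\nabla f \in L^2(\mu)$ and $G$ is compact, so one can argue via approximation or dominated convergence, exactly the kind of routine measurability/integrability check the paper elsewhere leaves to the reader) and the fact that $G$ may be infinite, so that the "average over $G$" is a genuine integral against $\gamma$ rather than a finite sum — but Jensen's inequality for convex functions applies to arbitrary probability measures, so this causes no difficulty. I expect no substantial obstacle; the heart of the argument is simply convexity of $Q_x$ combined with the two invariance hypotheses (of $\mu$ and of $Q$) used in tandem with Fubini.
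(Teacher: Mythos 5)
Your proposal is correct and follows essentially the same route as the paper: compute $\nabla F$ by the chain rule as a $\gamma$-average of $g^{-1}(\nabla f)(gx)$, apply Jensen's inequality using convexity of $Q_x$, and then use Fubini together with the $G$-invariance of $\mu$ and of $Q$. The only cosmetic difference is that you apply the $Q$-invariance before the change of variables coming from $\mu$-invariance, whereas the paper does it in the opposite order, but the two steps commute and the argument is the same.
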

\begin{proof}
Since for all $x$, $u\mapsto Q_x(u)$ is convex, 
\begin{eqnarray*}
\int Q(\nabla F) \, d\mu &= & \int Q_x\left(\int g^{-1} (\nabla f) (gx) \, d\gamma(g) \right)\, d\mu(x)\\
&\le & \int\int  Q_x\left( g^{-1} (\nabla f) (gx)  \right)\, d\mu(x)\, d\gamma(g)\\
&=& \int\int  Q_{g^{-1}y}\left( g^{-1} (\nabla f) (y)  \right)\, d\mu(y)\, d\gamma(g)\\
&=& \int\int  Q_{y}\left(  \nabla f (y)  \right)\, d\mu(y)\, d\gamma(g)=\int Q(\nabla f)\, d\mu,
\end{eqnarray*}
where we have use the invariance of $\mu$ and then the one of $Q$.
\end{proof}

\begin{lemma}\label{lem:Qinv}
Let $G$ be a group of isometries of $\R^n$.
\begin{enumerate}
\item If $\Phi:\R^ n \to \R^ n$ is twice differentiable and $G$-invariant then $D^2\Phi$ is also $G$-invariant (in the sense of 
  quadratic forms valued functions).
\item Let $(E_1,\ldots,E_m )$ be a $m$-tuple of subspaces of $\R^n$, onto which $G$ acts by permutation>
%(ie for all $g,i$ there exists $j$ such that $gE_i=E_j$). 
Let $c(x,E_i)$ be coefficients such that for all $x,g,i$, $c(gx,gE_i)=c(x,E_i)$. Then (identifying quadratic forms and symmetric linear maps),
%via the canonical Euclidean structure), 
$x\mapsto \sum_i c(x,E_i) P_{E_i}$
is also $G$-invariant.
\item If $x\mapsto Q_x$ is $G$-invariant with values in definite positive quadratic forms, then so is the dual form map $x\mapsto Q_x^*$.
\end{enumerate}
\end{lemma}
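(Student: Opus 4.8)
The plan is to verify each of the three assertions by directly unwinding the relevant definitions of $G$-invariance; all three are essentially formal consequences of the chain rule and the fact that isometries are orthogonal.

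\textbf{Item (1).} I would start from the fact that $\Phi$ being $G$-invariant means $\Phi(gx)=\Phi(x)$ for all $g\in G$, $x\in\R^n$. Differentiating once via the chain rule gives $g^{\ast}\nabla\Phi(gx)=\nabla\Phi(x)$, i.e. $\nabla\Phi(gx)=g\,\nabla\Phi(x)$ since $g^{\ast}=g^{-1}$ for orthogonal maps. Differentiating a second time yields $D^2\Phi(gx)\,g = g\,D^2\Phi(x)$, hence $g^{-1}D^2\Phi(gx)\,g = D^2\Phi(x)$, which is precisely the statement $D^2\Phi(gx)(gu)\cdot(gv) = D^2\Phi(x)(u)\cdot(v)$, i.e. $G$-invariance in the quadratic-form sense.

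\textbf{Item (2).} For a single subspace $E$, I would use the identity $P_{gE} = g\,P_{E}\,g^{-1}$ (valid for any orthogonal $g$, since $gE$ is a subspace and conjugating the projection by an isometry gives the projection onto the image). Since $G$ acts on $(E_1,\dots,E_m)$ by permutation, for fixed $g$ there is a permutation $\tau$ with $gE_i = E_{\tau(i)}$. Then, evaluating the candidate map at $gx$ and conjugating by $g$,
$$
g^{-1}\Big(\sum_i c(gx,E_i)P_{E_i}\Big)g = \sum_i c(gx,E_i)\,g^{-1}P_{E_i}g.
$$
Now I would re-index the sum by $j=\tau^{-1}(i)$, so $E_i = gE_j$ and $g^{-1}P_{E_i}g = g^{-1}P_{gE_j}g = P_{E_j}$, while $c(gx,E_i) = c(gx,gE_j) = c(x,E_j)$ by the hypothesis on the coefficients. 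This gives $\sum_j c(x,E_j)P_{E_j}$, which is the required invariance. The case of the $c_i$ being constants (as in~\eqref{decompid}) is of course a special case.

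\textbf{Item (3).} Here I would use that the dual (polar) quadratic form is $Q_x^{\ast}(w) = \sup_{v}\{2\,v\cdot w - Q_x(v)\}$, equivalently $Q_x^{\ast}$ corresponds to the inverse matrix $A_x^{-1}$ when $Q_x \leftrightarrow A_x$. Since $Q_{gx}(gv)=Q_x(v)$ means $g^{\ast}A_{gx}g = A_x$, i.e. $A_{gx}=gA_xg^{-1}$, inverting gives $A_{gx}^{-1}=gA_x^{-1}g^{-1}$, which is exactly the $G$-invariance of $Q^{\ast}$. I would phrase this either via the matrix identity or, if one prefers a coordinate-free statement, via the supremum formula together with the change of variable $v\mapsto g^{-1}v$ in the supremum, using that $g$ is a bijection and an isometry.

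None of the three steps presents a genuine obstacle; the only point requiring a moment of care is the re-indexing in item (2), where one must track the permutation $\tau$ correctly and check that the invariance hypothesis on the coefficients $c(\cdot,\cdot)$ is used with the right pair of arguments (namely $c(gx,gE_j)=c(x,E_j)$). The identity $gP_Eg^{-1}=P_{gE}$ should be stated explicitly as it is the geometric fact driving the whole computation. For item (3) one should also note in passing that definiteness of $Q_x$ is what guarantees $Q_x^{\ast}$ is finite and again definite, so the statement makes sense.
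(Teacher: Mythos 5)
Your proof is correct and follows essentially the same route as the paper's: differentiate the invariance twice for (1), use the conjugation identity $P_{gE}=gP_Eg^{-1}$ together with a re-indexing of the permuted tuple for (2), and pass to the inverse matrix for (3). The only cosmetic difference is that you phrase (2) in matrix-conjugation language ($g^{-1}A_{gx}g=A_x$) where the paper works directly with the quadratic-form expression $\sum_i c(gx,E_i)|P_{E_i}gu|^2$, and for (3) you invoke the Legendre-transform / matrix-inverse description of the dual form where the paper cites the equivalent formula $Q^*(x)=\sup\{(x\cdot y)^2/Q(y)\}$; these are the same computations.
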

\begin{proof}
The first item is obvious by differentiating $\Phi(gx)=\Phi(x)$.
For the second item, denoting the quadratic form by $Q_x$, and using the properties of $c$ gives
\begin{eqnarray*}
Q_{gx}(gu)&=& \sum_i c(gx,E_i) |P_{E_i}gu|^2 = \sum_i c(gx,E_i) |g^{-1}P_{E_i}gu|^2\\
  &=& \sum_i c(x,g^{-1}E_i) |P_{g^{-1}E_i}u|^2= \sum_j c(x,E_j) |P_{E_j}u|^2=Q_x(u), 
\end{eqnarray*}
where in the last equality, we have used that $g^{-1}$ induces a permutation of $(E_1,\ldots,E_m)$.
The last item is straightforward from the definition of the dual of a definite positive quadratic form
$Q^*(x)=\sup\{ (x\cdot y)^2/Q(y);\; y\neq 0\}$.
Also, recall that if $Q(x)=Ax\cdot x$ for all $x$, then $Q^*(x)=A^{-1}x\cdot x$.
\end{proof}

We can now state the extensions of  the estimates of Section~\ref{vargeneral} to general functions.

\begin{theorem}[Extension of Theorem~\ref{theo:invariance1} to general functions]\label{th:vargeneral}
Let $d\mu(x)=e^{-\Phi(x)}dx$ be a probability measure which is invariant by isometries $R_1,\ldots,R_m$ and set  $E_i=\mathrm{Fix}(R_i)^\bot$. Assume the conditions of Setting~\ref{setting} are satisfied.
Let $c_1,\ldots,c_m>0$ be coefficients also satisfying the condition~\eqref{structcond}  and such that $\{E_i,c_i\}$ decompose the identity in the sense of~\eqref{decompid}. 
%Assume Conditions~\eqref{condaction} are verified. 
%\medskip

If pointwise, $H_x:=D^2\Phi(x)+\sum_i \frac{c_i}{\poinc{\mu_{x,E_i}}} P_{E_i}>0$,
then for all $f:\R^n\to \R$ locally Lipschitz we have
 $$ \var_\mu(f)\le \int \Big(H_x^{-1}+4\sum d_i \poinc{\mu_{x,E_i}}P_{E_i}\Big)\nabla f(x) \cdot \nabla f(x) \, d\mu(x).$$

In particular, if there exists $\alpha\ge 0$ such that for all $x$, $D^2\Phi(x)\ge -\alpha \mathrm{Id}$ and for all $i$, $\poinc{\mu_{x,E_i}}<1/\alpha$,
then for all $f:\R^n\to \R$ locally Lipschitz,
 $$ \var_\mu(f)\le \int \sum_i (c_i+ 4d_i) \big(\poinc{\mu_{x,E_i}}^{-1}-\alpha \big)^{-1} |P_{E_i} \nabla f(x)|^2 \, d\mu(x).$$
 
 These two inequalities hold in particular if we take $d_i\equiv \poinc{\mathcal G}$, the Poincar\'e constant of the group.
\end{theorem}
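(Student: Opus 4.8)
The plan is to combine Proposition~\ref{prop:var} (which bounds $\var_\mu(f)$ by $\var_\mu(F)$ plus a gradient term involving $\sum_i d_i\,\poinc{\mu_{x,E_i}}\,|P_{E_i}\nabla f|^2$) with Theorem~\ref{theo:invariance1} applied to the symmetrized function $F$, and then to transfer the resulting gradient estimate for $F$ back to $f$ via Lemma~\ref{lem:grad}. First I would observe that $F=\int_G f\circ g\,d\gamma(g)$ is $G$-invariant, hence in particular $\{R_i\}_{i\le m}$-invariant, so Theorem~\ref{theo:invariance1} gives $\var_\mu(F)\le \int H_x^{-1}\nabla F(x)\cdot\nabla F(x)\,d\mu(x)$ under the standing hypothesis $H_x>0$. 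The key point is that $x\mapsto H_x^{-1}$ is a $G$-invariant quadratic-form-valued map: by Lemma~\ref{lem:Qinv}(1) the term $D^2\Phi$ is $G$-invariant; by Lemma~\ref{lem:Qinv}(2) the term $\sum_i \frac{c_i}{\poinc{\mu_{x,E_i}}}P_{E_i}$ is $G$-invariant (here one uses the structural condition~\eqref{structcond} on the $c_i$, which guarantees $c(gx,gE_i)$ matches the coefficient attached to the permuted subspace, together with the already-noted invariance $\poinc{\mu_{x,E_i}}=\poinc{\mu_{gx,gE_i}}$); and by Lemma~\ref{lem:Qinv}(3) the dual map $H_x^{-1}$ is then $G$-invariant as well. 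Therefore Lemma~\ref{lem:grad} applies with $Q_x(v)=H_x^{-1}v\cdot v$ and yields $\int H_x^{-1}\nabla F\cdot\nabla F\,d\mu\le \int H_x^{-1}\nabla f\cdot\nabla f\,d\mu$. Adding the remainder term from Proposition~\ref{prop:var} gives the first displayed inequality.

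For the second inequality, I would proceed exactly as in the corresponding special case of Theorem~\ref{theo:invariance1}: under $D^2\Phi(x)\ge-\alpha\,\mathrm{Id}$ one has the lower bound $H_x\ge\sum_i c_i\big(\poinc{\mu_{x,E_i}}^{-1}-\alpha\big)P_{E_i}$, and since the hypothesis $\poinc{\mu_{x,E_i}}<1/\alpha$ makes each coefficient positive, the same algebraic argument used at the end of the proof of Theorem~\ref{theo:invariance1} (namely, for a positive matrix with $H\ge\sum_i c_i\alpha_i P_{E_i}$ and $w=H^{-1}v$, write $H^{-1}v\cdot v = 2v\cdot w - Hw\cdot w\le\sum_i\frac{c_i}{\alpha_i}|P_{E_i}v|^2$) gives the pointwise bound $H_x^{-1}\nabla f\cdot\nabla f\le\sum_i c_i\big(\poinc{\mu_{x,E_i}}^{-1}-\alpha\big)^{-1}|P_{E_i}\nabla f|^2$. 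Combining this with the remainder term $4\sum_i d_i\,\poinc{\mu_{x,E_i}}|P_{E_i}\nabla f|^2$ and noting $\poinc{\mu_{x,E_i}} = \big(\poinc{\mu_{x,E_i}}^{-1}\big)^{-1}\le\big(\poinc{\mu_{x,E_i}}^{-1}-\alpha\big)^{-1}$ (since $\alpha\ge0$), one collects the coefficients into $(c_i+4d_i)\big(\poinc{\mu_{x,E_i}}^{-1}-\alpha\big)^{-1}$, which is the claimed bound. The final sentence is immediate: taking $d_i\equiv\poinc{\mathcal G}$ makes~\eqref{poincG} hold by definition of the Poincar\'e constant of the group, and the constant weights trivially satisfy~\eqref{structcond}, so the hypotheses of Setting~\ref{setting} are met.

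I expect the only genuinely delicate step to be the verification that $H_x^{-1}$ is $G$-invariant, because it requires carefully matching the bookkeeping of Lemma~\ref{lem:Qinv}(2): one must check that the coefficient function $c(x,E_i):=c_i/\poinc{\mu_{x,E_i}}$ satisfies $c(gx,gE_i)=c(x,E_i)$, which uses both the invariance of the conditioned Poincar\'e constants under $G$ and the fact that~\eqref{structcond} forces the $c_i$ to be constant along conjugacy classes — so that if $gE_i=E_j$ then $c_i=c_j$, and hence the coefficient attached to $E_j$ in the sum is consistent. Everything else is a routine assembly of the cited lemmas plus the linear-algebra inequality already established in the proof of Theorem~\ref{theo:invariance1}.
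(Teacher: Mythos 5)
Your proof is correct and follows essentially the same route as the paper: Proposition~\ref{prop:var} to reduce to the symmetrized $F$ plus a remainder, Theorem~\ref{theo:invariance1} applied to $F$, and Lemmas~\ref{lem:Qinv} and~\ref{lem:grad} to transfer the $H^{-1}$-weighted gradient bound from $F$ back to $f$. The only (harmless) divergence is in the second displayed inequality: the paper re-runs the symmetrization argument starting from the bound~\eqref{eq:var2} applied to $F$, whereas you derive it from the first displayed inequality already established, by bounding $H_x^{-1}$ pointwise via $H_x\ge\sum_i c_i(\poinc{\mu_{x,E_i}}^{-1}-\alpha)P_{E_i}$ and the algebraic estimate from the proof of Theorem~\ref{theo:invariance1}; both routes are valid and essentially equal in length. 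One small imprecision in your closing paragraph: the relevant statement is not that $gE_i=E_j$ implies $c_i=c_j$ for arbitrary $i,j$, but that the permutation $\tau$ of indices used in Lemma~\ref{lem:Qinv}(2) is precisely the one induced by conjugation on $\mathcal G$ (i.e., $gE_i=E_{\tau(i)}$ with $gR_ig^{-1}=R_{\tau(i)}$), and then~\eqref{structcond} gives $c_i=c_{\tau(i)}$; together with $\poinc{\mu_{gx,gE_i}}=\poinc{\mu_{x,E_i}}$ this yields the invariance of the coefficient map required by Lemma~\ref{lem:Qinv}(2).
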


\begin{proof}
Given $f$, the $G$-invariant function $F=\int f\circ g \, d\gamma$ satisfies $\var_\mu(F)\le \int H^{-1}\nabla F\cdot \nabla F \, d\mu$ thanks to 
Theorem~\ref{theo:invariance1}. Next, Lemma~\ref{lem:Qinv} ensures that $x\to H_x^{-1}$ is $G$-invariant. Thus, by Lemma~\ref{lem:grad},
$\int H^{-1}\nabla F\cdot \nabla F \, d\mu$ is at most $\int H^{-1}\nabla f\cdot \nabla f \, d\mu$. The first claim thus follows from
Proposition~\ref{prop:var}.
  
  For the second inequality, we apply~\eqref{eq:var2} to the function $F$ and so the same arguments gives 
  $$ \var_\mu(f)\le \int \sum_i c_i \big(\poinc{\mu_{x,E_i}}^{-1}-\alpha \big)^{-1} |P_{E_i} \nabla f(x)|^2 \, d\mu(x)
  +4\int \sum_i  d_i\poinc{\mu_{x,E_i}}|P_{E_i} \nabla f(x)|^2 \, d\mu(x).$$
 To conclude, note that, since $\alpha\ge 0$, we have $\poinc{\mu_{x,E_i}} \le \big(\poinc{\mu_{x,E_i}}^{-1}-\alpha \big)^{-1}$.
\end{proof}
\begin{remark}
Let us note that in all examples, the coefficients $c_i$ from the decomposition of the identity will indeed verify condition~\eqref{structcond}. If not, this property can however be enforced using  the invariance by conjugacy of $\mathcal G$, by a standard averaging procedure. Indeed, if $\sum c(R_i) P_{E_i} =\mathrm{Id}$, then conjugating by $g\in G$, 
$$
 \mathrm{Id} = g\cdot \left(\sum_i c(R_i) P_{E_i} \right) g^{-1}= \sum_i c(R_i) P_{gE_i}
    =\sum_j c(gR_jg^{-1}) P_{E_j},
$$
using that any $g\in G$ permutes the $m$-tuple $(E_1,\ldots,E_m)$. Averaging over $G$ yields
$$ \mathrm{Id}= \sum_j \left(\int_G c(gR_jg^{-1})\, d\gamma(g)\right) P_{E_j},$$
and the new coefficients $c'_j:=\int_G c(gR_jg^{-1})\, d\gamma(g)$ then verify~\eqref{structcond}.
% that $E_i=gE_j \Longrightarrow c'(E_i)=c'(E_j)$.
\end{remark}

A simple and useful instance of the previous theorem is  when $\poinc{\mathcal G}\le \kappa \min_i c_i$ (or more generally, when~\eqref{poincG} holds with $d_i\le \kappa c_i$). Indeed, we  can then conclude that the variance estimate \eqref{eq:var2} for invariant functions, passes to general function $f:\R^n\to \R$,  in the case when $-\alpha=\rho\le 0$,  with only an additional multiplicative factor:
 \begin{eqnarray}
  \var_\mu(f)&\le& (1+4\kappa) \int \sum_i c_i\, \big(\poinc{\mu_{x,E_i}}^{-1}-\alpha \big)^{-1} |P_{E_i} \nabla f(x)|^2 \, d\mu(x).
  \label{eq:var3} \\
 &\le & (1+4\kappa)\sup_{i,x}\big(\poinc{\mu_{x,E_i}}^{-1}-\alpha \big)^{-1} \int |\nabla f |^2 \, d\mu
  \label{eq:var3bis}
\end{eqnarray}

Let us present a few examples where  we are in such a situation.
\begin{itemize}
\item Unconditional symmetries: in this case $m=n$, and $R_i$ is the reflection of hyperplane $\{x;\, x_i=0\}$.
Obviously $E_i=\R e_i$ and $c_i=1$ provide the decomposition $\sum_i P_{E_i}=\mathrm{Id}$.
The group $G$ generated by these $n$ reflections is commutative and isomorphic to $\{-1,1\}^n$ (with coordinate-wise multiplication).
With our notation $\poinc{\mathcal G}=1/4$. Indeed, by the usual tensorisation property it is enough to deal with the 
case $n=1$, which is quite simple as for $f:\{-1,1\}\to \R$,  and for the uniform probability on the two-points space
$\var(f)=(f(0)-f(1))^2/4$, while the Dirichlet form is $\int (f(x)-f(-x))^2 d\gamma(x)=(f(0)-f(1))^2$.
In particular for any $n$, $\poinc{\mathcal G}=1/4 \min_i c_i$, which ensures~\eqref{eq:var3}
for general functions, at the expense of a additional factor $(1+4\kappa)=2$. 

\item The group of symmetries of  the regular simplex of $\R^n$, say with barycenter at 0, 
can be represented as the group of permutations on its $n+1$ vertices $u_1,\ldots,u_{n+1}$. It is generated by $n(n+1)/2$ reflections, which correspond
to transpositions of two vertices. Let us denote them $R_{i,j}$ for $1\le i<j\le n+1$. Then $E_{i,j}=\R(u_i-u_j)$.
The corresponding decomposition of the identity is  
$$ \frac{2}{n+1}\sum_{1\le i<j\le n+1} P_{E_{i,j}} = \mathrm{Id},$$
(the appendix explains why all the coefficients can be taken equal. Taking traces gives their common value).
Hence in this case $c_{i,j}=\frac{2}{n+1}$.
When the generating set is taken to be the set $\{R_{ij}\}$ of all transpositions (which is indeed stable by conjugacy), the spectral 
gap of the Cayley graph of the symmetric group was computed by Diaconis and Shahshahani \cite{DS}. Their result can be stated
in our notation as $\poinc{\mathcal G}= \frac{1}{2(n+1)}$ which is again equal to $\min c_{i,j}/4$. Thus~\eqref{eq:var3} holds with 
$(1+4\kappa)=2$.

\item The group of symmetries of the regular $k$-gon in the plane, has $2k$ elements: $k$ rotations and $k$ reflections.
The set $\mathcal G=\{R_1,\ldots,R_k\}$ of reflections generate the group and is stable by conjugacy. The corresponding decomposition
of the identity is $\mathrm{Id_{\R^2}}=\frac2k \sum_{i=1}^k P_{E_i}$ hence $c_i=2/k$ for all $i$. 
Next, the Cayley graph $\Gamma$ of $G$ for the generating set $\mathcal{G}$ is a complete bipartite graph on $2k$ elements
(direct and undirect isometries being the two  sets of vertices). 
With our notation $2\poinc{\mathcal G}$ is the inverse of the smallest positive eigenvalue of $kI-A$, where $A$ is the adjancency matrix
of $\Gamma$ and $I$ the identity of the same size. The spectrum of this adjacency matrix is easily computed.
One obtains $\poinc{\mathcal G}=1/(2k)=\min c_i/4$, as before.

\item The latter two examples are finite irreducible reflection groups. These groups have been classified by Coxeter.
It is tempting to believe that a similar inequality between the coefficients of the 
decomposition of the identity (given by Lemma~\ref{lem:reflection}) and the spectral gap  holds for all of them. Diaconis and Shahshahani have expressed the spectral gap 
for Cayley graphs of groups, with conjugacy stable generating sets, in terms of the characters of irreducible 
decompositions. Since the representation theory of reflection groups is well understood, it is in principle 
possible to settle this question.

\item The group of direct isometries of the regular $k$-gon is generated by the set $\mathcal G$ of its non-trivial 
rotations. For each such rotation $R_i$, $1\le i\le k-1$, $E_i=\R^2$, hence one can take $c_i=1/(k-1)$ in the decomposition 
of the identity. 
The corresponding Cayley graph is the complete graph on $k$ vertices. This leads to $\poinc{\mathcal G}=1/(2k)$.
This example is not interesting in itself, since each $E_i$ is equal to the whole space. It will be of interest though 
as a factor in a product group.
\end{itemize}

It will be useful to consider groups of isometries having a product structure. This is the case of  reflection groups, which factor as direct products of irreducible reflection groups.
 So let use assume that $\mathcal G=\{R_1,\ldots,R_m\}$ can be written as 
a disjoint union $\mathcal G_1\cup \ldots \cup \mathcal G_\ell$, such that the sets $\mathrm{Fix}(\mathcal G_i)^ \bot$,
$i=1,\ldots,\ell$ are orthogonal. This implies in particular the $G$ is the direct product of the corresponding groups
$G_1,\ldots,G_\ell$, which act on different blocks of an orthogonal decomposition of $\R^n$.
For convenience let us denote $\mathcal G_j=\{R_{j,1},\ldots,R_{j,m_j}\}$ and $E_{j,i}=\mathrm {Fix}(R_{j,i})^\bot$. Note that conjugacy will respect the product structure. Therefore, the Poincar\'e inequality~\eqref{poincG} holds (with some abuse of notation) with $d_{ij} = \poinc{\mathcal G_j}$ for all $i\le m_j$, which satisfy~\eqref{structcond}. By doing so, we get in~\eqref{eq:var3}  a result sharper than by using the Poincar\'e constant over the whole product, which is $\poinc{\mathcal G} =\max_j \poinc{\mathcal G_j}$.
In particular,  if we have a decomposition of the identity $\{c_{ij}, E_{ij}\}$ with
\begin{equation}\label{eq:compij}
 d_{ij}=\poinc{\mathcal G_j}\le \kappa \min_i c_{j,i}
 \end{equation}
then,  for all function $f:\R^n\to\R$ that is locally Lipschitz,
%One can easily prove the following version of Proposition~\ref{prop:var}:
%Let $f$ be a smooth function on $\R^ n$ and $F= \int f\circ g \, d\gamma(g)$.
%Then
%$$\var_\mu(f) \le \var_\mu(F)+4 \sum_j \poinc{G_j}\int \sum_i c_P(\mu_{x,E_{j,i}}) |P_{E_{j,i}}\nabla f(x)|^2 d\mu(x).$$
 %
$$ \var_\mu(f)\le (1+4\kappa) \int \sum_{ij} c_{ij}\, \big(\poinc{\mu_{x,E_{ij}}}^{-1}-\alpha \big)^{-1} |P_{E_{ij}} \nabla f(x)|^2 \, d\mu(x).$$
 %It can be proved either by applying iteratively Proposition~\ref{prop:var}, or by extending it to Dirichlet forms involving coefficients
%$d_i$ in front of the term $(f(gR_i)-f(g))^2$ with the property that $d_i=d_j$ when $R_i$ and $R_j$ are conjugates.
%For such a product group, the condition for extending the variance estimate \eqref{eq:var2} to general functions becomes: for all $j$, 
%$ \poinc{\mathcal G_j}\le \min_i c_{j,i}$.
In particular, if the decomposition of the identity is obtained by concatenating decompositions of the identity for each $G_j$ on the
subspace were it acts, having checked the condition on each factor implies it on the whole space.
This allows to get new examples from the ones listed above, leading to interesting spectral inequalities,  as discussed in the next section. We will consider in particular direct products were the factors are isometry groups 
of simplices in different dimensions, or groups of (direct or general) isometries of regular polytopes in planes.

\section{Spectral gap estimate for log-concave measures}
\label{sec:spectralgap}
\subsection{Using weighted Poincar\'e inequalities}

We show next how to pass from the weighted Poincar\'e type inequalities that we have established so far to spectral gap estimates, by using inequalities like~\eqref{eq:var3}, which is stronger than~\eqref{eq:var3bis}.
Let $\mu$ be a log-concave probability measure, invariant by isometries $R_1,\ldots,R_m$ satisfying Setting~\ref{setting}.
We also assume that $\sum c_i P_{E_i}=\mathrm{Id}$, were $c_i>0$ satisfy that $c_i=c_j$ when $R_i$ and $R_j$ are conjugates in the group $G$ that these
isometries generate (recall that $E_i$ is the orthogonal of $F_i:=\mathrm{Fix}(R_i)$).
By Theorem~\ref{th:vargeneral}, we know that for any locally Lipschitz function $f$,
$$ \var_\mu(f)\le \int \sum_i (c_i+ 4d_i) \poinc{\mu_{x,E_i}} |P_{E_i} \nabla f(x)|^2 \, d\mu(x).$$
The restricted measures are also log-concave, so as in the proof of Theorem~\ref{th:varnorm}, we may apply the bound~\eqref{KLS}.
%$\poinc{\nu} \le 4\int_E |y|^2d\nu(y)$ valid  for any log-concave probability
%measure $\nu$ on a Euclidean space $(E, |\cdot |)$. 
We obtain, with the  probabilistic notation where $X$ is a random vector with distribution
$\mu$,
$$\var_\mu(f) \le 4\sum_{i=1}^m (c_i+4d_i) \dE\left[ \dE\Big[ | P_{E_i} X|^2\,  \Big| P_{F_i} X \Big]\,  \big|P_{E_i}\nabla f( X) \big|^2 \right].$$
Setting $\kappa=\max_i d_i/c_i$ and using the decomposition of the identity gives
\begin{eqnarray*}
\var_\mu(f) &\le & 4(1+4\kappa) \dE\left[ \sum_{i=1}^m c_i \dE\Big[ | P_{E_i} X|^2\,  \Big| P_{F_i} X \Big]\,  \big|P_{E_i}\nabla f( X) \big|^2 \right]\\
&\le&  4(1+4\kappa) \dE\left[ \max_{1\le i\le m} \left( \dE\Big[ | P_{E_i} X|^2\,  \Big| P_{F_i} X \Big]\right)\,  \big|\nabla f( X) \big|^2 \right].
\end{eqnarray*}
The weight in front of the gradient may be unbounded.
However a deep result of E. Milman  \cite{E.M} ensures that for log-concave probabilities, any weight appearing in front of the gradient in such inequality can
be "averaged out" of the integral up to a numerical constant (because of the equivalence between $L^2$ and $L^\infty$ Poincar\'e inequalities), and so we get that  there is a universal constant $c_0$ such that
\begin{equation}\label{eq:max}
 \poinc{\mu}\le c_0(1+4\kappa) \dE \left[ \max_{1\le i\le m} \dE\Big[ | P_{E_i} X|^2\,  \Big| P_{F_i} X \Big] \right].
\end{equation}
To estimate the latter expectation, we use the so-called $\psi_1$-property of log-concave distribution (which is related to the result of 
Borell~\cite{Borell} recalled at the end of Section~\S\ref{sec:L2}): there exists a universal constant $c$ such 
that  for any $k\ge 1$, any log-concave random vector $Y$ in $\R^k$ and any even  1-homogeneous convex function $H:\R^k\to \R^+$, 
$$\dE \exp\Big( \frac{H(Y)}{c\sqrt{\dE\big( H(Y)^2\big)}}\Big)\le 2.$$
Setting $D=c^2 \max_i \dE \big( | P_{E_i} X|^2\big)$, using the convexity of $\psi_{1/2}(t)=\exp(\sqrt{1+t})$, $t\ge0$ and applying Jensen inequality twice, we get
\begin{eqnarray*}
\psi_{1/2} \left( \dE \left[ \max_{1\le i\le m} \dE\Big[ D^{-1}| P_{E_i} X|^2\,  \Big| P_{F_i} X \Big]\right] \right)
&\le &  \dE \max_{1\le i\le m} \dE \Big[ \psi_{1/2} \left(  D^{-1}| P_{E_i} X|^2 \right)\,  \Big| P_{F_i} X \Big] \\
\le   \dE \sum_{1\le i\le m} \dE \Big[ \psi_{1/2} \left(  D^{-1}| P_{E_i} X|^2 \right)\,  \Big| P_{F_i} X \Big] &=&
\sum_{1\le i\le m} \dE \left[ \psi_{1/2} \left(  D^{-1}| P_{E_i} X|^2 \right)\right]\\
\le e   \sum_{1\le i\le m} \dE \left[ \exp \left(  D^{-1/2}| P_{E_i} X| \right) \right] &\le & 2e m.
\end{eqnarray*}
%We have also used that the projection of a log-concave random vector is also log-concave.
This estimates leads to 
$$\poinc{\mu}\le c' (1+4\kappa) (1+\log m)^2 \max_i \dE \big( | P_{E_i} X|^2\big).$$
If $\mu$ is also isotropic, then $\poinc{\mu}\le c' (1+4\kappa) (1+\log m)^2 \max_i \mathrm{dim}(E_i)$.
This estimates depends on the number of isometries involved, so it is better to choose them parsimoniously. In particular, note that
in the previous estimation of $ \dE \left[ \max_{1\le i\le m} \dE\Big[ | P_{E_i} X|^2\,  \big| P_{F_i} X \Big] \right]$, what really matters is  the number of different terms,
\begin{equation}
\label{def:m'}
m':={\rm Card}\big(\{ E_i \; ; \  i=1,\ldots, m\}\big),
\end{equation}
for if $E_i=E_j$ then $\dE\Big[ | P_{E_i} X|^2\,  \big| P_{F_i} X \Big] = \dE\Big[ | P_{E_j} X|^2\,  \big| P_{F_j} X \Big] $ which contributes only once in the supremum. So under the same assumptions, the bound  for a log-concave isotropic $\mu$ is rather
\begin{equation}
\label{eq:poincsym}
\poinc{\mu}\le c' (1+4\kappa) (1+\log m')^2 \max_i \mathrm{dim}(E_i).
\end{equation}
Let us illustrate the interest of such bound in  the following application, which extends Klartag's result to more general 
sets of reflections.
\begin{corollary}
Let $H_1,\ldots,H_m$ be hyperplanes of $\R^n$ such that $\cap_{i=1}^m H_i=\{0\}$.
Let $\mu$ be an isotropic log-concave measure on $\R^n$, which is invariant by the hyperplane symmetries $S_{H_1},\ldots,S_{H_m}$.
Then $ \poinc{\mu}\le c \log(1+n)^2,$
where $c>0$ is a numerical constant.
\end{corollary}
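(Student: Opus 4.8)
The plan is to reduce the corollary to the weighted spectral gap estimate~\eqref{eq:poincsym}, which requires exhibiting a set of hyperplane symmetries leaving $\mu$ invariant, stable under conjugacy, equipped with a discrete Poincar\'e inequality on the generated group, and yielding a decomposition of the identity with a favorable comparison between the weights $c_i$ and the discrete Poincar\'e constants $d_i$. The symmetries $S_{H_1},\dots,S_{H_m}$ are given and, since $\cap_i H_i=\{0\}$, the group $G$ they generate has $\fix(G)=\{0\}$. First I would invoke the reflection-group structure recalled in the Appendix (Lemma~\ref{lem:dec-reflex} and the surrounding discussion): the group generated by finitely many reflections with trivial common fixed space is a finite reflection group, which decomposes as a direct product of irreducible reflection groups acting on orthogonal blocks of $\R^n$. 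Replacing the given generating set by the \emph{full} set of reflections in $G$ (which is conjugacy-stable and still leaves $\mu$ invariant, being contained in $\O_n(\mu)$), Lemma~\ref{lem:reflection} provides a decomposition of the identity $\sum_i c_i P_{E_i}=\id$ with $E_i=\fix(R_i)^\perp$ one-dimensional, so $\max_i\dim(E_i)=1$.

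The next step is to control the two parameters entering~\eqref{eq:poincsym}, namely $m'=\mathrm{Card}\{E_i\}$ and $\kappa=\max_i d_i/c_i$. For the count: the number of distinct reflecting hyperplanes of a finite reflection group in $\R^n$ is polynomially bounded in $n$ — indeed each block contributes at most $O(d_j^2)$ reflections where $d_j$ is its dimension (type $A$, $B$, $D$ and the dihedral and exceptional types all have $O(d^2)$ reflections, except dihedral $I_2(k)$ which has $k$ reflections but acts on a plane; here one uses that the number of reflecting hyperplanes is at most, say, $\binom{n+1}{2}$ in the worst product case, or more crudely that it is bounded by a polynomial in $n$), so $\log m' = O(\log n)$. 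For $\kappa$: using the product structure as in the paragraph following Theorem~\ref{th:vargeneral}, I would take $d_{j,i}=\poinc{\mathcal G_j}$ on each irreducible factor $\mathcal G_j$, and it is exactly the content of the worked examples (unconditional/type $B$, symmetric group/type $A$, dihedral/type $I_2(k)$) that in each irreducible case $\poinc{\mathcal G_j}=\tfrac14\min_i c_{j,i}$, giving $\kappa=1/4$ uniformly, hence $1+4\kappa=2$. The general irreducible case is flagged in the excerpt as ``in principle'' checkable via Diaconis–Shahshahani character formulas; for the corollary one only needs $\kappa \le C$ for an absolute constant $C$, which I would either cite from the Appendix or obtain by the crude bound that on an irreducible block of dimension $d_j$ all the $c_{j,i}$ are comparable (within an absolute factor) by the transitivity of the $W_j$-action on reflections of a given length, while $\poinc{\mathcal G_j}=O(\min_i c_{j,i})$ follows from the spectral gap of the Cayley graph being bounded below by a constant times the minimal coefficient.

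Finally, since $\mu$ is log-concave and isotropic and invariant under all the $R_i$, and since $\{E_i,c_i\}$ decomposes the identity with the $c_i$ and $d_i=\poinc{\mathcal G_j}$ satisfying the structural condition~\eqref{structcond} and $d_i\le\kappa c_i$, the hypotheses of the chain of estimates leading to~\eqref{eq:poincsym} are met (with $\rho=0$, i.e. $\alpha=0$, which is fine since the restricted measures $\mu_{x,E_i}$ are one-dimensional log-concave hence have finite Poincar\'e constant by~\eqref{KLS}). Plugging $\max_i\dim(E_i)=1$, $1+4\kappa = O(1)$, and $1+\log m' = O(\log(1+n))$ into~\eqref{eq:poincsym} yields
$$\poinc{\mu}\le c'(1+4\kappa)(1+\log m')^2\max_i\dim(E_i)\le c\,\log(1+n)^2,$$
as claimed. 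The main obstacle is the uniform bound $\kappa\le C$ across all irreducible reflection types: for the corollary it suffices to know this holds with an absolute constant, and I expect to dispatch it either by the parsimonious product argument restricted to the classical and dihedral types (which already exhaust what is needed for ``invariances of the simplex''-type applications) together with a uniform-in-type comparison of the decomposition coefficients from Lemma~\ref{lem:reflection}, or by a direct appeal to the Appendix if it records this comparison; the exceptional types, being finitely many and of bounded rank, contribute only an absolute constant regardless.
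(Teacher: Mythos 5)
There is a genuine gap, and it sits precisely where you flag uncertainty. Your plan is to keep the full set of reflections in $G$ and then bound the number $m'$ of distinct subspaces $E_i$ polynomially in $n$. This fails: if $G$ has a dihedral factor $I_2(k)$ acting on some two-dimensional block, that factor contributes $k$ distinct reflecting lines, and nothing in the hypotheses bounds $k$ in terms of $n$ (already for $n=2$, $\mu$ can have the invariances of any regular $k$-gon). So the bound $m'=O(n^2)$, and hence $\log m'=O(\log n)$, is simply not available for the generating set you chose. The paper's proof circumvents exactly this: by Lemma~\ref{lem:coxeter} it passes to a subgroup $G''=G_1\times\cdots\times G_\ell$ where each factor is either $\mathcal O(\Delta_{n_i})$ on a block of dimension $n_i\ge 3$ (with the $n_i(n_i+1)/2$ simplex reflections as generators) or the \emph{rotation} group $\mathcal{SO}(P_i)$ on a two-dimensional block. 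On the dihedral blocks the generators are taken to be the $k_i-1$ nontrivial rotations, whose fixed spaces all have the same orthogonal complement $E_{i,j}=\mathcal E_i$ (the whole plane); these therefore contribute a single term to $m'$, which is why the count collapses to $m'\le \sum_i n_i^2\le n^2$. Without this switch from reflections to rotations on the planar blocks, the logarithmic bound on $m'$ is lost.

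The second place where your argument is thin is the uniform bound $\kappa\le C$. You propose to obtain it either by appealing to the examples treated in \S\ref{sec:symmetrisation}, or by a "crude" comparability of the $c_{j,i}$'s via transitivity of the $W_j$-action on reflections of a given length, together with a lower bound on the Cayley graph's spectral gap "by a constant times the minimal coefficient." The comparability of the $c_{j,i}$'s across conjugacy classes is not automatic (in types $B_n$, $F_4$, etc.\ there are several conjugacy classes of reflections with genuinely different coefficients), and the spectral gap lower bound you assert is essentially the content of the open comparison that the paper explicitly leaves unsettled for general irreducible types: the authors write that settling it ``is in principle possible'' via Diaconis--Shahshahani but do not do so. The paper's proof avoids this entirely, again via Lemma~\ref{lem:coxeter}: after passing to $G''$ the only factors are type $A$ (simplex, $\kappa=1/4$ by the Diaconis--Shahshahani computation) and the dihedral rotation group (complete Cayley graph, $\kappa\le 1/2$ by direct computation). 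So the explicit $\kappa=1/2$ comes from the reduction, not from a general theorem about reflection groups. To repair your argument you should replace "take all reflections of $G$" by the reduction of Lemma~\ref{lem:coxeter}, and on the planar blocks use rotations as generators; both the count $m'$ and the constant $\kappa$ then fall out of the explicit worked examples.
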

\begin{proof}
The group generated by the reflections $G$ that leave $\mu$ invariant might be quite large and we want to apply~\eqref{eq:poincsym} to a well chosen set of generators as discussed at the end of the previous section. Indeed,  the structure
of reflection groups is fully described by Coxeter's classification theorem. As explained in the appendix, there is an orthogonal 
decomposition $\R^n=\mathcal E_1 \oplus \cdots \oplus \mathcal E_\ell$, such that $G$ contains a subgroup $G'=\mathcal O(P_1)\times \cdots
\times \mathcal O(P_\ell)$ where $P_i$  is either a regular simplex of full dimension in $\mathcal E_i$ or a regular polygon
 if $\mathcal E_i$ is a plane. Set $G_i=\mathcal O(P_i)$ if $P_i$ is a regular simplex in dimension at least 3 and $G_i= \mathcal {SO}(P_i)$ if $P_i$
 is a regular polygon, and set $G'':=G_1\times \cdots\times G_\ell$. This subgroup of $G$ only fixes the origin.
 Let us describe precisely the generators of $G''$ that we consider:
\begin{itemize}
\item If $n_i:=\mathrm{dim}(\mathcal E_i)\ge 3$, then $\mathcal O(P_i)$ is generated by the $n_i(n_i+1)/2$ hyperplane symmetries of  a regular simplex in $\mathcal E_i$.
  We denote
 by $R_{i,j}$, $1\le j\le n_i(n_i+1)/2$ the hyperplane reflections on $\R^n$ which extend them to $\R^n$ (by acting as the identity on $\mathcal E_i^\bot$).
\item   If $n_i:=\mathrm{dim}(\mathcal E_i)=2$, then $P_i$ is a regular $k_i$-gon. We consider all its non-trivial rotations $R_{i,j}$, $j=1,\ldots,k_i-1$ as acting on 
 the whole $\R^n$. Obviously $E_{i,j}=\mathrm{Fix}(R_{i,j})^\bot =\mathcal E_i$ is of dimension 2 
 and $F_{i,j}=\mathrm{Fix}(R_{i,j})=\mathcal E_i^\bot$. These spaces do not
  depend on $j$, and so the contribute only once in~\eqref{def:m'}.
%  $$ \max_{1\le j\le k_i-1} \dE\Big[ | P_{E_{i,j}} X|^2\,  \big| P_{F_{i,j}} X \Big]=   \dE\Big[ | P_{E_{i,1}} X|^2\,  \big| P_{F_{i,1}} X \Big].$$
 \end{itemize} 
We define $\mathcal G''$ as the set of all $R_{i,j}$'s. It generates $G''$ and is stable by conjugacy. Moreover, by the remarks at the end of the previous section,
 $G'',\mathcal G''$ satisfy all the required hypotheses to carry on the previous analysis of $\poinc{\mu}$ (a decomposition of the identity
is obtained by putting together the ones on individual $\mathcal E_i$'s. Note that it is obvious when $n_i=2$. Also $\kappa=1/2$).
In order to conclude, we just need to estimate the number $m'$  (which is smaller than the number of generators in $\mathcal G''$):
$$ m'=\sum_{i, n_i\neq 2} \frac{n_i(n_i+1)}{2} + \mathrm{card}\big(\{i;\; n_i=2\}\big)\le 
   \sum_{i, n_i\neq 2} n_i^2 + \sum_{i;\; n_i=2} n_i^2 \le \big(\sum_i n_i\big)^2=n^2.$$
   Consequently, from~\eqref{eq:poincsym} we get that $\poinc{\mu}\le c\log(1+n)^2$. 
 \end{proof}
 
 The method also applies to the case of the Schatten classes. The notation here are those from the end of Section~\S\ref{varnorm}.
So  $X_{p,n}\sim \mu_{\lambda B_p^d}$ is an isotropic log-concave random vector uniformly distributed on a multiple $\lambda B_p^d$ of the unit ball of the Shatten space $S_{p}^d$. The dimension is $n=d^2$. Consider again the  linear applications $R_{i}$ which flip the signs of all the entries in the $i$-th row of a matrix, $i=1,\ldots, d$. The orthogonal of the fixed point subpaces decompose the identity (with constant coefficients), and the group generated by the $R_i$ is $\{-1,1\}^d$ (because the $R_i$'s commute and $R_i^2=\id$), which has also constant spectral gap. So the estimate~\eqref{eq:poincsym} applies and we get that
$$\poinc{X_{n,p}} \le c \sqrt n \, \log(1+n) ^2,$$
for some universal constant $c>0$.

\subsection{Anti-invariance of eigenfunctions}

The previous subsection recovers Klartag's bound on the spectral gap for isotropic unconditional bodies \cite{KlartagPTRF}, but by a different method.
Below we briefly present a streamlined version of his argument, which applies to rather general symmetries (even in comparison
to the previous subsection).

We consider a (strictly) log-concave probability measure $\mu$ with density $e^{-\Phi}$ with $\Phi$ smooth and $D^2 \Phi >0$. 
We also need that the spectral gap be achieved, i.e that there is  a (smooth)  $\varphi \in L^2(\mu)$ belonging to the domain of $L$ such that
$$L\varphi = - \lambda_1(\mu) \varphi, \qquad \text{where} \ \ \lambda_1(\mu)  := \frac1{\poinc{\mu}}.$$
This is the case if, for instance, $D^2 \Phi (x) \ge \varepsilon_0 \id$ for all $x\in \R^n$, for some $\varepsilon_0>0$.
Note that this condition may be ensured by adding  $\varepsilon_0 |x|^2/2$ to the potential $\Phi(x)$ for an arbitrarily small  $\varepsilon_0>0$.

\begin{proposition} Let $\mu$ be a strictly log-concave probability measure. Let  $R_1, \ldots ,R_m \in \O_n(\mu)$   such that  $\bigcap_{i\le m} \fix(R_i) = \{0\}$. 
Then, if $u$ is a non-zero eigenfunction for $\lambda_1(\mu)$, there exists  $j\le m$ such that the function
\begin{equation}\label{fonctionsym}
\varphi(x):=u(R_j x) - u(x), \qquad \forall x \in \R^n
\end{equation}
is again a non-zero eigenfunction for $\lambda_1(\mu)$. 
%This eigenfunction has the property that for every  $x\in \R^n$, setting $E_j= \fix(R_j)^\perp$, we have
%$$\int_{E_j} \varphi\, d\mu_{x,E_j}  \; = \; 0.$$
\end{proposition}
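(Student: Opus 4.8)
The plan is to show that $\varphi$ is automatically an eigenfunction for $\lambda_1(\mu)$ as soon as it is non-zero, and then to rule out the only obstruction to its non-vanishing, namely the possibility that $u\circ R_j=u$ for \emph{every} $j\le m$.

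The first point rests on the commutation relation $L(u\circ R_j)=(Lu)\circ R_j$, valid for any $R_j\in\O_n(\mu)$. Indeed, since $R_j$ is orthogonal, $\Delta(u\circ R_j)=(\Delta u)\circ R_j$, while the drift term at $x$ equals $\nabla\Phi(x)\cdot R_j^{-1}\nabla u(R_jx)=\bigl(R_j\nabla\Phi(x)\bigr)\cdot\nabla u(R_jx)=\nabla\Phi(R_jx)\cdot\nabla u(R_jx)$, using the $R_j$-invariance of $\Phi$. Hence $u\circ R_j$ is again an eigenfunction for $\lambda_1(\mu)$, and since the $\lambda_1(\mu)$-eigenspace is a linear subspace of $L^2(\mu)$, so is $\varphi=u\circ R_j-u$, for every $j$. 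Thus it only remains to prove that $\varphi\not\equiv0$ for at least one $j$.

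Suppose, towards a contradiction, that $u\circ R_j=u$ for all $j\le m$; then $u$ is invariant under $G:=\langle R_1,\dots,R_m\rangle$ and $\fix(G)=\bigcap_j\fix(R_j)=\{0\}$. The crucial intermediate step is that then $\int\nabla u\,d\mu=0$: by Fact~\ref{fact:antisymmetry} applied to $R_i$ and $E_i=\fix(R_i)^\perp$ we have $\int P_{E_i}\nabla u\,d\mu_{x,E_i}=0$ for every $x$, and integrating in $x$ via~\eqref{fubini} gives $P_{E_i}\bigl(\int\nabla u\,d\mu\bigr)=0$; hence $\int\nabla u\,d\mu\in\bigcap_i E_i^\perp=\bigcap_i\fix(R_i)=\{0\}$. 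In particular, $\partial_v u$ has zero $\mu$-mean for every direction $v$. Now, since $\Phi$ is smooth $u$ is smooth by elliptic regularity, and the standing assumption $D^2\Phi\ge\varepsilon_0\id$ provides enough decay of $u$ and its derivatives to justify the integrations by parts below; differentiating $Lu=-\lambda_1(\mu)u$ gives, componentwise, $L(\nabla u)=D^2\Phi\,\nabla u-\lambda_1(\mu)\,\nabla u$, so for a unit vector $v$ the function $w:=\partial_v u$ satisfies $Lw=(D^2\Phi\,\nabla u)\cdot v-\lambda_1(\mu)\,w$, whence
$$\int|\nabla w|^2\,d\mu=\lambda_1(\mu)\int w^2\,d\mu-\int\partial_v u\,\bigl((D^2\Phi\,\nabla u)\cdot v\bigr)\,d\mu.$$
Since $w$ has zero mean, the Poincar\'e inequality for $\mu$ yields $\int|\nabla w|^2\,d\mu\ge\lambda_1(\mu)\int w^2\,d\mu$, so $\int\partial_v u\,\bigl((D^2\Phi\,\nabla u)\cdot v\bigr)\,d\mu\le0$ for every unit vector $v$. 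Summing this over an orthonormal basis $v=e_1,\dots,e_n$ gives $\int D^2\Phi\,\nabla u\cdot\nabla u\,d\mu\le0$, contradicting $D^2\Phi>0$ together with $\nabla u\not\equiv0$ (which holds because $u$, being orthogonal to the constants, is non-constant). This contradiction shows $u$ cannot be $G$-invariant, so some $\varphi$ is a non-zero eigenfunction.

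The main obstacle is the third step: recognizing that $G$-invariance of $u$ with $\fix(G)=\{0\}$ forces every first-order partial of $u$ to be $\mu$-centered, and then extracting from the differentiated eigenequation the direction-by-direction sign condition that a pointwise positive definite Hessian cannot satisfy after summation over a basis. By comparison, the commutation identity of the second step and the elliptic-regularity and integrability bookkeeping are routine.
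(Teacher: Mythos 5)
Your argument is correct and follows the paper's strategy closely: assume $u\circ R_j=u$ for all $j$, deduce $\int\nabla u\,d\mu=0$, apply the Poincar\'e inequality to each partial derivative of $u$ (each having zero $\mu$-mean), and contradict the strict positivity of $D^2\Phi$. The only real variation is presentational: where the paper cites the Bochner identity~\eqref{bochner} and tracks the equality case in a chain of inequalities, you differentiate the eigenequation $Lu=-\lambda_1(\mu)u$ direction by direction and then sum over an orthonormal basis---but that summed identity is precisely~\eqref{bochner} specialized to an eigenfunction, so the two computations coincide.
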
 

It is worth noting that when the $R_i$'s are such that $R_i^2=\mathrm{Id}$ (for instance when the $R_i$ are reflections or $-\id$), 
then the function $\varphi$ of the theorem verifies $\varphi(R_j x)= -\varphi(x)$ for all  $x\in \R^n$.
In the case of even measures and of unconditional measures, the existence of an eigenfunction verifying this anti-symmetry was proved by Klartag~\cite{KlartagPTRF}.

\begin{proof}
Let $u$ be an eigenfunction. Note that the functions $u\circ R_i$ are also eigenfunction, so if the first conclusion of the theorem is false 
(i.e.  if for any choice of $j$ the corresponding  $\varphi$ is zero), it means that for all $i\le m$ we have that $u\circ R_i = u$. 
Taking gradients, integrating with respect to $\mu$ and using its invariance by $R_i$ then gives $R_i^{-1} \int \nabla u \, d\mu=\int \nabla u \, d\mu$.
Hence $\int \nabla u \, d\mu \in \cap_i \mathrm{Fix}(R_i)$. 
Our assumptions on the $R_i$'s then implies that 
$$\int \nabla u \, d\mu = 0 .$$
Consequently,
$\int \partial_i u \, d\mu =0$ for $i=1, \ldots , n$, and therefore by the Poincar\'e inequality and by~\eqref{bochner}, we have  
\begin{gather*}
\lambda_1(\mu) \int u^2 \, d\mu = \int |\nabla u|^2\, d\mu \le \frac1{\lambda_1(\mu)} \sum_{i=1}^n  \int |\nabla \partial_i u |^2\, d\mu =  \frac1{\lambda_1(\mu)}  \int \|D^2 u \|^2\, d\mu \\
\le  \frac1{\lambda_1(\mu)}  \int (Lu)^2 \, d\mu = \lambda_1(\mu) \int u^2 \, d\mu.
\end{gather*}
Equality means, from the use of~\eqref{bochner},  that    
$D^2 \Phi (\nabla u) \cdot \nabla u = 0$ on  $\R^n$ and so $\nabla u = 0$. Therefore  $u$ is constant, and this constant must be zero ($\lambda_1>0$). 
This shows that when we start from a non-zero eigenfunction $u$, we can indeed find a $j\le m$ such that the corresponding $\varphi= u\circ R_j - u$ is a non-zero eigenfunction.  

\end{proof}

The interest of having a non-zero eigenfunction of the form~\eqref{fonctionsym} comes from the  mean zero property given by Fact~\ref{meanzero}.
Using the same argument as Klartag~\cite{KlartagPTRF}, we can deduce from the existence of an ``anti-invariant" eigenfunction an estimation on the spectral gap, similar to the bound~\eqref{eq:var3bis} holding under different hypotheses.

\begin{corollary}\label{poinclogconcave1}
 Let $\mu$ be a log-concave function on $\R^n$ and  $R_1, \ldots ,R_m \in \O_n(\mu)$ such that  $\bigcap_{i\le m} \fix(R_i) = \{0\}$.
 Assumed that the spectral gap is achieved. Then we have, setting $E_i= \fix(R_i)^\perp$,
 $$\poinc{\mu} \le \max_{i\le m}\sup_{x\in \fix(R_i)}  \poinc{\mu_{x,E_i}}.$$
\end{corollary}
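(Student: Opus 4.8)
The plan is to combine the preceding Proposition, which produces a non-zero eigenfunction of the special form $\varphi(x) = u(R_j x) - u(x)$ for some $j \le m$, with Fact~\ref{meanzero}, which tells us that such a $\varphi$ has zero mean against every conditioned measure $\mu_{x,E_j}$. First I would invoke the Proposition: since $\bigcap_i \fix(R_i) = \{0\}$ and the spectral gap is achieved with eigenfunction $u$, there is an index $j \le m$ for which $\varphi(x) := u(R_j x) - u(x)$ is a non-zero eigenfunction for $\lambda_1(\mu) = 1/\poinc{\mu}$. Set $E_j = \fix(R_j)^\perp$ and $F_j = \fix(R_j)$.

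Next I would apply Fact~\ref{meanzero} to $\varphi$ itself. Writing $\varphi = u \circ R_j - u$, Fact~\ref{meanzero} gives $\int \varphi \, d\mu_{x,E_j} = 0$ for every $x \in \R^n$. Since $\mu_{x,E_j}$ depends only on $P_{F_j} x$, and $\varphi$ restricted to the affine slice $P_{F_j}x + E_j$ may be regarded as a function on $E_j$, I can apply the Poincaré inequality for the log-concave measure $\mu_{x,E_j}$ to the zero-mean function $\varphi$: for each $x$,
$$\int \varphi^2 \, d\mu_{x,E_j} \le \poinc{\mu_{x,E_j}} \int |P_{E_j} \nabla \varphi|^2 \, d\mu_{x,E_j}.$$
Integrating this over $x \sim \mu$ using Fubini~\eqref{fubini} yields $\int \varphi^2 \, d\mu \le \int \poinc{\mu_{x,E_j}} |P_{E_j}\nabla \varphi|^2 \, d\mu$, and then bounding $\poinc{\mu_{x,E_j}} \le \sup_{x} \poinc{\mu_{x,E_j}}$ (noting again this depends only on $P_{F_j}x \in \fix(R_j)$, which matches the sup in the statement) and $|P_{E_j}\nabla\varphi| \le |\nabla\varphi|$, we get $\int \varphi^2 \, d\mu \le \big(\sup_{x \in \fix(R_j)} \poinc{\mu_{x,E_j}}\big) \int |\nabla\varphi|^2 \, d\mu$.

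Finally I would use that $\varphi$ is an eigenfunction for $\lambda_1$: by~\eqref{ipp1}, $\int |\nabla\varphi|^2 \, d\mu = -\int \varphi L\varphi \, d\mu = \lambda_1(\mu)\int \varphi^2 \, d\mu$. Since $\varphi \ne 0$ we may divide by $\int|\nabla\varphi|^2\,d\mu > 0$, obtaining $\lambda_1(\mu)^{-1} \le \sup_{x \in \fix(R_j)} \poinc{\mu_{x,E_j}} \le \max_{i \le m} \sup_{x \in \fix(R_i)} \poinc{\mu_{x,E_i}}$, which is exactly $\poinc{\mu} \le \max_{i\le m}\sup_{x\in\fix(R_i)}\poinc{\mu_{x,E_i}}$. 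The only genuinely delicate point, and the one I would be most careful about, is bookkeeping the identification of $\varphi$ restricted to a slice with a function on $E_j$ and checking that $P_{E_j}\nabla\varphi$ agrees with the gradient of that restriction — this is routine since $\mu_{x,E_j}$ lives on the affine subspace $P_{F_j}x + E_j$ — together with making sure the $\sup$ is correctly taken over $\fix(R_i)$ rather than all of $\R^n$, which is legitimate precisely because $\mu_{x,E_i} = \mu_{P_{F_i}x, E_i}$.
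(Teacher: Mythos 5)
Your proposal is correct and follows essentially the same route as the paper: invoke the preceding Proposition to produce the anti-invariant eigenfunction $\varphi = u\circ R_j - u$, use Fact~\ref{meanzero} to see $\int\varphi\,d\mu_{x,E_j}=0$, apply the Poincar\'e inequality for $\mu_{x,E_j}$, integrate via Fubini, and close with the eigenvalue identity $\int|\nabla\varphi|^2\,d\mu = \poinc{\mu}^{-1}\int\varphi^2\,d\mu$. The extra care you take over the identification of the restricted gradient and the domain of the supremum is sound but routine, as you note.
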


\begin{proof}
By the previous Proposition and Fact~\ref{meanzero},
 we know that there exists $j\le m$ and a non-zero eigenfunction such that $\int_{E_j} \varphi\, d\mu_{x,E_j}  \; = \; 0$ for all $x\in \R^n$. This implies that 
$$\int_{E_j} \varphi^2 \, d\mu_{x,E_j}  \le \poinc{\mu_{x,E_j}} \int_{E_j} |P_{E_j}\nabla \varphi|^2 \, d\mu_{x,E_j} \le C(\mu) \int_{E_j} |\nabla \varphi|^2 \, d\mu_{x,E_j} ,$$
setting $C(\mu) := \max_{i\le m}\sup_{x\in \fix(R_i)}  \poinc{\mu_{x,E_i}}$. The conclusion follows from Fubini's theorem, since by definition $\int|\nabla \varphi |^2 \, d\mu = \poinc{\mu}^{-1} \int \varphi^2 \, d\mu$.
\end{proof}

To follow Klartag's proof in order to reach bounds depending only on $n$ for isotropic measures, one  upper bounds $\poinc{\mu_{x,E_i}}$ as before,
considers the probability $\mu_{|A}$ measure obtained by restricting $\mu$ to a set $A=\{x\in\R^n; \forall i\le m,\, |P_{E_i}x|\le C\},$
where $C$ should be tuned to ensure  $\mu(A)\ge 1/2$ and thus $d_{TV}(\mu,\mu_{|A})\le 1$ (while the maximum distance is 2).
Another result by E. Milman \cite{E.M} ensures  $\poinc{\mu}\approx \poinc{\mu_{|A}}$. If $\mu_{|A}$ has the same invariances (which requires
stability by conjugacy of the set of isometries), then applying the previous theorem to $\mu_{|A}$ gives $\poinc{\mu_{|A}}\le c' C^2$.
Eventually, one chooses $C$ to be an upper estimate of $2 \dE \max_i |P_{E_i}X|$ which is derived from the $\psi_1$ property of 
log-concave distributions. We skip the details.

\section{An application to conservative spin systems}
\label{sec:spin}
We now apply the previous tools to a conservative non-interacting unbounded spin system.
Below, $\mu$ will be a probability measure on $\R$ of the form $d\mu(t)=e^{-V(t)} dt$. For $n\ge 2$ and $m\in \R$, we consider
the probability measure obtained by restriction of the product measure $\mu^n=\mu\otimes\ldots\otimes\mu$ on $\R^n$ 
to the affine hyperplane $H^n_m=\{x\in \R^n;\; \sum_i x_i=nm\}$:
% inherited from $\R^ n$):
$$\mu^{n|m}:= \mu^n\Big( \;\cdot\;\big| \sum_{i=1}^n x_i=nm\Big) .$$
Equivalently, the density of $\mu^{n|m}$ with respect to Lebesgue's measure of $H^n_m$ is proportional
to $$\exp\left(-\sum_{i=1}^n V(x_i)\right).$$
As always, $H^n_m\simeq H^n_0\simeq \R^{n-1}$ is viewed as an Euclidean space for the structure inherited from $\R^n$.

Ergodic inequalities have been studied for these measures. Varadhan posed the following question \cite{V}: for which
kind of single-site potentials $V$ is it true that the measures $\mu^{n|m}$ have a uniform spectral gap, i.e.
$\sup_{n,m} \poinc{\mu^{n|m}}<+\infty$? The same question may be also asked for the stronger logarithmic Sobolev 
inequalities. After contributions by  Landim-Panizo-Yau \cite{LPY}, Caputo \cite{C}, Chafai \cite{Ch}, Grunewald-Otto-Villani-Westdickenberg \cite{GOVW},
a very satisfactory answer was recently given by Menz and Otto \cite{MO}. They show that if $V=\phi+\psi$, with
$\inf_x\varphi''(x)>0$ and $\|\psi\|_\infty, \|\psi'\|_\infty<+\infty$, then the measures $\mu^{n|m}$ satisfy a
log-sobolev inequality with constants which do not depend on $n,m$.
In particular, if $V$ is a bounded and Lipschitz perturbation of a strictly uniformly convex function, then 
$\sup_{n,m} \poinc{\mu^{n|m}}<+\infty$.

The arguments of these articles are quite involved. They rely heavily on the fact that the spectral gap 
is uniform, which allows induction on coordinates or coarse-graining approaches. Our goal is to show
how the symmetries can be exploited to provide a soft proof of various results for these measures.
Some of them apply to the sub-quadratic case where it is known that the spectral gap cannot be uniform
in $m$. Not much is known in this case (note that the first named author and Wolff \cite{BW} have obtained
precise estimates of the spectral gap and log-Sobolev constants when $\mu$ is a Gamma distribution with parameter
at least 1).

The measure $\mu^n$ is obviously invariant by permutations of coordinates. Since $H^n_m=\{x\in \R^n;\; \sum_i x_i=nm\}$
is invariant as well, it follows that $\mu^{n|m}$ is invariant by the restrictions to $H^n_m$ of permutations of coordinates
of $\R^n$. They obviously act as a permutation of the points $nm e_1,\ldots, nm e_m$ which form a regular simplex in $H_m^n$
with barycenter at  $(m,\ldots,m)$. It is convenient to consider this point as the new origin for this hyperplane.
Summarizing, $\mu^{n|m}$ is invariant by the group of isometries of a regular simplex.  Our previous results apply, and require
the estimation of the Poincar\'e constants of restrictions to lines which are orthogonal to the hyperplanes of symmetries. These restrictions
have a very simple structure, as we put forward next.

For $1\le i<j\le n$, let $S_{i,j}$ the hyperplane symmetry defined on $\R^n$ by $S_{i,j}(e_k)=e_{\tau_{i,j}(k)}$ where $\tau_{i,j}$
is the transposition of $i$ and $j$, i.e. $S_{i,j}=S_{(e_i-e_j)^\perp}$. We shall view  $S_{i,j}$ as a hyperplane symmetry of $H^n_m$. Setting $E_{i,j}:=\mathrm{Fix}(S_{i,j})^\bot= \R (e_i-e_j)$, we can note that for $x\in H^n_m$ we have $x+E_{i,j}\subset H_m^n$.

\begin{lemma}\label{lem:mu2}
Let $n\ge 2$. For any $x\in H^n_m$ and all $i<j$,
$$\poinc{\mu^{n|m}_{x,E_{i,j}}}=\poinc{\mu^{2\, |(x_i+x_j)/2}}.$$
\end{lemma}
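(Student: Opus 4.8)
The plan is to unwind both sides into explicit one-dimensional measures and to exhibit an affine isometry carrying one onto the other; since the Poincar\'e constant depends only on the measure and the ambient Euclidean structure, and is therefore invariant under affine isometries, this will give the equality.

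First I would describe the left-hand side. Fix $x\in H^n_m$ and $i<j$. Since $x+E_{i,j}\subset H^n_m$, the line $L:=\{x+t(e_i-e_j);\ t\in\R\}$ lies in the support of $\mu^{n|m}$, and conditioning $\mu^{n|m}$ on $P_{E_{i,j}^\perp}x$ amounts to restricting its density to $L$. Along $L$ the point $x+t(e_i-e_j)$ has $k$-th coordinate $x_k$ for $k\notin\{i,j\}$, $i$-th coordinate $x_i+t$ and $j$-th coordinate $x_j-t$; hence the density of $\mu^{n|m}_{x,E_{i,j}}$, viewed on $L$ with the Euclidean structure inherited from $\R^n$, is proportional to
\[
\exp\Big(-\sum_{k\neq i,j}V(x_k)-V(x_i+t)-V(x_j-t)\Big)\ \propto\ \exp\big(-V(x_i+t)-V(x_j-t)\big),
\]
the constant factor $\exp(-\sum_{k\ne i,j}V(x_k))$ disappearing after normalization. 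Similarly, $\mu^{2|(x_i+x_j)/2}$ is supported on the line $L':=\{(a,b)\in\R^2;\ a+b=x_i+x_j\}$, with density there proportional to $\exp(-V(a)-V(b))$.

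Next I would build the comparison map. Define $T:L'\to L$ by $T(a,b):=x+(a-x_i)e_i+(b-x_j)e_j$; this is well defined since $(a-x_i)+(b-x_j)=0$ forces $(a-x_i)e_i+(b-x_j)e_j=(a-x_i)(e_i-e_j)\in E_{i,j}$, and $T$ is affine with linear part sending the direction $(1,-1)$ of $L'$ to $e_i-e_j$, both vectors of norm $\sqrt2$; hence $T$ is an isometry of $L'$ onto $L$. Moreover $T(a,b)$ has $i$-th coordinate $a$, $j$-th coordinate $b$, and $k$-th coordinate $x_k$ otherwise, so $T$ pushes the density $\exp(-V(a)-V(b))$ forward to the density $\exp(-V(x_i+t)-V(x_j-t))$ computed above; that is, $T_*\mu^{2|(x_i+x_j)/2}=\mu^{n|m}_{x,E_{i,j}}$. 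Applying the isometric invariance of $\poinc{\cdot}$ then yields the claim.

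There is no serious obstacle here; the only point requiring care is the bookkeeping of the Euclidean structures on the two affine lines. One must check that parametrizing $L$ by $e_i-e_j$ and $L'$ by $e_1-e_2$ produces matching metrics (both directions having norm $\sqrt2$), so that no spurious scaling factor enters and $T$ is a genuine isometry rather than a mere affine bijection — otherwise the two Poincar\'e constants would differ by the square of that scaling factor.
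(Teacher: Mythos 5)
Your proof is correct and is essentially the same argument as the paper's: where you exhibit an explicit affine isometry $T:L'\to L$ and check that it pushes $\mu^{2|(x_i+x_j)/2}$ forward to $\mu^{n|m}_{x,E_{i,j}}$, the paper instead parametrizes both lines at unit speed and reads off that the two one-dimensional densities coincide after normalization; the content (identification of the two lines by an isometry and the cancellation of the constant factor $\exp(-\sum_{k\neq i,j}V(x_k))$) is identical. Your explicit attention to the $\sqrt{2}$ normalization is exactly the point the paper handles by insisting on unit-speed parametrizations.
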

\begin{proof}
Since permutations of coordinates leave $\mu^{n|m}$ invariant, we can  assume that $i=1$, $j=2$.
Then
\begin{eqnarray*}
x+E_{1,2}&=& \left\{\Big(x_1+\frac{s}{\sqrt2},x_2-\frac{s}{\sqrt2},x_3,\ldots,x_n\Big);\; s\in \R \right\}\\
&=& \left\{\Big(\frac{x_1+x_2}{2}+\frac{t}{\sqrt2},\frac{x_1+x_2}{2}-\frac{t}{\sqrt2},x_3,\ldots,x_n\Big);\; t\in \R \right\},
\end{eqnarray*}
where we have chosen parametrizations of unit speed.
Hence the density of $\mu^{n|m}$ at point of $x+E_{1,2}$ is proportional to 
$$ \exp\left(-V\Big(\frac{x_1+x_2}{2}+\frac{t}{\sqrt2} \Big)-V\Big(\frac{x_1+x_2}{2}-\frac{t}{\sqrt2} \Big)-V(x_3)-\cdots-V(x_n)\right).$$
Note that the last $n-2$ terms are constant on $x+E_{1,2}$. Hence they disappear when conditioning  $\mu^{n|m}$ to this line: 
$$\mu^{n|m}_{x,E_{i,j}}(dt)= \frac1Z  \exp\left(-V\Big(\frac{x_1+x_2}{2}+\frac{t}{\sqrt2} \Big)-V\Big(\frac{x_1+x_2}{2}-\frac{t}{\sqrt2} \Big)\right) \, dt,$$
where $Z$ is the normalization constant. 
This measure corresponds to the normalized restriction of $\mu^2$ to the line $\{y\in \R^2;\; y_1+y_2=x_1+x_2\}$, that is to $\mu^{2|(x_1+x_2)/2}.$
\end{proof}

The next statement is then an obvious consequence of the results of the Section~\ref{sec:symmetrisation}.
\begin{proposition}\label{prop:2n}
Assume that there exists $\alpha\in \R^+$ such that for all $t\in \R$, $V''(t)\ge -\alpha$ and $\sup_m \poinc{\mu^{2|m}}<\frac1\alpha$.
Then, 
$$ \sup_{n,m} \poinc{\mu^{n|m}} \le 2 \sup_m \left(\poinc{\mu^{2|m}}^{-1}-\alpha \right)^{-1}.$$
\end{proposition}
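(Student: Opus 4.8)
The plan is to verify that the hypotheses of Theorem~\ref{th:vargeneral} (or rather its consequence~\eqref{eq:var3bis}) are satisfied for $\mu^{n|m}$ with the group $G$ of isometries of the regular simplex, acting on $H^n_m$ (identified with $\R^{n-1}$), and generated by the reflections $S_{i,j}$, $1\le i<j\le n$. First I would recall from Section~\ref{sec:symmetrisation} (the second bullet in the list of examples) that this generating set $\mathcal G = \{S_{i,j}\}$ is stable under conjugacy in $G$, that the corresponding decomposition of the identity is $\frac{2}{n}\sum_{i<j} P_{E_{i,j}} = \mathrm{Id}$ on $H^n_m$ (so $c_{i,j}=2/n$ for all pairs), and that the Diaconis--Shahshahani computation gives $\poinc{\mathcal G} = \frac{1}{2n}$, which is exactly $\min_{i<j} c_{i,j}/4$. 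Hence the constant $\kappa = \poinc{\mathcal G}/\min c_{i,j}$ equals $1/4$, and the multiplicative factor $(1+4\kappa)$ appearing in~\eqref{eq:var3bis} equals $2$.

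Next I would identify $\rho = -\alpha$: the potential of $\mu^{n|m}$ on $H^n_m$ is $x\mapsto \sum_i V(x_i)$, whose Hessian is the restriction to $H^n_m$ of the diagonal matrix $\mathrm{diag}(V''(x_1),\dots,V''(x_n))$; since $V''\ge -\alpha$ pointwise, this Hessian is $\ge -\alpha\,\mathrm{Id}$ on $H^n_m$, so the hypothesis $D^2\Phi \ge -\alpha\,\mathrm{Id}$ holds. The remaining hypothesis to check is $\poinc{\mu^{n|m}_{x,E_{i,j}}} < 1/\alpha$ for all $x$ and all pairs $i<j$; but by Lemma~\ref{lem:mu2} this conditioned measure is exactly $\mu^{2\,|(x_i+x_j)/2}$, so the condition reduces precisely to $\sup_m \poinc{\mu^{2|m}} < 1/\alpha$, which is assumed. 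Therefore~\eqref{eq:var3bis} applies with $\alpha$ as above and yields, for every locally Lipschitz $f$,
$$\var_{\mu^{n|m}}(f) \le (1+4\kappa)\,\sup_{i<j,\,x}\big(\poinc{\mu^{n|m}_{x,E_{i,j}}}^{-1}-\alpha\big)^{-1}\int |\nabla f|^2\,d\mu^{n|m},$$
and substituting $\kappa = 1/4$ and, via Lemma~\ref{lem:mu2}, $\sup_{i<j,x}\poinc{\mu^{n|m}_{x,E_{i,j}}} = \sup_m \poinc{\mu^{2|m}}$, gives exactly $\poinc{\mu^{n|m}} \le 2\,\big(\sup_m\poinc{\mu^{2|m}}^{-1}-\alpha\big)^{-1}$. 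Taking the supremum over $n\ge 2$ and $m\in\R$ (the bound being uniform) completes the proof.

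The only genuinely non-routine point is making sure the group-theoretic bookkeeping on $H^n_m$ is legitimate: namely that the simplex example from Section~\ref{sec:symmetrisation} applies verbatim when the ambient space is the affine hyperplane $H^n_m$ with origin relocated to its barycenter $(m,\dots,m)$, and that the values $c_{i,j}=2/n$ and $\poinc{\mathcal G}=\frac{1}{2n}$ are the ones for a simplex with $n$ vertices in dimension $n-1$ (so that~\eqref{structcond} holds trivially since all generators are conjugate and all weights equal). Once that identification is in place, everything else is a direct substitution into the already-established inequality~\eqref{eq:var3bis}; I do not anticipate any analytic obstacle, since log-concavity of $\mu^{n|m}$ is not needed here (only the lower Hessian bound and the finiteness of the one-dimensional restricted Poincar\'e constants).
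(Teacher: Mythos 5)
Your proposal is correct and follows essentially the same route as the paper. The paper's own proof is terser (it simply notes that the potential of $\mu^{n|m}$ has Hessian bounded below by $-\alpha\,\mathrm{Id}$, that $\mu^{n|m}$ has the symmetries of the regular simplex, and then invokes Theorem~\ref{th:vargeneral} ``by the remarks after~\eqref{eq:var3bis} in the case of invariances by the symmetric group'', together with Lemma~\ref{lem:mu2}); you have unpacked exactly those remarks, correctly transposing the simplex example of Section~\ref{sec:symmetrisation} to the $(n-1)$-dimensional setting of $H^n_m$ (with $n$ vertices, $c_{i,j}=2/n$, $\poinc{\mathcal G}=1/(2n)$, hence $\kappa=1/4$ and factor $2$), verifying the Hessian bound and the restricted Poincar\'e condition via Lemma~\ref{lem:mu2}, and then substituting into~\eqref{eq:var3bis}.
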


\begin{proof}
Recall that for a measure $d\nu(x)=e^{-W(x)}dx/Z$, it is convenient to call $W$ a potential of $\nu$.
Since $\mu$ has a potential with second derivative bounded from below by $-\alpha$,
so does the product measure $\mu^n$: the Hessian of its potential is bounded from below by $-\alpha \mathrm{Id}$.
By restriction, this property passes to $\mu^n_m$. Since the latter measure has the symmetries of the 
regular simplex, we are in a position to apply Theorem \ref{th:vargeneral} by the remarks after~\eqref{eq:var3bis} in the case of invariances by the symmetric group. Lemma \ref{lem:mu2} allows us to deal with the restrictions to lines which are orthogonal to hyperplanes
of symmetries.
\end{proof}

In the sequel, let us adopt the notation $a\approx b$ for the existence of numerical constants $c,C>0$ such that $ca \le b \le c B$; by numerical constants we mean  universal computable constants (and so independent of $n$, $m$, $V$, etc.)

In the case convex single-site potentials, we can get a precise quantitative estimate of the 
uniform Poincar\'e constant (finite or infinite):

\begin{proposition} 
If the potential $V$ is convex, then
$$ \sup_{n,m} \poinc{\mu^{n|m}} \approx \sup_{m} \poinc{\mu^{2|m}}\approx
 \sup_{m\in\R} \left( \int_{0}^{+\infty} e^{-\big[V(m+t)+V(m-t)-2V(m)\big]}dt\right)^2.$$
 \end{proposition}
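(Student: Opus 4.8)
The plan is to prove the two-sided estimate in two stages. First, the equivalence $\sup_{n,m}\poinc{\mu^{n|m}}\approx \sup_m \poinc{\mu^{2|m}}$: the lower bound $\gtrsim$ is immediate since for $n=2$ the measure $\mu^{2|m}$ is literally one of the measures $\mu^{n|m}$; for the upper bound $\lesssim$ I would apply Proposition~\ref{prop:2n} together with Lemma~\ref{lem:mu2}, but now using convexity of $V$ to avoid the $\alpha$-correction. Indeed when $V$ is convex one may take $\alpha=0$ in Proposition~\ref{prop:2n} (the Hessian of the potential of $\mu^n$, and hence of its restriction, is $\ge 0$), provided $\sup_m\poinc{\mu^{2|m}}<+\infty$; and if the latter supremum is infinite the claimed equivalence is trivially an equality $+\infty=+\infty$. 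So the first $\approx$ reduces entirely to the machinery of Section~\ref{sec:symmetrisation} already invoked in Proposition~\ref{prop:2n}.

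The second stage is to identify $\sup_m\poinc{\mu^{2|m}}$ with the explicit integral expression. Here I would reparametrize the one-dimensional measure $\mu^{2|m}$: as computed in the proof of Lemma~\ref{lem:mu2}, up to the unit-speed change of variable its density on $\R$ is proportional to $\exp(-V(m+s)-V(m-s))$ (absorbing the harmless $\sqrt2$ scaling, which only rescales the Poincar\'e constant by a fixed factor and is therefore invisible to $\approx$). Since $V$ is convex, the function $s\mapsto W_m(s):=V(m+s)+V(m-s)-2V(m)$ is convex, even, nonnegative, and nondecreasing on $[0,+\infty)$; thus $e^{-W_m}$ is a symmetric log-concave density on $\R$. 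For such a measure, the Poincar\'e constant is, up to universal constants, the square of its ``width'', and a clean way to pin this down is the Bobkov--Houdr\'e / Muckenhoupt-type characterization: for a symmetric log-concave probability density $\rho$ on $\R$, $\poinc{\rho}\approx \big(\int_0^\infty \rho(s)\,ds \,/\, \rho(0)\big)^2$ — equivalently $\approx$ the square of the median (or of $1/(2\rho(0))$). Applying this with $\rho(s)\propto e^{-W_m(s)}$ gives $\rho(0)=1/Z_m$ with $Z_m=\int_\R e^{-W_m}=2\int_0^\infty e^{-W_m(s)}ds$, so $\int_0^\infty\rho = \tfrac12$ and $\poinc{\mu^{2|m}}\approx Z_m^2/4\approx \big(\int_0^\infty e^{-[V(m+t)+V(m-t)-2V(m)]}\,dt\big)^2$. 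Taking the supremum over $m$ finishes the proof.

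\textbf{Main obstacle.}
The delicate point is the one-dimensional estimate $\poinc{\mu^{2|m}}\approx \big(\int_0^\infty e^{-W_m}\big)^2$ with constants \emph{uniform in $m$}. The upper bound direction is the substantive one: it is exactly the Muckenhoupt criterion for a Poincar\'e (Hardy) inequality on $\R$, which for a symmetric log-concave weight reduces to controlling $\sup_{r>0}\big(\int_{|s|>r}\rho\big)\big(\int_{|s|<r}1/\rho\big)$; log-concavity makes this supremum comparable to its value measured at the scale where $W_m$ is of order one, which is comparable to $Z_m^2$. One must be slightly careful that $W_m$ depends on $m$ and could degenerate (e.g.\ $V$ affine on a half-line), but convexity of $W_m$ plus $W_m(0)=0$ keeps all the relevant quantities comparable, so the constants that come out are genuinely numerical. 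The matching lower bound is easier — test the Poincar\'e inequality against a truncation of the identity function $s\mapsto s$, or invoke the KLS bound~\eqref{KLS} in dimension one — and produces $\poinc{\mu^{2|m}}\gtrsim \var_{\mu^{2|m}}(s)\gtrsim Z_m^2$ by direct computation of the variance of a symmetric log-concave law in terms of its normalizing constant.
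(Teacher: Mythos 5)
Your proposal is correct and follows essentially the same route as the paper: Proposition~\ref{prop:2n} with $\alpha=0$ (plus the trivial lower bound since $\mu^{2|m}$ is itself one of the $\mu^{n|m}$) handles the first $\approx$, and the second $\approx$ reduces to the one-dimensional fact that, for a symmetric log-concave law on the line, the Poincar\'e constant is comparable to the square of the reciprocal of the density at the origin. The only cosmetic difference is in how you reach that one-dimensional fact: you invoke it as a single Muckenhoupt/Bobkov--Houdr\'e-type statement (and sketch a Hardy-weight argument), whereas the paper factors it as $\poinc{\mu^{2|m}}\approx\var_{\mu^{2|m}}(t)$ (Bobkov's sharpness of~\eqref{KLS} in dimension one) combined with the classical even log-concave bound $\tfrac13\le f(0)^2\int_0^\infty t^2f/(\int_0^\infty f)^3\le 2$; both routes are valid and yield the same formula after the unit-speed reparametrization.
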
 
 \begin{proof}
 Applying the previous Proposition with $\alpha=0$ (recall the convention for non-negative numbers
 $1/0=+\infty$) gives
 $$  \sup_{m} \poinc{\mu^{2|m}}\le  \sup_{n,m} \poinc{\mu^{n|m}}\le 2 \sup_{m} \poinc{\mu^{2|m}},  $$
 which gives the first approximate equality. Note that applying Corollary~\ref{poinclogconcave1} would remove the
 factor 2 and give an equality (this may require an approximation argument in order to ensure the existence
 of an eigenfunction corresponding to the spectral gap).

Note that $\mu^{2|m}$ can be viewed as the probability measure on $\R$ with density $f(t) dt/ \int f$, where
$$ f(t)=\exp\left( -V\Big(m+\frac{t}{\sqrt 2} \Big)+ V\Big(m-\frac{t}{\sqrt 2} \Big)\right)$$
is an even log-concave function on $\R$. A result of Bobkov~\cite{Bobkov:1999vk} (see also~\cite{KLS}) states that for log-concave measures on the line, the bound~\eqref{KLS} is sharp, and so
$$\poinc{\mu^{2|m}}\approx \var_{\mu^{2|m}}(t)= \frac{\int_\R t^2 f(t) dt}{\int_\R f(t) \,dt}.$$
Next, we apply a very classical fact about even log-concave measures on the real line (see e.g.  \cite{Ball,MP}):
$$ \frac13 \le \frac{f(0)^2 \int_{0}^{+\infty} t^2 f(t)\, dt}{\left( \int_0^{+\infty} f(t)\, dt\right)^3} \le 2.$$  
It follows that 
$$\poinc{\mu^{2|m}}\approx\left( \frac{\int_0^{+\infty} f}{f(0)}\right)^2=\left(\int_0^{+\infty} \exp\left(-V(m+t/\sqrt2)-V(m-t/\sqrt2)+2V(m)  \right) \, dt\right)^2.$$
The claim follows after an obvious change of variables.
 \end{proof}
 
 The previous characterization allows to distinguish two different types of behaviours:
 
 \begin{itemize}
 \item If $V(t)=|t|$, one gets that $\int_{0}^{+\infty} e^{-\big[V(m+t)+V(m-t)-2V(m)\big]}dt=|m|+1/2$ and there is no uniform spectral gap. The same happens if $V(t)=|t|^\beta$ for $\beta\in [1,2)$ as a study of the corresponding integrals for $m\to \infty$ shows.

\item  If any strict uniform convexity property of $V$ holds, of the form
$$\forall m, t \in \R, \ \ V(m+t)+V(m-t)-2V(m) \ge \omega(t) \quad \textrm{with}\quad \int_\R e^{-\omega}<\infty,$$
then we can guarantee a uniform spectral gap. This condition is more general than the usual strict uniform convexity $\inf V''>0$ and is verified e.g. by potential $V(t)=t^\beta$, $\beta>2$ (see below). 
\end{itemize}

 We conclude this section with an example of application. It is definitely
less encompassing than the recent Menz-Otto theorem, but covers some cases of non-convex potentials which were not treated by Caputo for
technical reasons.

\begin{corollary}
Let $C:\R^+\to \R$ be a convex non-decreasing function. Let $\psi:\R^+\to \R$ be twice continuously differentiable
with $\|\psi\|_\infty, \|\psi''\|_\infty<+\infty$. Let $V_\varepsilon(t)=C(t^2)+\varepsilon \psi(t)$ and $\dmu_\varepsilon(t)=e^{-V_\varepsilon(t)} dt/Z_\varepsilon$ the corresponding
probability measure. Then there exists $\varepsilon_0>0$ such that for all $\varepsilon \in [-\varepsilon_0,\varepsilon_0]$,
 $$\sup_{n,m} \poinc{\mu^{n|m}_\varepsilon}<+\infty.$$
\end{corollary}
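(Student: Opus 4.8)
The plan is to reduce the claim to the criterion established in the preceding Proposition, namely that $\sup_{n,m}\poinc{\mu^{n|m}_\varepsilon}<+\infty$ holds as soon as there is $\alpha\ge 0$ with $V_\varepsilon''\ge-\alpha$ pointwise and $\sup_m\poinc{(\mu_\varepsilon)^{2|m}}<1/\alpha$. So the whole proof amounts to two pointwise/one-dimensional checks: a lower bound on $V_\varepsilon''$ that is uniform in $\varepsilon$ small, and a uniform-in-$m$ upper bound on the Poincar\'e constant of the one-dimensional conditioned measure $(\mu_\varepsilon)^{2|m}$, together with verifying these two numbers are compatible (the second strictly below the reciprocal of the first).

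First I would record the Hessian bound. Writing $W(t)=C(t^2)$, one has $W''(t)=2C'(t^2)+4t^2C''(t^2)\ge 0$ since $C$ is convex and non-decreasing, so $W$ is convex; hence $V_\varepsilon''=W''+\varepsilon\psi''\ge -|\varepsilon|\,\|\psi''\|_\infty\ge -\varepsilon_0\|\psi''\|_\infty=:-\alpha$ for $|\varepsilon|\le\varepsilon_0$. Thus the first hypothesis of the Proposition is met with a constant $\alpha$ that we can make as small as we like by shrinking $\varepsilon_0$.

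Next I would estimate $\sup_m\poinc{(\mu_\varepsilon)^{2|m}}$ using the explicit formula from the proof of the convex case: up to numerical constants this Poincar\'e constant is comparable to $\big(\int_0^\infty e^{-[V_\varepsilon(m+t)+V_\varepsilon(m-t)-2V_\varepsilon(m)]}\,dt\big)^2$. The bounded perturbation $\varepsilon\psi$ changes the exponent by at most $4\varepsilon_0\|\psi\|_\infty$, so it only affects the integral by a bounded multiplicative factor $e^{\pm 4\varepsilon_0\|\psi\|_\infty}$; it therefore suffices to bound $\int_0^\infty e^{-[W(m+t)+W(m-t)-2W(m)]}\,dt$ uniformly in $m$, where $W(t)=C(t^2)$. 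Here I would use convexity of $W$ together with the structure $W(t)=C(t^2)$: for the ``bad'' regime one wants a lower bound of the Brascamp--Lieb/second-difference type. Concretely, $W(m+t)+W(m-t)-2W(m)=C((m+t)^2)+C((m-t)^2)-2C(m^2)\ge C\big(\tfrac{(m+t)^2+(m-t)^2}{2}\big)+\big(C(m^2)-C(m^2)\big)$ is not quite enough by itself; instead one uses that $(m+t)^2+(m-t)^2=2m^2+2t^2$ and convexity of $C$ to get $C((m+t)^2)+C((m-t)^2)\ge 2C(m^2+t^2)\ge 2C(t^2)$ when $C$ is non-decreasing, hence $W(m+t)+W(m-t)-2W(m)\ge 2C(t^2)-2C(m^2)$. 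That last bound still has the unpleasant $-2C(m^2)$ term, so I would instead argue more carefully: by convexity of $C$, for each fixed $t$ the function $m\mapsto C((m+t)^2)+C((m-t)^2)-2C(m^2)$ is minimized (in an appropriate sense) and one shows it is $\ge \omega(t)$ for some integrable $\omega$ independent of $m$ --- this is exactly the ``strict uniform convexity'' condition flagged in the itemized remark just before the corollary, which $V(t)=C(t^2)$ satisfies for reasonable $C$ (e.g. $C$ with $C(s)\to\infty$, since then $C(t^2)\gtrsim$ a super-quadratic-looking growth).

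The main obstacle, and the step deserving real care, is precisely this last point: showing $\sup_m\poinc{(\mu_\varepsilon)^{2|m}}$ is finite, i.e. that $m\mapsto\int_0^\infty e^{-[W(m+t)+W(m-t)-2W(m)]}\,dt$ is bounded. The delicate case is $m$ large, where the second difference of $W$ can degenerate if $C$ grows slowly; one must exploit that $W(t)=C(t^2)$ with $C$ convex non-decreasing to guarantee a genuine coercive lower bound $W(m+t)+W(m-t)-2W(m)\ge\omega(t)$ with $\int e^{-\omega}<\infty$ uniformly in $m$. Once that is in hand, combining with the $\varepsilon$-perturbation factor gives $\sup_m\poinc{(\mu_\varepsilon)^{2|m}}\le M$ for a finite $M$ independent of small $\varepsilon$, and then choosing $\varepsilon_0$ small enough that $\alpha=\varepsilon_0\|\psi''\|_\infty<1/M$ makes both hypotheses of the Proposition hold simultaneously, yielding $\sup_{n,m}\poinc{\mu^{n|m}_\varepsilon}\le 2(M^{-1}-\alpha)^{-1}<+\infty$, as desired.
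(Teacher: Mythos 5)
Your overall strategy matches the paper: reduce to Proposition~\ref{prop:2n}, get the pointwise lower bound $V_\varepsilon''\ge -\alpha$ with $\alpha=|\varepsilon|\|\psi''\|_\infty$, and then control $\sup_m\poinc{\mu_\varepsilon^{2|m}}$ by first handling $\varepsilon=0$ and then using a bounded-perturbation argument (your $e^{\pm 4\varepsilon_0\|\psi\|_\infty}$ factor is the same as the Holley--Stroock step in the paper). However, the central estimate --- that $\sup_m\int_0^\infty e^{-[W(m+t)+W(m-t)-2W(m)]}\,dt<\infty$ for $W(t)=C(t^2)$ --- is left as an acknowledged gap, and the direction you try before giving up is the wrong one.

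Concretely, after the correct Jensen step
$$C((m+t)^2)+C((m-t)^2)\;\ge\;2\,C\!\left(\tfrac{(m+t)^2+(m-t)^2}{2}\right)=2\,C(m^2+t^2),$$
you replace $2C(m^2+t^2)$ by $2C(t^2)$ using monotonicity, which indeed leaves the useless $-2C(m^2)$ term and goes nowhere. What you need instead is the \emph{monotonicity of increments} of the convex function $C$: for fixed $h\ge0$, the map $x\mapsto C(x+h)-C(x)$ is non-decreasing, so with $x_1=0\le x_2=m^2$ and $h=t^2$,
$$C(m^2+t^2)-C(m^2)\;\ge\;C(t^2)-C(0).$$
Combining the two inequalities yields
$$W(m+t)+W(m-t)-2W(m)\;\ge\;2\big(C(t^2)-C(0)\big)=2\big(V_0(t)-V_0(0)\big)\;\ge\;V_0(t)-V_0(0),$$
and therefore $\int_0^\infty e^{-[W(m+t)+W(m-t)-2W(m)]}\,dt\le e^{V_0(0)}\int e^{-V_0}=Z_0\,e^{V_0(0)}<\infty$, uniformly in $m$. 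This is precisely the ``convexity used twice'' remark in the paper, and it requires nothing beyond $C$ convex and non-decreasing --- no super-quadratic growth, no extra $\omega$ to hunt for. Once you insert this inequality, the rest of your argument (perturbation factor, then choosing $\varepsilon_0$ so that $\alpha=\varepsilon_0\|\psi''\|_\infty<1/M$) closes the proof exactly as in the paper. As a minor point, your verification that $W$ is convex by differentiating twice tacitly assumes $C$ is $C^2$; it is cleaner to note directly that $W=C\circ(t\mapsto t^2)$ is a convex non-decreasing function composed with a convex one, hence convex.
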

\begin{proof}
Let us start with the case $\varepsilon=0$. The single site potential is convex and we may apply the previous proposition.
Thanks to the convexity of $C$, used twice
\begin{eqnarray*}
V_0(m+t)+V_0(m-t)-2V_0(m) &=& C(m^2+t^2+2mt)+ C(m^2+t^2-2mt)-2C(m^2)\\
 &\ge & 2 \big(C(m^2+t^2)-C(m^2) \big) \ge  2 \big(C(t^2)-C(0)\big)\\
 &=&2(V_0(t)-V_0(0)) \ge V_0(t)-V_0(0).
\end{eqnarray*}
Hence 
$$\int_{\R^+} \exp(-[V_0(m+t)+V_0(m-t)-2V_0(m)])dt \le \int \exp(-V_0(t)+V_0(0))dt=Z_0e^{V_0(0)}<+\infty.$$
The case of $\varepsilon\neq 0$ is obtained by perturbation. First note that $V_\varepsilon''(t)\ge -|\varepsilon|  \,\|\psi''\|_\infty$.
Setting $\alpha:=|\varepsilon|  \,\|\psi''\|_\infty$, we deduce that the potential $\Phi$ of $\mu_\varepsilon^{n|m}$ satisfies
$D^2\Phi\ge -\alpha \mathrm{Id}$ pointwise. 

Thanks to Proposition~\ref{prop:2n}, proving that for all $m$, $\poinc{\mu_\varepsilon^{2|m}}\le 1/(2\alpha)$
would be enough to establish the uniform spectral gap inequalities for the measures $\mu^{n|m}_\varepsilon$, $n\ge 2, m\in \R$.
But this follows from the classical Holley-Stroock bounded perturbation principle (on $\R$): if $d\nu=e^g d\mu$ then $\poinc{\nu}\le e^{\sup g-\inf g} \poinc{\mu}$.
Indeed the potential of $\mu_\varepsilon^{2|m}$ differs from the one of $\mu_{0}^{2|m}$ only by the term $\varepsilon \psi(m+t/\sqrt 2)+ \varepsilon \psi(m-t/\sqrt 2)$
(and a constant term coming from normalization, which does not contribute to the oscillation of the perturbation).
Hence 
$$\poinc{\mu_\varepsilon^{2|m}}\le e^{4 |\varepsilon| \|\psi\|_\infty} \poinc{\mu_0^{2|m}} \le c (Z_0 e^{V_0(0)})^2 e^{4 |\varepsilon| \|\psi\|_\infty},$$
where we have used the estimate  established in the $\varepsilon=0$ case.
Consequently, if $|\varepsilon|$ verifies that
$$ c (Z_0 e^{V_0(0)})^2 e^{4 |\varepsilon| \|\psi\|_\infty} \le \frac{1}{2|\varepsilon|  \,\|\psi"\|_\infty},$$
then we have uniform Poincar\'e constant for the measures $\mu^{n|m}_\varepsilon$, $n\ge 2, m\in \R$. This is obviously true 
when $\varepsilon$ is close enough to zero.
\end{proof}

Finally, let us state a result for log-concave single-site potentials, which just uses  the symmetries of $\mu^{n|m}$.
It follows from what we already proved for log-concave measures with the symmetries of the simplex:
\begin{theorem}
Let $\mu$ be a log-concave measure on $\R$. Let $\ell:\R^n \to \R$ defined by $\ell(x)=x_i\sqrt{n/(n-1)}$, for any $i\le n$.
Then for all $n\ge 2$, $m\in \R$,
$$\var_{\mu^{n|m}}(\ell) \le \poinc{\mu^{n|m}} \le c (\log n)^2\,  \var_{\mu^{n|m}}(\ell),$$
where $c$ is a universal constant.
\end{theorem}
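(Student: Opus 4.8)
The plan is to get the left inequality directly from the definition of the Poincar\'e constant and the right one by running the machinery of Section~\ref{sec:spectralgap} for the symmetry group of the regular simplex. For the \emph{lower bound}, I would first check that $\ell$ has unit Dirichlet energy on $H^n_m$: the gradient of $\ell$ along the hyperplane is the orthogonal projection onto $H^n_0=\{y;\ \sum_j y_j=0\}$ of the ambient gradient $\sqrt{n/(n-1)}\,e_i$, and since $|P_{H^n_0}e_i|^2=|e_i-\tfrac1n(1,\dots,1)|^2=1-\tfrac1n$, one gets $|\nabla_{H^n_m}\ell|^2\equiv 1$. Feeding $f=\ell$ into~\eqref{defpoinc} for $\mu^{n|m}$ then gives $\var_{\mu^{n|m}}(\ell)\le\poinc{\mu^{n|m}}$.

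For the \emph{upper bound}, I would use that $\mu^{n|m}$ is log-concave on the Euclidean space $H^n_m$ and, as recalled before Lemma~\ref{lem:mu2}, invariant under the $\binom n2$ hyperplane reflections $S_{i,j}$, $1\le i<j\le n$, which generate the isometry group $S_n$ of the regular simplex with vertices $nme_1,\dots,nme_n$. Here $\bigcap_{i<j}\fix(S_{i,j})=\{0\}$ in $H^n_0$, the generating set is conjugacy-stable, and its Cayley graph has Poincar\'e constant $\poinc{\mathcal G}=\tfrac1{2n}$ by Diaconis--Shahshahani~\cite{DS}. Taking $E_{i,j}:=\R(e_i-e_j)$ and $c_{i,j}:=\tfrac2n$ yields the decomposition of the identity~\eqref{decompid} on $H^n_0$ (the coefficient being forced by a trace), and with $d_{i,j}\equiv\poinc{\mathcal G}$ one has $\kappa=\poinc{\mathcal G}/\min_{i,j}c_{i,j}=\tfrac14$. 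Since $\mu^{n|m}$ is log-concave we may take $\alpha=\rho=0$, so Setting~\ref{setting} and the hypotheses of Theorem~\ref{th:vargeneral} are met, and~\eqref{eq:var3} gives, for every locally Lipschitz $f$,
\[
\var_{\mu^{n|m}}(f)\le 2\int \sum_{i<j}\tfrac2n\,\poinc{\mu^{n|m}_{x,E_{i,j}}}\,\big|P_{E_{i,j}}\nabla f(x)\big|^2\,d\mu^{n|m}(x).
\]

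From there I would reproduce the chain of \S\ref{sec:spectralgap}. By Lemma~\ref{lem:mu2} each $\mu^{n|m}_{x,E_{i,j}}$ is an even log-concave measure on the line $x+E_{i,j}$, centred at $P_{\fix(S_{i,j})}x$, so~\eqref{KLS} gives $\poinc{\mu^{n|m}_{x,E_{i,j}}}\le 4\,\mathbb E[\,|P_{E_{i,j}}X|^2\mid P_{\fix(S_{i,j})}X\,]$ for $X\sim\mu^{n|m}$. Bounding the weighted sum by a single maximum via the decomposition of the identity, then applying E.~Milman's averaging~\eqref{eq:max} and the $\psi_1$ estimate with $m'=\binom n2\le n^2$ distinct subspaces, one reaches $\poinc{\mu^{n|m}}\le c\,(\log n)^2\,\max_{i<j}\mathbb E|P_{E_{i,j}}X|^2$. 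Finally I would identify this maximum: by permutation symmetry it equals $\mathbb E|P_{E_{1,2}}X|^2=\tfrac12\mathbb E(X_1-X_2)^2$, and since the linear constraint $\sum_iX_i=nm$ forces $\mathrm{Cov}(X_1,X_2)=-\var_{\mu^{n|m}}(X_1)/(n-1)$, one computes $\mathbb E(X_1-X_2)^2=\tfrac{2n}{n-1}\var_{\mu^{n|m}}(X_1)=2\,\var_{\mu^{n|m}}(\ell)$, completing the proof.

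The only genuinely delicate point is that $\mu^{n|m}$ is in general not isotropic, so one cannot quote~\eqref{eq:poincsym} as stated; instead its proof must be rerun keeping the quantities $\mathbb E|P_{E_{i,j}}X|^2$ explicit. This is harmless since both the $\psi_1$ concentration property and E.~Milman's result are valid for arbitrary log-concave probability measures, and the remaining step---turning $\mathbb E(X_1-X_2)^2$ into $\var_{\mu^{n|m}}(\ell)$---is the elementary covariance identity forced by the affine constraint.
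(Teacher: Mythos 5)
Your proof is correct and follows the same overall strategy as the paper: both directions rest on the simplex symmetries of $\mu^{n|m}$, the Diaconis--Shahshahani spectral gap, E.~Milman's averaging, and the $\psi_1$ estimate, and your identification $\max_{i<j}\mathbb E|P_{E_{i,j}}X|^2=\var_{\mu^{n|m}}(\ell)$ is exactly the covariance computation that underlies the paper's argument. The one place you diverge is bookkeeping: the paper observes that, since the symmetry group acts irreducibly on $H^n_0$, the covariance of $\mu^{n|m}$ is automatically a multiple of the identity, so $\mu^{n|m}$ is a dilate (by $\sqrt{\var_{\mu^{n|m}}(\ell)}$) of an \emph{isotropic} distribution and one can quote the corollary for isotropic log-concave measures with simplex symmetries directly, with the factor $\var(\ell)$ appearing by scaling homogeneity of the Poincar\'e constant. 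You instead carry the non-isotropic quantities $\mathbb E|P_{E_{i,j}}X|^2$ through the Section~\ref{sec:spectralgap} machinery and evaluate them at the end. Both are valid; the paper's scaling reduction is a shortcut that hides the same arithmetic you make explicit, and in fact the displayed inequality preceding~\eqref{eq:poincsym} in the paper is already stated for general (non-isotropic) log-concave measures, so your caveat that one ``cannot quote \eqref{eq:poincsym}'' is slightly overstated -- the general form is already there, and you use it correctly.
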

\begin{proof}
Since the measure $\mu^{n|m}$ is invariant by an irreducible groups of isometries, it is automatically a dilate (by $\sqrt{\var_{\mu^{n|m}}(\ell)}$) of an isotropic distribution.
Note that $\ell$ is a linear function with unit length gradient (in  $H_m^n$ Euclidean structure).
Se the left-hand side inequality just follows by applying the Poincar\'e inequality to $\ell$. The right-hand side
inequality is a particular case of what we have proved for isotropic log-concave measures having the symmetries of the simplex.
\end{proof}
The previous result applies even when there are no uniform bounds. The  structure of the measure may be used
to estimate precisely the variance of linear functions (say of $\ell$).

%%%%%%%%%%%%%%%%%%%%%%%%%%%%%%%%%%%%%%%%%%%%%%%%%%%%%%%%%%%
%%%%%%%%%%%%%%%%%%%%%%%%%%%%%%%%%%%%%%%%%%%%%%%%%%%%%%%%%%%

\section{Isotropy constant of bodies with invariances}
\label{sec:isotropy}

This section investigates bounds on the isotropic constant of convex bodies (having, as before, many invariances). This  problem is central in the asymptotic theory of convex bodies, and is closely related to the questions discussed in previous sections, although the methods we will use here are rather different. 
We will work with convex bodies rather than with measures, mainly for convenience and for historical reasons. 

Recall that the isotropy constant of a convex body $K\subset \R^n$ is the positive number defined by
$$ L_K^2=\inf_{T\in A(\R^n)} \frac{1}{|TK|^{1+\frac2n}} \int_{TK} \frac{|x|^2}{n} \, dx,$$
where $A(K)$ denotes the affine group. The infimum is achieved for a map $T_0$ if and only if 
the barycenter of $T_0K$ is at the origin, and there exists a constant $M$ such that for all $\theta\in \R^n$,
$$\frac{1}{|T_0K|^{1+\frac2n}} \int_{T_0K} \langle x,\theta\rangle^2 \, dx=M |\theta|^2.$$
One then says that $T_0K$ is in \emph{isotropic position}.
Note that necessarily, $M=L^2_K$ and that $K\to \int_K |x|^2dx/|K|^{1+2/n}$ being invariant by dilations,
one may find a minimizer also satisfying $|T_0K|=1$. In the sequel, we shall also assume that our convex bodies have barycenter at the origin.

A major open problem is whether the numbers $L_K$ are uniformly bounded, independently of the dimension (a classical reference is \cite{MP}). This question is in fact related to the variance conjecture, as established in \cite{EK}.
It will be convenient to define $L(d)$ as the supremum of the isotropy constant
of  convex bodies in $\R^d$. The best known upper bound is due to Klartag \cite{K}: $L(d)\le c d^{1/4}$.

Given a subspace $E\subset \R^n$, the measures $|F\cap E|$ and $|P_E(K)|$ refer to the Lebesgue measure $|\cdot|$ in Euclidean space $E$; if $d=\textrm{dim}(E)$, we shall sometimes use also the notation $|\cdot|_d$, for clarity. By convention we have  $|A|_d=1$ if $d=0$ (i.e. $E=\{0\}$) and $0\in A$.

The isotropy constant is related to the size of sections of bodies in isotropic position. This principle
goes back to Hensley \cite{H}. The next statement appears in the lecture notes by Giannopoulos~\cite[pp 60-61]{G};
it is a non-symmetric version of a result of  Ball~\cite{Ball} (see also \cite{F} for sharp constants in the
hyperplane case).

\begin{theorem}\label{th:sectionB}
   Let $n>m$ and let $C\subset \R^n$ be a convex body in isotropic position.
  Let $E$ be a subspace 
of $\R^n$ with codimension $m$. Then
   $$ |C\cap E|^{\frac1m} L_C
 \le \kappa \, L(m) \, |C|^{\frac1m-\frac1n},$$
 where $\kappa$ is a universal constant.
\end{theorem}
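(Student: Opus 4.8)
Since $L_C$ is affine invariant and both sides of the claimed inequality get multiplied by $\lambda^{(n-m)/m}$ when $C$ is replaced by $\lambda C$, I would first normalise $|C|=1$, so that it suffices to prove $|C\cap E|^{1/m}L_C\le\kappa\,L(m)$. Put $F:=E^{\perp}$, so $\dim F=m$ and $d:=\dim E=n-m\ge 1$, and consider the marginal density $f:F\to\R^+$ given by $f(y):=|C\cap(y+E)|_d$. The plan rests on three elementary facts about $f$: (i) by Brunn's theorem, $f^{1/d}$ is concave on its support, so $f$ is admissible for K.~Ball's body construction; (ii) $\int_F f=|C|=1$ and $f(0)=|C\cap E|>0$ (as $0\in\mathrm{int}(C)$); and (iii) since $C$ is isotropic and every $\theta\in F$ is orthogonal to the slicing subspace $E$, Fubini gives $\int_F\langle y,\theta\rangle^{2}f(y)\,dy=\int_C\langle x,\theta\rangle^{2}\,dx=L_C^{2}|\theta|^{2}$ for all $\theta\in F$. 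Because $C$ has barycenter at the origin, so does the measure $f\,dy$.

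\textbf{Passing to a convex body.} Next I would introduce Ball's body $K:=K_{m+2}(f)\subset F\cong\R^m$, i.e. the star body with radial function $\rho_K(u)^{m+2}=\frac{m+2}{f(0)}\int_0^{\infty}f(ru)\,r^{m+1}\,dr$; the $1/d$-concavity of $f$ makes $K$ convex. A computation in polar coordinates yields the exact identity
\[
\int_K\langle x,\theta\rangle^{2}\,dx=\frac{1}{f(0)}\int_F\langle y,\theta\rangle^{2}f(y)\,dy=\frac{L_C^{2}}{|C\cap E|}\,|\theta|^{2}\qquad(\theta\in F),
\]
so the second moment matrix of $K$ \emph{about the origin} equals $\gamma\,\mathrm{Id}$ with $\gamma:=L_C^{2}/|C\cap E|$, whence $\int_K|x|^{2}\,dx=m\gamma$. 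Ball's comparison lemma between $K_{m+2}(f)$ and $K_{m}(f)$ — the latter having volume $\int_F f/f(0)=1/|C\cap E|$ — then gives $|K|\asymp 1/|C\cap E|$ with absolute constants. Writing $b$ for the barycenter of $K$, we have $\mathrm{Cov}(K)=\frac{\gamma}{|K|}\mathrm{Id}-b\otimes b$, hence $\det\mathrm{Cov}(K)=(\gamma/|K|)^{m-1}(\gamma/|K|-|b|^{2})$; using the position-free identity $L_K^{2}=(\det\mathrm{Cov}(K))^{1/m}/|K|^{2/m}$ one gets, \emph{provided} $|b|^{2}\le(1-\varepsilon_0)\gamma/|K|$ for some absolute $\varepsilon_0>0$,
\[
L_K^{2}\ \gtrsim\ \frac{\gamma}{|K|^{1+2/m}}\ \asymp\ L_C^{2}\,|C\cap E|^{2/m}.
\]
Since $L_K\le L(m)$ by the very definition of $L(m)$, taking $m$-th roots gives $|C\cap E|^{1/m}L_C\lesssim L(m)$; undoing the normalisation $|C|=1$ restores the factor $|C|^{1/m-1/n}$ and yields the stated bound.

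\textbf{Main obstacle.} The only non-routine step is the barycentric estimate $|b|^{2}\le(1-\varepsilon_0)\gamma/|K|$. Only the weak bound $|b|^{2}\le\gamma/|K|$ is automatic — it just records $\mathrm{Cov}(K)\succeq 0$ — and a \emph{uniform} gap genuinely fails for an arbitrary body whose second moment about the origin is a multiple of the identity; this is precisely where the \emph{non-symmetric} refinement is needed. I would prove it following Ball's original argument (see the account in Giannopoulos's notes): the concavity of $f^{1/d}$ together with $\int_F y\,f(y)\,dy=0$ forces $K=K_{m+2}(f)$ to be suitably balanced around $0$, and one estimates $\langle b,\theta\rangle$ directly against $\bigl(\int_F\langle y,\theta\rangle^{2}f\bigr)^{1/2}$. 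In the centrally symmetric case $b=0$ and the point is vacuous, recovering Ball's theorem.
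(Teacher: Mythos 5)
Your strategy --- form Ball's body $K=K_{m+2}(f)$ from the marginal $f(y)=|C\cap(y+E)|_d$ on $F=E^\perp$ and transfer moment information --- is exactly the route taken in the references the paper cites for this statement, and the moment identity for $K$, the volume comparison $|K|\asymp 1/|C\cap E|$, and the position-free formula $L_K^2=\det\mathrm{Cov}(K)^{1/m}/|K|^{2/m}$ are all used correctly. But the step you single out as the ``only non-routine'' one, namely $|b|^2\le(1-\varepsilon_0)\,\gamma/|K|$ for an absolute $\varepsilon_0>0$, is precisely the new content of the non-symmetric version, and you do not prove it: you defer to ``Ball's original argument'', which is the symmetric case where $b=0$ and there is nothing to show.

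Two concrete remarks about that missing step. First, as you state it the estimate is not true for general bodies satisfying only $\int_K x\otimes x\,dx=\gamma\,\mathrm{Id}$: for an $m$-dimensional cone with apex at the origin, put in this position, one computes $|b|^2/(\gamma/|K|)=\frac{m(m+2)}{(m+1)^2}=1-(m+1)^{-2}$, which tends to $1$ as $m\to\infty$; so any workable $\varepsilon_0$ must come out of the specific structure of $K_{m+2}(f)$ with $f$ a \emph{centered} $\frac{1}{d}$-concave density, and your one-line sketch does not explain how. Second, you are also asking for more than you need: since only $(\gamma/|K|-|b|^2)^{1/m}$ enters $L_K$, an estimate of the form $\gamma/|K|-|b|^2\ge c^m\,\gamma/|K|$ (or even one decaying polynomially in $m$) already gives the theorem with a universal $\kappa$. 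This weaker bound is what one should try to extract from the centering of $f$ --- e.g.\ $K_{m+1}(f)$ has barycenter exactly at the origin when $\int y f\,dy=0$, and Ball's comparison lemma nests $K_{m+1}(f)$ and $K_{m+2}(f)$ within dilation factors of the order $1+O(1/m)$, which controls the drift of $b$; alternatively one can work with $\|f\|_\infty$ instead of $f(0)$, using Fradelizi's inequality $f(0)\ge e^{-m}\|f\|_\infty$ for a log-concave density with barycenter at the origin. Either way, this barycentric control has to actually be carried out; as it stands, the proposal faithfully reproduces Ball's symmetric skeleton but leaves unproved the part of the argument that is genuinely about the non-symmetric case.
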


Let us emphasize a useful property of subspaces obtained as fixed-point spaces of an isometry of $K$.
\begin{lemma}\label{lem:fix}
Let $K\subset \R^n$ be a convex body and  $U\in \mathcal O_n(K)$.Then, for  $F:=\mathrm{Fix}(U)$ we have
$$ P_{F}K=K\cap F.$$
\end{lemma}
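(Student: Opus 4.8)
The plan is to prove both inclusions, the nontrivial one being $P_F K\subseteq K\cap F$, since $P_F K\subseteq F$ is automatic and, for the reverse, any $y\in K\cap F$ satisfies $P_F y=y$ because $y\in F$, giving $K\cap F\subseteq P_F K$.

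For the inclusion $P_F K\subseteq K\cap F$: first I would observe that $P_F K\subseteq F$ trivially, so it remains to show $P_F K\subseteq K$. Take $x\in K$; I want $P_F x\in K$. The key is to average $x$ over the orbit of the group generated by $U$ in a way that lands on $P_F x$. If $U$ has finite order $k$, then by convexity of $K$ and $U$-invariance ($UK=K$, so $U^j x\in K$ for all $j$), the point $\frac1k\sum_{j=0}^{k-1} U^j x$ lies in $K$; and this average equals the orthogonal projection of $x$ onto $\mathrm{Fix}(U)=\ker(U-\mathrm{Id})$, because $\frac1k\sum_{j=0}^{k-1}U^j$ is exactly the orthogonal projection onto $\mathrm{Fix}(U)$ for an orthogonal transformation of finite order (it is a self-adjoint idempotent with the right range and kernel). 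Hence $P_F x\in K$, as desired.

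If one does not want to assume $U$ has finite order, the same conclusion follows by a limiting argument: the closure $\overline{\langle U\rangle}$ in $\mathcal O_n$ is a compact abelian group; integrating $U^t x$ against its normalized Haar measure gives a point of $K$ (by convexity and compactness of $K$, the barycenter of a probability measure supported on $K$ lies in $K$), and this barycenter is the orthogonal projection of $x$ onto the fixed-point space of the whole group $\overline{\langle U\rangle}$, which coincides with $\mathrm{Fix}(U)=F$. Either way we get $P_F x\in K$.

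The main obstacle, such as it is, is purely the identification of the averaging operator $\frac1k\sum_{j=0}^{k-1}U^j$ (or the Haar integral $\int U^t\,dt$) with the orthogonal projection $P_F$ onto $F=\mathrm{Fix}(U)$; this is a standard fact about orthogonal representations of compact groups (the average of the representation is the projection onto the invariants, and for a group of isometries this projection is orthogonal because $F^\perp$ is invariant), but it is the one point that deserves to be spelled out. Everything else — the two set inclusions and the use of convexity of $K$ together with $UK=K$ — is immediate. Note that $F$ may be $\{0\}$, in which case the statement reads $\{0\}=K\cap\{0\}$ since $0\in K$ (the barycenter is at the origin), consistent with the conventions fixed just before Theorem~\ref{th:sectionB}.
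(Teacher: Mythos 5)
Your proof is correct and uses essentially the same idea as the paper: average the orbit of $x$ under the powers of $U$, using convexity of $K$ together with $UK=K$, to land on $P_F x\in K$, the easy inclusion $K\cap F\subseteq P_F K$ being immediate. The only cosmetic difference is that the paper passes directly to the Ces\`aro limit $\lim_{k\to\infty}\frac{1}{k+1}\sum_{j=0}^{k}U^j=P_F$ (which handles finite and infinite order uniformly, by diagonalizing $U$ over $\mathbb{C}$), whereas you treat the finite-order case by an exact average and the general case by integrating against Haar measure on $\overline{\langle U\rangle}$.
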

\begin{proof}
For $k\ge 1$, let $U_k:=(\mathrm{Id}+U+\cdots+U^{k})/(k+1).$ We use that $\lim_{k\to \infty} U_k=P_F$
(to see this, simply diagonalise $U$ over $\mathbb C$).
%: the eigenspace for 1 is   $F$, and all the other eigenvalues are in $\{z\in \mathbb C\setminus\{1\};\; |z|=1\}$). 
The convexity of $K$ ensures that $U_kx\in K$ for every $x\in K$,
and taking limits gives $P_Fx\in K$. Hence $P_F K\subset K\cap F$.  
\end{proof}

  As noted by several authors in the eighties, unconditional convex bodies have a bounded isotropy constant. The next statement
gives a similar result for more general symmetries. Surprisingly, the symmetries may leave unchanged
a large subspace.
\begin{theorem}\label{th:LK}
Let $K$ be an  origin-symmetric convex body in $\R^n$.
Assume that  there exists isometries $U_1,\ldots,U_m\in \mathcal O_n(K)$ and   coefficients
$c_1,\ldots,c_m>0$ such that, setting $E_i=\fix(U_i)^\perp$,
\begin{equation*}%\label{eq:decomp}
\sum_{i=1}^m c_i P_{E_i} = P_E,
\end{equation*}
for some subspace $E\in \R^n$. 
If the codimension $d$ of $E$ verifies $d\le \alpha n/\log n$, then
$$ L_C\le C(\alpha) \max_i L\big({\rm dim}(E_i)\big).$$
%where for all $i$, $d_i$ is the codimension of $\mathrm{Fix}(U_i)$.
\end{theorem}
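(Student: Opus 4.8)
The plan is to put $K$ in isotropic position, feed the subspaces $F_i:=\fix(U_i)=E_i^\perp$ into Theorem~\ref{th:sectionB}, convert the central sections that appear there into \emph{projections} via Lemma~\ref{lem:fix}, and then glue the $m$ resulting inequalities together by a Loomis--Whitney (geometric Brascamp--Lieb) inequality built from the decomposition of the identity $\sum_i c_iP_{E_i}=P_E$. A single Rogers--Shephard estimate then closes the argument, and this is exactly the step where the codimension bound $d\le\alpha n/\log n$ gets spent.

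For the position reduction, observe that since each $U_i$ is orthogonal and leaves $K$ invariant, the inertia matrix $M_K=\int_K x\otimes x\,dx$ satisfies $U_iM_KU_i^{-1}=M_K$; hence $T:=M_K^{-1/2}$ (a continuous function of $M_K$) commutes with every $U_i$ as well, so $TK$ is simultaneously isotropic and still invariant under all the $U_i$. Replacing $K$ by a suitable dilate of $TK$ — which changes neither $L_K$, nor the spaces $E_i$, nor the relation $\sum_ic_iP_{E_i}=P_E$ — we may assume $K$ is in isotropic position with $|K|=1$. Write $d_i:=\dim E_i=\operatorname{codim}F_i$, $C:=\sum_ic_i$ and $\Lambda:=\max_iL(d_i)$. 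Applying Theorem~\ref{th:sectionB} to $K$ and the subspace $F_i$ gives $|K\cap F_i|^{1/d_i}L_K\le\kappa L(d_i)\le\kappa\Lambda$, and Lemma~\ref{lem:fix} (applied to the isometry $U_i$, whose fixed space is $F_i$) gives $|K\cap F_i|=|P_{F_i}K|$; thus $|P_{F_i}K|\le(\kappa\Lambda/L_K)^{d_i}$ for every $i$. On the other hand $P_{F_i}=\mathrm{Id}-P_{E_i}$, so the hypothesis $\sum_ic_iP_{E_i}=P_E$ yields $\sum_i\tfrac{c_i}{C}P_{F_i}+\tfrac1C P_E=\mathrm{Id}$, a geometric Brascamp--Lieb datum; the associated Loomis--Whitney inequality applied to $K$ gives $1=|K|\le\prod_i|P_{F_i}K|^{c_i/C}\,|P_EK|^{1/C}$, i.e. $\prod_i|P_{F_i}K|^{c_i}\ge|P_EK|^{-1}$. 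Combining the two bounds and using $\sum_ic_id_i=\operatorname{tr}P_E=n-d$ we get $|P_EK|^{-1}\le(\kappa\Lambda/L_K)^{n-d}$, that is,
\[
L_K\le\kappa\,\Lambda\,|P_EK|^{1/(n-d)}.
\]

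It remains to bound $|P_EK|^{1/(n-d)}$. By the Rogers--Shephard section--projection inequality, $|P_EK|\cdot|K\cap E^\perp|\le\binom nd|K|=\binom nd$. For the section, use that an isotropic symmetric convex body of volume $1$ has inradius at least a universal constant $c_0>0$: if $K$ were contained in a slab $\{|x\cdot\theta|\le\rho\}$ about the origin, then $c_0^2\le L_K^2=\int_K(x\cdot\theta)^2\,dx\le\rho^2$, by the classical lower bound on isotropy constants. Hence $K\supseteq c_0B_2^n$, so $K\cap E^\perp\supseteq c_0B_2^d$ and $|K\cap E^\perp|\ge c_0^{\,d}|B_2^d|$, giving
\[
|P_EK|^{1/(n-d)}\le\binom nd^{1/(n-d)}\bigl(c_0^{\,d}\,|B_2^d|\bigr)^{-1/(n-d)} .
\]
Now $\binom nd^{1/(n-d)}\le(en/d)^{d/(n-d)}$, and since $|B_2^d|^{-1/d}\le c_1\sqrt d$ one has $\bigl(c_0^{\,d}|B_2^d|\bigr)^{-1/(n-d)}\le(c_1c_0^{-1}\sqrt d)^{d/(n-d)}$. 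In both right-hand sides the exponent satisfies $d/(n-d)\le 2\alpha/\log n$ for $n$ large (from $d\le\alpha n/\log n$), while the logarithm of the base is $O(\log n)$, so both factors are at most some $C(\alpha)$. Plugging this into the previous display gives $L_K\le C(\alpha)\,\Lambda=C(\alpha)\max_iL(\dim E_i)$; the finitely many dimensions $n\le n_0(\alpha)$ are absorbed into $C(\alpha)$ via the trivial bound $L_K\le L(n)$.

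I expect the Loomis--Whitney gluing together with the Rogers--Shephard estimate to be the crux. The gluing is what converts an arbitrary $m$-term decomposition of the identity into something usable, and its only defect is the extra factor $|P_EK|^{1/(n-d)}$, which measures the ``deficit'' of the family $\{F_i\}$ over the common fixed subspace $E^\perp$; the whole weight of the hypothesis $d\le\alpha n/\log n$ is spent on controlling that factor. (When $E=\R^n$, i.e. $d=0$, the deficit is absent and the first three steps already give $L_K\le\kappa\max_iL(\dim E_i)$ with no loss — the case to keep in mind.) A secondary point to verify carefully is that $E_i\subseteq E$ and $E^\perp=\bigcap_i\fix(U_i)$, both of which follow from $\sum_ic_iP_{E_i}=P_E$ and are what make the two geometric reductions legitimate — the identity $|K\cap F_i|=|P_{F_i}K|$ from Lemma~\ref{lem:fix}, and the inclusion $K\cap E^\perp\supseteq c_0B_2^d$.
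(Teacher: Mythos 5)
Your proof is correct and follows essentially the same route as the paper's: isotropy reduction via commutation of the inertia matrix with the $U_i$, the geometric Brascamp--Lieb (Loomis--Whitney) inequality for the decomposition $\sum_i (c_i/C)P_{F_i}+(1/C)P_E=\mathrm{Id}$, conversion of sections to projections via Lemma~\ref{lem:fix} plus Theorem~\ref{th:sectionB}, and Rogers--Shephard with a lower bound on $|K\cap E^\perp|$ to control the deficit term $|P_EK|$, which is where $d\le\alpha n/\log n$ is spent. The only cosmetic differences are that you normalize $|K|=1$ upfront (so the $|K|$ exponents never need to be traced out) and you derive the inradius bound $K\supseteq c_0B_2^n$ directly from the universal lower bound on $L_K$ rather than citing the Kannan--Lov\'asz--Simonovits / Milman--Pajor inclusion $|K|^{1/n}L_KB_2^n\subset K$ — a slightly more self-contained variant of the same step.
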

\begin{remark}
%Equation~\eqref{eq:decomp} implies that for all $x\in\R^n$, $|P_Ex|^2=\sum c_i |P_{\mathrm{Fix}(U_i)^\bot}x|^2$.
%From this, one deduces 
Recall that $\cap_i \mathrm{Fix}(U_i)=E^\bot$. Hence, the condition on $d$ means 
that $\mathrm{dim}(\cap_i \mathrm{Fix}(U_i))\le  \alpha n/\log n$. In other words, the group of isometries of $K$  may
not act on a subspace of dimension $n/\log(n)$. 
\end{remark}

\begin{proof}
First let us note that we may assume that $K$ is isotropic. Indeed let $A$ be the positive matrix such 
that for all $\theta\in\R^n$, $\int_K (x\cdot\theta)^2 dx= A\theta\cdot\theta$.
Then it is plain that $A^{-1/2}K$ is isotropic. For any isometry $U$ preserving $K$,
$$ A\theta\cdot\theta=  \int_K ( x\cdot\theta)^2 dx=\int_{UK} (x\cdot\theta)^2 dx
= AU^{-1}\theta\cdot U^{-1}\theta.$$
Hence $UAU^*=A$, that is $AU=UA$. Consequently $UA^{-1/2}K=A^{-1/2}UK=A^{-1/2}K$. So $A^{-1/2}K$ is isotropic
and has the same isometric invariances.

It is convenient to set $F_0=E$ and for $i\ge 1$, $F_i=\mathrm{Fix(U_i)}$. Since $P_{E_i}=\mathrm{Id}-P_{F_i}$ for $i=1,\ldots, m$,
the decomposition of the Theorem gives
\begin{equation}\label{eq:decomp2}
 \sum_{i=0}^m c'_i P_{F_i}=\mathrm{Id},
\end{equation}
where $c'_0=(\sum_{j=1}^m c_j)^{-1}$ and for $i\ge 1$, $c'_i=c_j/(\sum_{j=1}^m c_j)$. This decomposition of
the identity allows us to apply the geometric version of the Brascamp-Lieb inequality (see e.g. \cite{B}):
since $K\subset \bigcap_{i=0}^m \{x\in \R^n;\; P_{F_i}x\in P_{F_i}K\}$, 
\begin{eqnarray*}
|K| &\le & \int_{\R^n} \prod_{i=0}^m \mathbf 1_{P_{F_i}K}(P_{F_i}x)^{c_i'} dx \le   \prod_{i=0}^m \left(\int_{F_i} \mathbf 1_{P_{F_i}K}(x_i) \, dx_i\right)^{c'_i}\\
  &=& |P_E K|^{c'_0} \prod_{i=1}^m |P_{F_i}K|^{c'_i}.
\end{eqnarray*}
Set $d_i=\mathrm{dim}(E_i)=n-\mathrm{dim}(F_i)$.
For $i\ge 1$, since $F_i=\mathrm{Fix(U_i)}$ and $U_i$ leaves $K$ invariant, we know by Lemma~\ref{lem:fix} and
Theorem~\ref{th:sectionB} that
$$  |P_{F_i}K|=|K\cap F_i|_{n-d_i}\le |K|^{\frac{n-d_i}{n}} \left(\frac{\kappa L(d_i)}{L_K}\right)^{d_i}.$$
For the projection onto $E$, we first use the Rogers-Shephard inequality
$$ |P_E K|_{n-d} \le  {n \choose d}  \frac{|K|}{|K\cap E^\bot|_{d}}.$$
Next, by a result of  Kannan-Lovasz-Simonovits \cite{KLS},  $ |K|^{1/n}L_K B_2^n \subset K$ (actually for symmetric convex sets this can be found in a stronger
form in the article by Milman and Pajor~\cite{MP}). Taking sections yields
 $|K\cap E^\bot|_d \ge |K|^{d/n} L_K^d |B_2^d|_d$, hence
  $$ |P_EK|_{n-d} \le  {n \choose d}  \frac{|K|^{\frac{n-d}{n}}}{L_K^d |B_2^d|_d }.$$
 Combining these bounds yields
 $$ |K| \le \left({n \choose d}  \frac{|K|^{\frac{n-d}{n}}}{L_K^d |B_2^d|_d }\right)^{c'_0}\prod_{i=1}^m
   \left( |K|^{\frac{n-d_i}{n}} \left(\frac{\kappa L(d_i)}{L_K}\right)^{d_i}\right)^{c'_i}.$$ 
Taking traces in \eqref{eq:decomp2} gives $n=c'_0(n-d)+\sum_{i\ge 1} c'_i(n-d_i)$ so that the terms in $|K|$
cancel out. The latter equality can be also stated as $n-d=\sum_{i\ge 1} c_id_i$. Using also that  $c'=(c'_0,c'_1,\ldots,c'_m)$ is proportional to $c=(1,c_1,\ldots,c_m)$
%, and that taking traces in \eqref{eq:decomp} yields . 
and  rearranging the terms gives 
\begin{eqnarray*}
 L_K &\le&   \left( \frac{{n \choose d}}{|B_2^d|_d}\right)^{\frac{c'_0}{c'_0d+\sum_{j\ge 1} c'_jd_j}}\prod_{i=1}^m
    \big(\kappa L(d_i) \big)^{\frac{c'_id_i}{c'_0d+\sum_{j\ge 1} c'_jd_j}} \\
    &=&  \left( \frac{{n \choose d}^{\frac1d}}{|B_2^d|_d^{\frac1d}}\right)^{\frac{ d}{ n}}\prod_{i=1}^m
    \big(\kappa L(d_i) \big)^{\frac{c_id_i}{ n}} \\
    &\le & (\beta n\sqrt{d})^{\frac{d}{n}} \big(\kappa\max_i L(d_i)\big)^{1-\frac{d}{n}}\\
    &\le & \beta' e^{\frac{3d}{2n} \log n} \max_i L(d_i),
\end{eqnarray*}
where $\beta,\beta'>0$ are universal constants. We have also used that  $\inf_k L(k)>0$.
\end{proof}

\begin{corollary}
   Let $K\subset \R$ be a convex body with barycenter at the origin. Assume that there exists (non necessarily orthogonal)  
  symmetries $S_1,\ldots,S_m$ with respect hyperplanes $H_1,\ldots,H_m$ 
such that for all $i\le m$, $S_{H_i}K =K$ 
and $\mathrm{dim}(\bigcap_{i\le m} H_i)\le \alpha \frac{n}{\log n}$. Then $L_K\le C(\alpha).$
\end{corollary}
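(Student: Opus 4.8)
The plan is to reduce the statement to Theorem~\ref{th:LK}. The only difficulty is that the reflections $S_{H_i}$ are a priori not orthogonal, while Theorem~\ref{th:LK} requires isometries of $K$; the remedy is to bring $K$ into isotropic position, which turns every affine symmetry with respect to a hyperplane into an orthogonal one. First, since $K$ has its barycenter at the origin and $S_{H_i}$ is an affine bijection with $S_{H_i}K=K$, it must fix the barycenter, so $S_{H_i}(0)=0$; hence $0\in H_i$, each $H_i$ is a linear hyperplane, and $S_{H_i}$ is a linear involution with $\det S_{H_i}=-1$ and $\fix(S_{H_i})=H_i$. Let $A$ be the (positive definite) covariance form of $K$, so that $K':=A^{-1/2}K$ is isotropic, still has barycenter at the origin, and $L_{K'}=L_K$. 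As in the proof of Theorem~\ref{th:LK}, the invariance $S_{H_i}K=K$ together with $|\det S_{H_i}|=1$ gives $A=S_{H_i}A\,S_{H_i}^{\ast}$, and hence $S_{H_i}^{\ast}A^{-1}S_{H_i}=A^{-1}$ since $S_{H_i}^{-1}=S_{H_i}$. Setting $\widetilde S_i:=A^{-1/2}S_{H_i}A^{1/2}$, one then checks
\[
\widetilde S_i^{\ast}\widetilde S_i=A^{1/2}\big(S_{H_i}^{\ast}A^{-1}S_{H_i}\big)A^{1/2}=A^{1/2}A^{-1}A^{1/2}=\mathrm{Id},
\]
so $\widetilde S_i\in\O_n$; moreover $\widetilde S_i^{2}=\mathrm{Id}$, $\det\widetilde S_i=-1$, and $\fix(\widetilde S_i)=A^{-1/2}H_i$ is a hyperplane, so $\widetilde S_i$ is an orthogonal hyperplane reflection. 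Finally $\widetilde S_iK'=A^{-1/2}S_{H_i}K=K'$, and $\bigcap_i\fix(\widetilde S_i)=A^{-1/2}\big(\bigcap_iH_i\big)$ has the same dimension as $\bigcap_iH_i$, hence at most $\alpha n/\log n$.

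Next I would invoke the reflection-group material of the Appendix: applied to the reflections $\widetilde S_1,\dots,\widetilde S_m\in\O_n(K')$, Lemma~\ref{lem:dec-reflex} produces hyperplane reflections $U_1,\dots,U_{m'}\in\O_n(K')$ and positive reals $c_1,\dots,c_{m'}$ such that, writing $E_j:=\fix(U_j)^{\perp}$ (a line) and $E:=\big(\bigcap_j\fix(U_j)\big)^{\perp}$, one has $\sum_j c_jP_{E_j}=P_E$ and $\mathrm{codim}(E)=\dim\big(\bigcap_j\fix(U_j)\big)=\dim\big(\bigcap_iH_i\big)\le\alpha n/\log n$. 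This is precisely the hypothesis of Theorem~\ref{th:LK} for the isotropic body $K'$ (its proof uses the symmetry assumption only through the Kannan--Lovász--Simonovits inclusion $c\,L_{K'}|K'|^{1/n}B_2^n\subset K'$, which holds for any barycentered convex body). Applying it with $d:=\mathrm{codim}(E)\le\alpha n/\log n$ yields $L_{K'}\le C(\alpha)\max_j L(\dim E_j)=C(\alpha)\,L(1)$; since $L(1)$ is a numerical constant and $L_K=L_{K'}$, we conclude $L_K\le C'(\alpha)$.

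The heart of the argument is the first step: the observation that an oblique hyperplane symmetry of a convex body becomes a genuine orthogonal reflection once the body is put in isotropic position. This is also where the hypothesis of hyperplane (rather than more general) symmetries is used, so that the spaces $E_j$ are lines and only $L(1)$ enters the final bound. The remaining points are routine bookkeeping: one must only verify that the change of variables $A^{-1/2}$ does not alter the dimension of the common fixed subspace, and that the output of Lemma~\ref{lem:dec-reflex} feeds into Theorem~\ref{th:LK} with all relevant dimensions equal to one.
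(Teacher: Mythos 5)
Your proof is correct and takes essentially the same route as the paper: pass to isotropic position (the affine conjugation that turns the oblique hyperplane symmetries into orthogonal reflections), invoke Lemma~\ref{lem:dec-reflex} for the decomposition of the identity along one-dimensional spaces $E_j$, and conclude by Theorem~\ref{th:LK}. The paper phrases the first step abstractly as ``any compact subgroup of the linear group is affinely conjugated to a subgroup of $\O_n$, and $L_K$ is an affine invariant'', whereas you make it concrete by conjugating with $A^{-1/2}$ and verifying the resulting maps are orthogonal involutions with hyperplanes of fixed points; that computation is correct. You also explicitly note that Theorem~\ref{th:LK}, though stated for origin-symmetric bodies, applies here because its proof only needs the KLS inclusion $L_{K'}|K'|^{1/n}B_2^n\subset K'$, valid for barycentered bodies --- a point the paper leaves implicit (the corollary does not assume origin symmetry, only barycenter at the origin), so this is a worthwhile clarification rather than a deviation.
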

\begin{proof}
%Assume as we may that the barycenter of $K$ is at the origin.
  Since any compact subgroup of the linear group is affinely conjugated to a 
subgroup of the orthogonal group, and since the isotropy constant is an affine invariant,
we may assume that the $S_i$ are orthogonal hyperplane symmetries. The reflection group $G$  that
they generate satisfies $\mathrm{Fix}(G) \subset \bigcap_{i\le m} H_i$. Hence, by Lemma~\ref{lem:dec-reflex}, there exists unit vectors $v_1,\ldots,
v_\ell$ and coefficients $c_1,\ldots, c_\ell$ such that $S_{v_i^\bot}K=K$ and 
$$ \sum_{i\le \ell} c_i P_{\R v_i}=P_{\mathrm{Fix}(G)^\perp}.$$
Hence we may apply Theorem~\ref{th:LK}.
\end{proof}

%%%%%%%%%%%%%%%%%%%%%%%%%%%%%%%%%%%%%%%%%%%%%%%%%%%%%%%%%%%

%%%%%%%%%%%%%%%%%%%%%%%%%%%%%%%%%%%%%%%%%%%%%%%%%%%%%%%%%%%

\appendix
\section*{Appendix: Some observations concerning reflections}

A reflection group is a  subgroup of some $\mathcal O_n$ generated by reflections (i.e. by hyperplane symmetries). 
%In our setting, such reflections leave invariant some measure 
%$\mu$ on $\R^n$ or some convex body $K\subset \R^n$.
% , and groups arise as $\mathcal R(\mu)$ or $\mathcal R (K)$, the group generated by the reflections leaving invariant some measure $\mu$ on $\R^n$ or some convex body $K\subset \R^n$.

\begin{lemma}\label{lem:reflection}
   Let $G$ be a finite irreducible reflection group acting on $\R^n$.
 Let $V=\{v\in S^{n-1};\; S_{v^\bot} \in G\}$. Then, using the tensor notation 
 $(v\otimes v)(x)=\langle x,v\rangle v$
for projections on lines,  we have
 $$ \sum_{v\in V} v\otimes v= \frac{\mathrm{card}(V)}{n} \mathrm{Id}.$$
\end{lemma}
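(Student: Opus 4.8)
The plan is to exploit the $G$-invariance of the symmetric operator $M:=\sum_{v\in V} v\otimes v$ together with the irreducibility of the $G$-action. The first thing I would record is that $G$ permutes the set $V$. For any isometry $g$ and any unit vector $v$ one has $g\,S_{v^\perp}\,g^{-1}=S_{(gv)^\perp}$ — conjugating a reflection by an isometry produces the reflection across the rotated hyperplane. Hence if $v\in V$, i.e.\ $S_{v^\perp}\in G$, then for every $g\in G$ also $S_{(gv)^\perp}=g\,S_{v^\perp}\,g^{-1}\in G$, so $gv\in V$; being a bijection of $S^{n-1}$ that maps $V$ into $V$, $g$ permutes $V$.

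Next I would compute, for each $g\in G$,
$$g\,M\,g^{-1}=\sum_{v\in V}(gv)\otimes(gv)=\sum_{w\in gV}w\otimes w=\sum_{w\in V}w\otimes w=M,$$
using that $g(x\otimes x)g^{-1}=(gx)\otimes(gx)$ for orthogonal $g$, and in the last step that $g$ permutes $V$. Thus $M$ commutes with every element of $G$. Now I invoke irreducibility: $M$ is symmetric, hence diagonalizable over $\R$ with real eigenvalues, and each eigenspace of $M$ is $G$-invariant since $M$ commutes with $G$. Because $G$ acts irreducibly on $\R^n$, $M$ can have only one eigenspace, namely all of $\R^n$, so $M=\lambda\,\mathrm{Id}$ for some scalar $\lambda$. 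Taking traces and using $|v|=1$,
$$\lambda n=\mathrm{Tr}(M)=\sum_{v\in V}\mathrm{Tr}(v\otimes v)=\sum_{v\in V}|v|^2=\mathrm{card}(V),$$
so $\lambda=\mathrm{card}(V)/n$, which is the claim.

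The one point that deserves care — calling it the "main obstacle" overstates it — is the passage through irreducibility. A priori, for a real irreducible representation Schur's lemma only says the commutant is a division algebra over $\R$ (possibly $\mathbb C$ or $\mathbb H$), so one should not simply quote "the commutant is scalar''. The symmetry of $M$ is exactly what removes this subtlety: a self-adjoint operator commuting with $G$ has $G$-invariant eigenspaces, and irreducibility then forces it to be scalar directly, with no representation-theoretic input beyond the definition of irreducibility. Everything else is bookkeeping.
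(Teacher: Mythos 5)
Your proof is correct and follows essentially the same route as the paper's: show that conjugation by $G$ permutes $V$, deduce that $M$ commutes with $G$, and then use self-adjointness plus irreducibility (via the $G$-invariance of an eigenspace) to conclude $M$ is scalar, with the trace computation fixing $\lambda$. Your side remark about why one does not need the full real Schur's lemma is accurate but not part of the paper's argument; the paper uses exactly the eigenspace device you describe.
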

\begin{proof}
   Note that for $v\in V$ and $R\in G$, it holds $RS_{v^\bot}R^{-1}=S_{(Rv)^\bot}\in G$. Hence $R$
restricted to $V$ is a permutation of $V$. Set $L=\sum_{v\in V}v\otimes v$. The previous  observation
implies  that
$$ RLR^{-1}= RLR^* =\sum_{v\in V} Rv\otimes Rv=L.$$
Hence $L$ is in the center of $G$. Let $\lambda$ be an eigenvalue of the self-adjoint map $L$ and 
$E$ the corresponding eigenspace. By the above commutation, $E$ is globally invariant by all elements
of $G$ and it is not empty. By irreducibility $E=\R^n$ and $L=\lambda \mathrm{Id}$.
\end{proof}

\begin{lemma}\label{lem:dec-reflex}
 Let $\mathcal R\subset \mathcal O_n$ be a closed reflection group. Then there exist $m\in \mathbb N$, 
 unit vectors $v_1,\ldots,v_m$ such that $S_{v_i^\bot}\in \mathcal R$ for all $i\le m$ 
and coefficients $c_1,\ldots,c_m>0$ such that 
$$ \sum_{i=1}^m c_i v_i\otimes v_i= P_{E},$$
where $E=\mathrm{Fix}(\mathcal R)^\bot$.
\end{lemma}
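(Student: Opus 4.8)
The plan is to reduce to the irreducible constituents of the action of $\mathcal R$ on $E := \mathrm{Fix}(\mathcal R)^\perp$, and on each constituent to produce the required projector by averaging over $\mathcal R$ and then applying Carath\'eodory's theorem. Since $\mathcal R$ is a closed subgroup of $\mathcal O_n$ it is compact; it acts orthogonally on $\R^n$ and preserves $E$, so $E$ splits as an orthogonal direct sum $E=E_1\oplus\cdots\oplus E_k$ of $\mathcal R$-invariant subspaces on each of which $\mathcal R$ acts irreducibly (choose a minimal nonzero invariant subspace, pass to its orthogonal complement inside $E$, and iterate; the case $E=\{0\}$ is trivial). Since $P_E=\sum_{j=1}^k P_{E_j}$, it suffices to realise each $P_{E_j}$ as a positive combination of rank-one projections $v\otimes v$ with $v\in S^{n-1}$ and $S_{v^\perp}\in\mathcal R$.

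First I would record that every reflection of $\mathcal R$ has its unit normal inside a single block: if $S_{v^\perp}\in\mathcal R$ and $v=\sum_j v_j$ with $v_j\in E_j$, then applying $S_{v^\perp}$ to $v_j\in E_j$ and using $\mathcal R$-invariance of $E_j$ forces $|v_j|^2\,v\in E_j$, hence $v_j\neq 0\Rightarrow v=v_j\in E_j$; as $v\neq 0$, exactly one $v_j$ is nonzero. Writing $\mathcal V:=\{v\in S^{n-1}:\ S_{v^\perp}\in\mathcal R\}$ (closed, hence compact, since $v\mapsto S_{v^\perp}$ is continuous and $\mathcal R$ is closed) we obtain a partition $\mathcal V=\bigsqcup_j \mathcal V_j$ with $\mathcal V_j:=\mathcal V\cap E_j$, each $\mathcal V_j$ being $\mathcal R$-invariant. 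Next I would check $\mathcal V_j\neq\emptyset$: otherwise every reflection of $\mathcal R$ would have normal orthogonal to $E_j$, hence fix $E_j$ pointwise; since $\mathcal R$ is generated by its reflections this forces $E_j\subset\mathrm{Fix}(\mathcal R)\cap E=\{0\}$, a contradiction.

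Now fix $j$, pick $v_0\in\mathcal V_j$, let $\mathrm{Haar}$ be the normalised Haar measure of $\mathcal R$, and set
\[
T_j:=\int_{\mathcal R}(Rv_0)\otimes(Rv_0)\ d\mathrm{Haar}(R).
\]
This is symmetric positive-semidefinite; since $Rv_0\in\mathcal V_j\subset E_j$ for all $R$, $T_j$ maps $\R^n$ into $E_j$ and annihilates $E_j^\perp$, while invariance of Haar measure gives $RT_jR^{-1}=T_j$, so $T_j|_{E_j}$ commutes with the irreducible orthogonal action of $\mathcal R$ on $E_j$. Being self-adjoint, $T_j|_{E_j}$ is then scalar — its eigenspaces are $\mathcal R$-invariant, hence each is $\{0\}$ or $E_j$ — and taking traces (each $Rv_0$ is a unit vector) yields $T_j=\tfrac1{\dim E_j}P_{E_j}$. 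Consequently $\tfrac1{\dim E_j}P_{E_j}=T_j$ lies in the convex hull of the compact set $\{v\otimes v:\ v\in\mathcal V_j\}$ inside the space of symmetric matrices, so by Carath\'eodory's theorem $T_j=\sum_i\theta_i\, v_i\otimes v_i$ for finitely many $v_i\in\mathcal V_j$ and $\theta_i>0$. Hence $P_{E_j}=\sum_i(\dim(E_j)\,\theta_i)\, v_i\otimes v_i$, and summing over $j$ produces the asserted decomposition $P_E=\sum_i c_i\, v_i\otimes v_i$ with all $c_i>0$ and all $S_{v_i^\perp}\in\mathcal R$.

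All the ingredients (complete reducibility, the block-splitting of reflection normals, the Schur-type averaging, and Carath\'eodory) are elementary; the only genuine point to watch is that a \emph{closed} reflection group may be infinite (for instance $\mathcal O_2$), so one cannot simply sum over \emph{all} reflections of $\mathcal R$ as in Lemma~\ref{lem:reflection} — the detour through an $\mathcal R$-invariant measure and Carath\'eodory is what replaces that step, and it also subsumes the finite case uniformly. (When $\mathcal R$ is finite and already irreducible on $E=\R^n$, one may instead invoke Lemma~\ref{lem:reflection} directly.)
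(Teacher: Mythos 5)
Your proof is correct, and it follows the same high-level strategy as the paper (decompose $E=\mathrm{Fix}(\mathcal R)^\perp$ into $\mathcal R$-irreducible blocks and produce the projector on each block separately), but the execution is genuinely different and, to my mind, cleaner. The paper invokes the structure theory of reflection groups twice: once to assert that $\mathcal R$ factors as a direct product of irreducible reflection groups acting on the blocks, and once (via Lemma~\ref{lem:reflection}) to sum $v\otimes v$ over \emph{all} reflection normals in a block when the group is finite; for infinite $\mathcal R$ it falls back on the remark that the group then acts as the full orthogonal group on some block and ``it is not hard'' to produce a decomposition there. You replace both steps. The block-splitting of reflection normals is derived directly from $\mathcal R$-invariance of the blocks (a short computation with $S_{v^\perp}$), rather than from the factorisation of $\mathcal R$, and the production of $P_{E_j}$ goes through Haar averaging of a single orbit $Rv_0\otimes Rv_0$, a self-adjoint-Schur argument, and Carath\'eodory's theorem to extract a finite positive combination. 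This treats finite and infinite (compact) $\mathcal R$ uniformly, makes the ``$c_i>0$, finitely many terms'' conclusion explicit in the infinite case where the paper is brief, and only uses complete reducibility of compact orthogonal actions plus elementary convexity — no classification of reflection groups is needed. What the paper's route buys in exchange is brevity and a closer tie to Lemma~\ref{lem:reflection}, which it also uses elsewhere. Both proofs are sound.

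Two small points worth keeping in mind when polishing. First, you should note (as you implicitly do) that any reflection normal $v$ with $S_{v^\perp}\in\mathcal R$ automatically lies in $E$, since $\mathrm{Fix}(\mathcal R)\subset\mathrm{Fix}(S_{v^\perp})=v^\perp$; this is what makes the expansion $v=\sum_j v_j$ complete. Second, the self-adjoint form of Schur's lemma you use (eigenspaces are invariant, hence trivial or everything) is exactly the right way to sidestep the fact that over $\R$ the commutant of an irreducible representation need not be $\R$; good that you spelled it out.
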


\begin{proof}
  Classically $\R^n$ can we written as an orthogonal sum of $\mathrm{Fix}(\mathcal R)$ and
of  spaces $E_1,\ldots,E_\ell$ irreducible for the action of $\mathcal R$. Also $\mathcal R$
can be written as a direct product of reflection groups acting, in an irreducible manner, 
 on the $E_i's$. Applying the previous
lemma gives a decomposition of $\mathrm{Id}_{E_i}$, summing them up yields the claimed decomposition
of the identity on $\mathrm{Fix}(\mathcal R)^\bot.$ Actually this argument works when the reflection 
group is finite. However if it is infinite, it can be checked that the group acts on some $E_i$ as 
the whole orthogonal group, and it is not hard to find a decomposition of the identity on $E_i$ since
all unit vectors are allowed.
\end{proof}

In view of the previous lemma, a natural and convenient 
invariance hypothesis to work with is the following: there exists isometries $(U_i)_{i=1}^m$ such that
 $U_iK=K$ and
positive coefficients $(c_i)_{i=1}^m$ such that
\begin{equation}\label{eq:decomp}
\sum_{i=1}^m c_i P_{\mathrm{Fix}(U_i)^\bot} = P_E.
\end{equation} 
This implies that for all $x\in\R^n$, $|P_Ex|^2=\sum c_i |P_{\mathrm{Fix}(U_i)^\bot}x|^2$.
Hence  $\cap_i \mathrm{Fix}(U_i)=E^\bot$. Usually $E$ will be a large space, meaning that the
isometries actually operate on a large part of the space.
Let us provide concrete exemples of invariance hypotheses.

\smallskip
 Unconditional convex bodies have 
attracted a lot of attention. They are invariant by changes of signs of coordinates, or equivalently
by reflection with respect to the coordinate hyperplanes of an orthonormal base $(e_1,\ldots,e_n)$.
In this particular case, \eqref{eq:decomp} boils down to $\sum e_i\otimes e_i=\mathrm{Id}$.

A natural extension is to consider sets $K$ in $\R^{kd}$ which are unconditional by blocks (of size $d$):
$(x_1,\ldots,x_k)\in K \Longrightarrow (\pm x_1,\ldots,\pm x_k)\in K$.
The isometries defined by $R_i:(x_1,\ldots,x_k)\mapsto (x_1,\ldots,x_{i-1},-x_i,x_{i+1},\ldots,x_k)$ satisfy that
$\mathrm{Fix}(R_i)=\{x=(x_1,\ldots,x_k)\in \R^{kd};\; x_i=0\}$ and it is plain that 
$\sum_i P_{\mathrm{Fix}(R_i)^\bot} =\mathrm{Id}.$

This pattern naturally occurs
when considering matricial norms which only depend on the absolute values of matrices.
Let us consider a norm on $M_n(\R)$ of the form $\| A\|=f(A^*A)$ (Schatten norms 
$\|A\|_p=\big(\mathrm{Tr}( (A^*A)^{p/2})\big)^{1/p}$, $p\ge 1$ are the simplest examples).
Then if $$E_i:=\mathrm{Diag}(1,\ldots,1,\underbrace{-1}_{i},1,\ldots,1),$$
 then the maps
$R_i:A\mapsto E_iA$ are isometries of $(M_n(\R),\|\cdot\|_2)$ since $(E_iA)^*E_iA=A^*A$.
Note that $R_iA$ is obtained from $A$ by changing the signs of all the entries of the $i$th 
row of $A$. Consequently the unit ball of any  Schatten norm is unconditional by blocks of size
$n$. Note that 
$\mathrm{Fix}(R_i)=\{A\in M_n(\R); a_{i,j}=0,\, \forall j\le n\}$ 
has codimension $n$  while  the ambient space is of dimension $n^2$. Plainly $\sum_i P_{\mathrm{Fix}(R_i)^ \bot}=\mathrm{Id}$.

\medskip
Thanks to Coxeter's classification of irreducible finite reflection groups, one may 
obtain many concrete examples of invariances. Among them, let us emphasize
the group of isometries of a regular simplex $\Delta_n\subset \R^n$ denoted $\mathcal O(\Delta_n)$.
By restriction to the vertices $\{u_1,\ldots,u_{n+1}\}$ of $\Delta_n$ it is identified
with the set of permutation of these vertices. The group $\mathcal O(\Delta_n)$ contains exactly 
$n(n+1)/2$ reflections: namely the $S_{(u_i-u_j)^\bot}$, for $1\le i<j\le n+1$. They correspond to 
transpositions.

Another natural invariance hypothesis is related to the regular simplex: namely the exchangeability
condition. A set or a measure is exchangeable if it is invariant by permutations of coordinates.
Here one considers the isometries of $\R^n$  given by 
 $$ R_{\sigma}:(x_1,\ldots,x_n)\mapsto (x_{\sigma(1)}, \ldots, x_{\sigma(n)}),$$
where $\sigma\in \mathcal S_n$. The group $G=\{R_\sigma;\; \sigma\in \mathcal S_n\}$ has a line of fixed
points: $\mathrm{Fix}(G)=\R v_n$, where $v_n=(1,\ldots,1)$. It is a reflection group generated by the images of
transpositions $R_{\tau_{i,j}}$. On $v_n^\bot$, $G$ acts as $\mathcal O(\Delta_{n-1})$.
Indeed, it permutes the points $(e_i-v_n/n)_{i=1}^n$ which form a regular simplex of $v_n^\bot$.
 Lemma~\ref{lem:dec-reflex} thus provides a decomposition of the form \eqref{eq:decomp}
  $$\frac2n \sum_{i<j} \frac{e_i-e_j}{2}\otimes\frac{e_i-e_j}{2}=P_{v_n^ \bot}.$$ 
One could in the same way introduce a block-exchangeability condition and derive a corresponding decomposition.

\medskip
We can also consider invariances involving only direct isometries. For instance, the next statement encompasses the set $SO(\Delta_n)$
of direct isometries of the simplex.
\begin{lemma}\label{lem:dec-direct}
Let $G$ be a finite reflection group  acting on $\R^n$, $n\ge 2$. Set $E=\mathrm{Fix}(G)^\bot$. Then there exists $m$, rotations $U_1,\ldots, U_m\in G\cap SO(n)$ and coefficients $c_1,\ldots,c_m\ge 0$ 
such that
$$
\sum_{i=1}^m c_i P_{\mathrm{Fix}(U_i)^\bot} = P_F,
$$
where $F$ is $E$ or a hyperplane of $E$ (the latter occurs when $G$ has an odd number of 
one-dimensional irreducible factors). Note that by definition $\mathrm{dim}(\mathrm{Fix}(U_i)^\bot)=2$.
\end{lemma}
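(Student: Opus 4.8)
\emph{Proof sketch.} The plan is to reduce everything to the irreducible factors of $G$, handle the factors of rank at least $2$ by a Schur-type averaging, and pair up the one-dimensional factors by hand; the parity of the number of one-dimensional factors is exactly what forces the exceptional case $F\neq E$. First I would invoke the structure theorem for finite reflection groups (already used in the proof of Lemma~\ref{lem:dec-reflex}): there is an orthogonal decomposition $\R^n=\mathrm{Fix}(G)\oplus E_1\oplus\cdots\oplus E_\ell$ and an internal direct product $G=G_1\times\cdots\times G_\ell$ in which each $G_j$ acts on $E_j$ as a nontrivial irreducible reflection group and acts trivially on the remaining summands and on $\mathrm{Fix}(G)$. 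Then $E=\mathrm{Fix}(G)^\perp=E_1\oplus\cdots\oplus E_\ell$, so $P_E=\sum_{j=1}^\ell P_{E_j}$, and it suffices to realize each $P_{E_j}$ --- or all of them but one line, when the number of one-dimensional factors is odd --- as a nonnegative combination of rank-two projections $P_{\mathrm{Fix}(U)^\perp}$ with $U\in G\cap SO(n)$.

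Fix a factor with $d_j:=\dim E_j\ge 2$. Since $G_j$ is a nontrivial irreducible reflection group on $E_j$, the mirror normals of its reflections span $E_j$, so I can pick two reflections $S_{v^\perp},S_{w^\perp}\in G_j$ with $v,w\in E_j$ not parallel. Their product $U_0:=S_{v^\perp}S_{w^\perp}\in G_j$ has determinant $1$, equals the identity on $\mathrm{span}(v,w)^\perp$, and restricts to a nontrivial planar rotation on $\mathrm{span}(v,w)$; hence $U_0\in G\cap SO(n)$ and $\mathrm{Fix}(U_0)^\perp=\mathrm{span}(v,w)$ has dimension $2$. Now set $L_j:=\sum_{g\in G_j} g\,P_{\mathrm{Fix}(U_0)^\perp}\,g^{-1}$: each summand equals $P_{\mathrm{Fix}(gU_0g^{-1})^\perp}$ with $gU_0g^{-1}\in G\cap SO(n)$ of the same type, and $L_j$ commutes with every element of $G_j$. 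By irreducibility of the $G_j$-action on $E_j$, the eigenspace argument used in Lemma~\ref{lem:reflection} forces $L_j=\lambda_j\,\mathrm{Id}_{E_j}$, and taking traces gives $\lambda_j=2|G_j|/d_j>0$. Thus $P_{E_j}=\lambda_j^{-1}\sum_{g\in G_j}P_{\mathrm{Fix}(gU_0g^{-1})^\perp}$, which is of the required form.

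For a one-dimensional factor $E_j=\R a_j$ (so that $S_{a_j^\perp}\in G$) there is no nontrivial rotation available, so these factors must be paired. Since distinct irreducible factors are mutually orthogonal, the unit vectors spanning the one-dimensional factors are mutually orthogonal; if there is an even number of them I group them in pairs, and for a pair $(a,b)$ the element $U:=S_{a^\perp}S_{b^\perp}\in G$ equals $-\mathrm{Id}$ on $\mathrm{span}(a,b)$ and the identity on its orthogonal complement, so $U\in G\cap SO(n)$, $\mathrm{Fix}(U)^\perp=\mathrm{span}(a,b)$ has dimension $2$, and $P_{\mathrm{Fix}(U)^\perp}=P_{\R a}+P_{\R b}$; summing over the pairs recovers $\sum_{j:\,d_j=1}P_{E_j}$. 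If the number of one-dimensional factors is odd, I pair all but one of them and drop the last, losing exactly the projection onto that line --- this is where $F$ becomes a hyperplane of $E$ rather than $E$ itself. Adding the contributions of all factors yields $\sum_i c_iP_{\mathrm{Fix}(U_i)^\perp}=P_F$ with $c_i>0$, $U_i\in G\cap SO(n)$ and $\dim\mathrm{Fix}(U_i)^\perp=2$, as claimed.

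The only genuine obstacle is the one-dimensional factors: they carry no rotation, which both forces the pairing construction and accounts for the loss of a hyperplane when their number is odd. Everything else is the standard structure theory of finite reflection groups together with the averaging/Schur argument already used for Lemma~\ref{lem:reflection}.
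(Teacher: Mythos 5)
Your proof is correct and follows essentially the same route as the paper: decompose into irreducible factors, on each factor of dimension $\ge 2$ take a rotation that is a product of two reflections and average its conjugates to get a multiple of the identity by Schur, and pair up the one-dimensional factors via the rotation by $\pi$, dropping one line when their number is odd. The only additions are cosmetic — you make the group factorization $G=G_1\times\cdots\times G_\ell$ explicit from the start and compute $\lambda_j=2|G_j|/d_j$ by a trace, but the idea is identical.
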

\begin{proof}
First assume that $G$ is irreducible and acts on $\R^n$ with $n\ge 2$.
Since it is generated by reflections, the direct isometries in $G$ are generated by products of two
reflections, that is rotations (since $n\ge 2$ there are at least two distinct reflections).
Let $R\in G\cap SO(n)$ be such a rotation and  consider the plane $\Pi=\mathrm{Fix}(R)^\bot$.
For all $U\in G$, $URU^{-1}\in G\cap SO(n)$ and $\mathrm{Fix}(URU^{-1})=U \mathrm{Fix}(R)^\bot=U\Pi$.
Also note that for every subspace $\Sigma$, $UP_\Sigma U^{-1}=P_{U\Sigma}$.
Consider $$L=\sum_{U\in G} P_{\mathrm{Fix}(URU^{-1})^\bot}=\sum_{U\in G} P_{U\Pi}.$$
From the above remarks and the group property, for all $V\in G$, $VLV^{-1}=L$. So $L$ commutes
with all the elements of $G$. Since $L$ is a symmetric positive map, it has at least a non-zero eigenvalue
$\lambda$. By the commutation, the eigenspace $E_\lambda$ is stable by $G$ and thus by irreducibility
it is the whole space. Hence $L=\lambda \mathrm{Id}$. This proves the claim for an irreducible group $G$.
 
 For a general group, we consider the induced irreducible decomposition. On components of dimension at least
 $2$ we apply the above argument. We group the one-dimensional components by two. On each such plane the 
 the rotation of angle $\pi$ is in the group, as the product of minus identity on each irreducible line.
 The decomposition of the identity of this plane is obvious (the origin is the only fixed point of the rotation).
 Summing up all these decompositions, we obtain the claim. Note that when there is an odd number of one dimensional
 irreducible components, one of them is left aside. 
\end{proof}

Finally, we recall a useful lemma, which uses more of the explicit description of finite reflection groups, see \cite{BF}.
\begin{lemma}\label{lem:coxeter}
Let $G$ be a reflection group on $\R^n$. Assume that the set of its reflections is closed. If $\mathrm {Fix}(G)=\{0\}$
then there exists an orthogonal decomposition $\R^n=F_1\bigoplus \cdots \bigoplus F_\ell$ and
polytopes $P_i\subset F_i$ for all $i\le \ell$ such that 
 $P_i$ is a (full dimension) regular simplex in $F_i$, or a regular polygon if $F_i$ is of dimension 2, such 
 that 
 $$ \mathcal O(P_1)\times \cdots\times \mathcal O(P_\ell)\subset G.$$ 
\end{lemma}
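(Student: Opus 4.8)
The plan is to reduce the statement to the irreducible constituents of $G$, and then to treat each constituent separately, subdividing further its ambient subspace whenever that constituent is not itself the symmetry group of a simplex or of a polygon.

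First I would invoke the classical structure theory of reflection groups to write $\R^n$ as an orthogonal sum $\R^n=\fix(G)\oplus\mathcal E_1\oplus\cdots\oplus\mathcal E_p$, where $G$ acts on each $\mathcal E_j$ through an irreducible reflection group $G_j$ and $G=G_1\times\cdots\times G_p$; moreover, since the set of reflections of $G$ is closed, each $G_j$ is either finite or acts on $\mathcal E_j$ as the full orthogonal group $\mathcal O(\mathcal E_j)$ (this is the dichotomy already used in the proof of Lemma~\ref{lem:dec-reflex}). The hypothesis $\fix(G)=\{0\}$ then gives $\R^n=\mathcal E_1\oplus\cdots\oplus\mathcal E_p$. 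Consequently it suffices, for each $j$, to exhibit an orthogonal decomposition of $\mathcal E_j$ into subspaces $F$ carrying regular simplices (or, in dimension $2$, regular polygons) $P\subset F$ such that the corresponding product of symmetry groups, each extended by the identity on the orthogonal complement, lies inside $G_j$; concatenating over $j$ yields the conclusion, the required inclusion being simply the product of the inclusions $\prod\mathcal O(P)\subset G_j$.

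Next I would dispose of the easy constituents. If $G_j=\mathcal O(\mathcal E_j)$, it contains $\mathcal O(P)$ for any regular simplex (or polygon, when $\dim\mathcal E_j=2$) $P\subset\mathcal E_j$. If $G_j$ is finite, Coxeter's classification applies: for $\dim\mathcal E_j=1$ one has $G_j=\{\pm\mathrm{Id}\}=\mathcal O(\text{segment})$; for $\dim\mathcal E_j=2$, $G_j$ is dihedral of order $2k$, i.e. $\mathcal O(\text{regular }k\text{-gon})$; and if $G_j$ is of type $A_r$ it is exactly the symmetry group of a regular $r$-simplex in $\mathcal E_j$, so one takes $F=\mathcal E_j$. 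There remains the case $\dim\mathcal E_j\ge 3$ with $G_j$ of type $B_r$, $C_r$, $D_r$, $E_6$, $E_7$, $E_8$, $F_4$, $H_3$ or $H_4$. For these I would appeal to the explicit description of the root systems (this is where \cite{BF} is used): in each of them one locates a sub-root-system $\Psi$ that is an orthogonal direct sum of pieces of type $A_m$ ($m\ge1$) or $I_2(k)$ and whose span is all of $\mathcal E_j$; since the reflections in the directions of $\Psi$ belong to $G_j$, the product of the symmetry groups of the simplices/polygons attached to the irreducible pieces of $\Psi$ is a subgroup of $G_j$, and the spans of those pieces provide the desired orthogonal decomposition of $\mathcal E_j$. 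Concretely: for $B_r$ and $C_r$ the roots $\pm e_1,\dots,\pm e_r$ form an orthogonal copy of $A_1^{\,r}$; for $D_r$ one pairs coordinates into blocks $\{e_{2i-1}\mp e_{2i}\}$, using one $D_3=A_3$ block when $r$ is odd; $F_4$ and $H_4$ contain a copy of $D_4$, hence of $A_1^{\,4}$; $H_3$ contains three mutually orthogonal roots; and $E_6$, $E_7$, $E_8$ contain orthogonal sub-systems of types $A_5\oplus A_1$, $A_7$ and $A_8$ respectively.

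The main obstacle is precisely this last step. There is no uniform argument: one must rely on Coxeter's classification together with a case-by-case inspection of the exceptional root systems to certify that each of them contains an orthogonal sub-root-system all of whose irreducible components are of simplex type $A_m$ or polygon type $I_2(k)$ and which spans the whole reflection space. Once this verification is carried out, the assembly of the decomposition of $\R^n$ and the inclusion $\prod\mathcal O(P_i)\subset G$ follow at once from the product structure.
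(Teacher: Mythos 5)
The paper does not actually supply a proof of this lemma: it is stated as a recalled fact, with the proof deferred to the reference \cite{BF}. So there is no internal proof to compare against; what can be assessed is whether your reconstruction is a valid proof of the stated fact, and it is.

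Your strategy --- split $G$ into its irreducible constituents $G_1\times\cdots\times G_p$ acting on mutually orthogonal subspaces $\mathcal E_j$ (using $\fix(G)=\{0\}$ to discard the trivial factor and the closedness hypothesis to force each infinite irreducible factor to be a full $\mathcal O(\mathcal E_j)$, as in the proof of Lemma~\ref{lem:dec-reflex}), then treat each irreducible factor via Coxeter's classification --- is exactly the kind of case-by-case argument that establishes this result. Your specific reductions are all correct: $B_r,C_r\supset A_1^{\,r}$ via $\pm e_i$; $D_r$ splits into $A_1\times A_1$ blocks on coordinate pairs with one $D_3=A_3$ block absorbing the odd coordinate when $r$ is odd; $F_4\supset D_4$ (long roots), $H_4\supset D_4$ (the eight $\pm e_i$ and the sixteen $(\pm\tfrac12,\pm\tfrac12,\pm\tfrac12,\pm\tfrac12)$ sit among the $120$ roots of $H_4$), $H_3\supset A_1^3$ (three mutually orthogonal $2$-fold axes of the icosahedron); and $E_6\supset A_5\oplus A_1$, $E_7\supset A_7$, $E_8\supset A_8$ are the standard maximal-rank subsystems from the Borel--de Siebenthal procedure. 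In each case the chosen subsystem spans the ambient irreducible component, its direct summands are of simplex ($A_m$) or polygon ($I_2(k)$) type, and the associated product of symmetry groups lies in $G_j$. Concatenating over $j$ gives the claimed decomposition. One minor remark: for $B_r$ and $F_4$ you can skip the detour through $D_4$ and directly take the short roots $\pm e_i$, but this does not affect correctness.

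Since the paper only cites \cite{BF}, I cannot certify that your decomposition coincides term-by-term with the one used there, but the overall method --- reduce to irreducibles, then inspect the classification --- is unavoidable for a statement of this kind, and your sketch is complete and correct.
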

 If $E_i$
is a line, then by convention $P_i$ is  a symmetric interval. Note that $\mathrm{Fix}(\mathcal O(P_1)\times \cdots\times \mathcal O(P_\ell))=\{0\}$. The interest of this result is to provide a simple reflection subgroup
of $G$ with no nontrivial fixed points.

\bigskip

\noindent {\bf Acknowledgments:} We would like to thank Pietro Caputo, Matthieu Fradelizi, Michael Loss, Felix Otto and Alain Pajor for useful discussions. We also gratefully
acknowledge the hospitality of the Newton Institute of Mathematical Sciences, Cambridge, where part of this work was done.

%%%%% BIBLIO  %%%%%%

%\bibliographystyle{amsplain}
%\bibliography{bibli}

%%%%%%%%%%%%%%%%
\vskip1cm

\noindent
F. Barthe: Institut de Math\'ematiques de Toulouse,  Universit\'e Paul Sabatier, 31062 Toulouse cedex 09, France.

\noindent
Email: barthe@math.univ-toulouse.fr

\bigskip

\noindent
D. Cordero-Erausquin: Institut de Math\'ematiques de Jussieu, Universit\'e Pierre et Marie Curie (Paris 6),
75252 Paris Cedex 05, France
 
\noindent
Email: cordero@math.jussieu. fr

\end{document}